\title{Gluing metrics with prescribed $Q$-curvature and different asymptotic behaviour in high dimension}
\author{Ali Hyder\thanks{The authors are supported by the Swiss National Science Foundation projects n. PP00P2-144669, PP00P2-170588/1 and P2BSP2-172064.} \\ {\small UBC Vancouver}\\ {\small \texttt{ali.hyder@math.ubc.ca} }\and Luca Martinazzi${}^*$ \\ {\small Universit\`a di Padova}\\ {\small \texttt{luca.martinazzi@math.unipd.it} }}
\newtheorem{thm}{Theorem}[section]
\newtheorem{thmx}{Theorem}[section]
\newtheorem{lem}[thm]{Lemma}
\newtheorem{prop}[thm]{Proposition}
\newtheorem{cor}[thm]{Corollary}
\newtheorem{defn}{Definition}[section]
\newtheorem{rem}{Remark}
\newenvironment{proof}{\noindent\textbf{Proof}}{\hfill$\square$\medskip}
\newcommand{\D}{\Delta}
\newcommand{\vp}{\varphi}
\newcommand{\R}{\mathbb{R}}
\newcommand{\ve}{\varepsilon}
\newcommand{\bra}[1]{\left(#1\right)}
\newcommand{\C}{\mathcal{C}}
\DeclareMathOperator{\loc}{loc}
\begin{document}

\maketitle

\begin{abstract} We show a new example of blow-up behaviour for the prescribed $Q$-curvature equation in even dimension $6$ and higher, namely given a sequence $(V_k)\subset C^0(\R^{2n})$ suitably converging we construct {for $n\ge 3$} a sequence $(u_k)$ of radially symmetric solutions to the equation
$${(-\Delta)^n u_k=V_k e^{2n u_k} \quad \text{in }\R^{2n},}$$
with $u_k$ blowing up at the origin \emph{and} on a sphere. We also prove sharp blow-up estimates. This is in sharp contrast with the $4$-dimensional case studied by F. Robert (J. Diff. Eq. 2006).

\medskip

\noindent\textbf{MSC:} 35J92, 53A30.
\end{abstract}

 \section{Introduction to the problem}

Given a domain $\Omega\subset\R^{2n}$, we will consider sequence $(u_k)$ of solutions to the prescribed $Q$-curvature equation
\begin{align}\label{eq-1}
   (-\D)^nu_k= V_ke^{2nu_k}\quad \text{in }\Omega,
\end{align}
under the uniform (volume) bound
\begin{align}\label{eq-2}
\int_{\Omega}e^{2nu_k}dx\leq C,\quad k=1,2,3,\dots
\end{align}
and suitable bounds on $V_k\in L^\infty(\Omega)$.

Contrary to the two dimensional situation studied by Br\'ezis-Merle \cite{BM}, or the case of a compact manifold of dimension {$2n$} without boundary (see e.g. \cite{DR,Mal,Mar1,Ndi}), where blow up occurs only on a finite set $S_1$, in an open Euclidean  domain $\Omega$ of dimension $4$ or higher it is possible to have blow up on larger sets. More precisely, for a finite set $S_1\subset\Omega\subset\R^{2n}$ let us introduce
\begin{equation}\label{defK}
\mathcal{K}(\Omega, S_1):=\{\varphi\in C^\infty(\Omega \setminus S_1):\varphi\le 0,\,\varphi\not\equiv 0,\, \Delta^n \varphi\equiv 0\},
\end{equation}
and for a function $\varphi \in \mathcal{K}(\Omega,S_1)$ set
\begin{equation}\label{defS0}
S_\varphi:=\{x\in\Omega\setminus S_1: \varphi(x)=0\}.
\end{equation}

\begin{thmx}[Adimurthi-Robert-Struwe \cite{ARS}, Martinazzi \cite{Mar1}]\label{ARSM} 
Let $\Omega$ be a domain in $\R^{2n}$, $n>1$ and let $(u_k)$ be a sequence of solutions to \eqref{eq-1}-\eqref{eq-2}, where $V_k\rightarrow V_0>0$ locally uniformly in $\Omega$ for some $V_0\in C^0(\Omega)$,
 and define the set  (possibly empty) 
$$S_1:=\left\{x\in\Omega:\lim_{r\to0^+}\liminf_{k\to\infty}\int_{B_r(x)}V_ke^{2nu_k}dx\geq \frac{\Lambda_1}{2}\right\},\quad \Lambda_1:=(2n-1)!|S^{2n}|.$$ 
Then up to extracting a subsequence one of the following is true.
\begin{itemize}
 \item [i)] For every $0\leq \alpha<1$, $(u_k)$ converges in $C^{2n-1,\alpha}_{\loc}(\Omega)$.
 \item [ii)] There exists $\vp\in \mathcal{K}(\Omega, S_1)$ 
and a sequence of numbers $\beta_k\to\infty$, such that, setting $S:=S_1\cup S_\varphi$ 
we have 
\begin{equation}\label{convbetak}
\frac{u_k}{\beta_k}\to\vp\quad \text{in } C_{\loc}^{2n-1,\alpha}(\Omega\setminus S),
 \quad 0\leq \alpha<1.
\end{equation}
In particular $u_k\to-\infty$ locally uniformly in $\Omega\setminus S$.
 \end{itemize}
 \end{thmx}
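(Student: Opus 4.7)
The plan is to extract the concentration set $S_1$ of the curvature measures, which is finite by the volume bound, then on $\Omega\setminus S_1$ split $u_k$ into a Green-potential part (regular by a Br\'ezis--Merle-type estimate) and a polyharmonic part (carrying all unbounded behaviour), and finally rescale the latter to extract the profile $\varphi$. The decisive use of the $L^1$ bound \eqref{eq-2} comes at the end, where it forces $\varphi\leq 0$.

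First, $\int_\Omega V_k e^{2nu_k}\,dx\leq C$ by \eqref{eq-2}, so $S_1$ has at most $2C/\Lambda_1$ elements and is finite. Fix $x_0\in\Omega\setminus S_1$ and $r>0$ with $\overline{B_{2r}(x_0)}\subset\Omega\setminus S_1$ and $\int_{B_{2r}(x_0)} V_k e^{2nu_k}\,dx<\Lambda_1/2-\delta$ for all large $k$. A Br\'ezis--Merle estimate for $(-\D)^n$, obtained via Green's representation on $B_{2r}(x_0)$ and Jensen's inequality (the threshold $\Lambda_1=(2n-1)!|S^{2n}|$ being precisely the mass of the standard spherical solution on $\R^{2n}$), gives $\|e^{2nq u_k}\|_{L^1(B_r(x_0))}\leq C$ for some $q=q(\delta)>1$. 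Splitting $u_k=w_k+h_k$ on $B_r(x_0)$ with $w_k$ solving $(-\D)^n w_k = V_k e^{2nu_k}$ under vanishing Navier data, elliptic theory yields $\|w_k\|_{W^{2n,q}(B_r(x_0))}\leq C$ and hence uniform $L^\infty_{\loc}$ bounds on $w_k$ by Sobolev embedding, so that all unbounded behaviour of $u_k$ is carried by the polyharmonic remainder $h_k$.

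Dichotomy: if, up to a subsequence, $u_k$ is uniformly bounded in $L^\infty_{\loc}(\Omega)$, then $V_k e^{2nu_k}$ is bounded in $L^\infty_{\loc}$, Calder\'on--Zygmund estimates give $u_k\in W^{2n,p}_{\loc}$ for every $p<\infty$, and Arzel\`a--Ascoli produces alternative (i). Otherwise, up to a subsequence and a choice of compact $K\subset\Omega\setminus S_1$, $\beta_k:=\|h_k\|_{L^\infty(K)}\to\infty$, and the rescaled sequence $h_k/\beta_k$ consists of polyharmonic functions with $\|h_k/\beta_k\|_{L^\infty(K)}=1$. By interior regularity for $(-\D)^n$ it is precompact in $C^\infty_{\loc}(\Omega\setminus S_1)$, converging along a further subsequence to a polyharmonic limit $\varphi$ with $\|\varphi\|_{L^\infty(K)}=1$; in particular $\varphi\not\equiv 0$.

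The step I expect to be the main obstacle is proving $\varphi\leq 0$ everywhere, which simultaneously rules out upward blow-up away from $S_1$ and pins down the profile. No maximum principle is available for $(-\D)^n$ when $n\geq 2$, so the $L^1$-bound must do the work: if $\varphi(x_1)>0$ at some $x_1\in\Omega\setminus S_1$, then on a small neighbourhood $U\ni x_1$ contained in $\Omega\setminus S_1$ we have $h_k\geq \tfrac12\beta_k\varphi(x_1)$ for large $k$, while $w_k$ is uniformly bounded on $U$, so $u_k\to+\infty$ uniformly on $U$ and $\int_U e^{2nu_k}\,dx\to\infty$, contradicting \eqref{eq-2}. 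Hence $\varphi\leq 0$, so $\varphi\in\mathcal{K}(\Omega,S_1)$. On $\Omega\setminus(S_1\cup S_\varphi)$ we have $\varphi<0$, so $h_k\to-\infty$ and $u_k\to-\infty$ locally uniformly; the equation then forces $V_k e^{2nu_k}\to 0$ in $L^\infty_{\loc}(\Omega\setminus S)$, $w_k$ becomes precompact in $C^{2n-1,\alpha}_{\loc}$ there by elliptic regularity, and $u_k/\beta_k=w_k/\beta_k+h_k/\beta_k\to\varphi$ in $C^{2n-1,\alpha}_{\loc}(\Omega\setminus S)$, giving \eqref{convbetak}.
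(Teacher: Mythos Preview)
This statement is not proven in the present paper; it is quoted from \cite{ARS} and \cite{Mar1} as background, so there is no argument here to compare against. Your outline does follow the strategy of those references: isolate the finite concentration set $S_1$, decompose $u_k$ locally into a Green potential $w_k$ plus a polyharmonic remainder $h_k$, rescale the latter to extract $\varphi$, and use \eqref{eq-2} to force $\varphi\le 0$.

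There is, however, a real gap. You assert that the Br\'ezis--Merle estimate gives $\|e^{2nqu_k}\|_{L^1(B_r(x_0))}\le C$ for some $q>1$ whenever $x_0\notin S_1$, and then conclude $w_k\in W^{2n,q}\hookrightarrow L^\infty_{\loc}$. The Green/Jensen argument only controls the potential part $w_k$, not $u_k$: passing to $u_k$ would require $h_k^+$ bounded, which in dimension $2$ follows from subharmonicity of $e^{2h_k}$ but has no analogue for polyharmonic $h_k$ when $n\ge 2$. Indeed the paper's own constructions (Theorems~\ref{HIM} and \ref{thm-1}) produce sequences with $u_k\to+\infty$ on $S_\varphi\subset\Omega\setminus S_1$, and there $\int e^{2nqu_k}\to\infty$ for every $q>1$; so $w_k$ cannot be placed in $L^\infty_{\loc}$ at this stage, and your later use of ``$w_k$ uniformly bounded on $U$'' in the $\varphi\le 0$ step is unsupported. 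The repair is to work with the weaker information actually available: $w_k\ge 0$ (positivity of the Navier Green function on a ball) and $\int e^{2nqw_k}\le C$. The first, via $u_k\ge h_k$, already yields $\varphi\le 0$ from \eqref{eq-2}; then on $\{\varphi<0\}$ one has $h_k\le -c\beta_k$ locally, hence $e^{2nqu_k}\le e^{2nqw_k}e^{-2nqc\beta_k}\to 0$ in $L^1_{\loc}$, and only at this point does elliptic regularity deliver $w_k$ bounded in $C^{2n-1,\alpha}_{\loc}(\Omega\setminus S)$ and thus \eqref{convbetak}.
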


\begin{defn} Given $u_k$, $S_1$ and $S_\varphi$ as in Theorem \ref{ARSM}, we shall call $S_1$ the \emph{concentration} blow-up set and $S_\varphi$ the \emph{polyharmonic} blow-up set. Similarly $x\in S_1$ is called a concentration blow-up point and $x\in S_\varphi$ a polyharmonic blow-up point.
\end{defn}

Recently the authors together with S. Iula proved a partial converse to Theorem \ref{ARSM}, which we state in a simplified form.

\begin{thmx}[Hyder-Iula-Martinazzi \cite{HIM}]\label{HIM}
Consider $0<\Lambda<\frac{\Lambda_1}{2}$, $\Omega\subset \R^{2n}$ open, $n\ge 2$, $\varphi\in \mathcal{K}(\Omega,\emptyset)$,
and a sequence $(V_k)\subset L^\infty(\Omega)$ with $0<a\leq V_k\leq b<\infty$.
Then there exists a sequence $(u_k)$ of solutions to  \eqref{eq-1} with 
\begin{align*}
\int_{\Omega}V_k e^{2nu_k}dx=\Lambda,
\end{align*}
such that $S_1=\emptyset$, \eqref{convbetak} holds  with $S=S_\varphi$ and $u_k \to +\infty$ locally uniformly on $S_\varphi$.
\end{thmx}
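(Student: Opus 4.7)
The plan is to look for $u_k$ in the form
$$u_k = \beta_k \varphi + \alpha_k + r_k,$$
with parameters $\beta_k, \alpha_k \to \infty$ and a uniformly bounded correction $r_k$. Since $\varphi$ is polyharmonic and $\alpha_k$ is constant, $(-\D)^n u_k = (-\D)^n r_k$, so \eqref{eq-1} reduces to
$$(-\D)^n r_k = W_k\, e^{2n r_k}\quad \text{in } \Omega, \qquad W_k := V_k\, e^{2n(\beta_k \varphi + \alpha_k)},$$
together with the volume constraint $\int_\Omega W_k\, e^{2n r_k}\,dx = \Lambda$. By construction the weight $W_k$ is exponentially small away from $S_\varphi$ and concentrates on $\{\varphi=0\}=S_\varphi$.

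The first task is to balance $\beta_k$ and $\alpha_k$. A Laplace expansion of $\int_\Omega e^{2n\beta_k\varphi}dx$ in a tubular neighborhood of $S_\varphi$, exploiting the non-degeneracy of $\varphi$ in the directions normal to $S_\varphi$, produces an asymptotic of the form $c_\varphi\,\beta_k^{-\gamma}$ with exponent $\gamma>0$ determined by the codimension of $S_\varphi$. Setting $\alpha_k := \frac{\gamma}{2n}\log\beta_k + c$ with a free $O(1)$ constant $c$ then ensures $\int_\Omega W_k\,dx \to \Lambda$ for the correct choice of $c$.

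For each large $\beta_k$ I would next construct $r_k$ variationally, as a minimizer on a suitable Sobolev space (with Navier boundary conditions on a ball containing $\overline{S_\varphi}$) of an Adams-type functional of the schematic form $\tfrac12 \|r\|^2 - \tfrac{1}{2n}\log\int_\Omega W_k\, e^{2n r}\,dx$. The Adams-Moser-Trudinger inequality guarantees coercivity as long as $\Lambda < \Lambda_1$, so a minimizer $r_k$ exists. Its Euler-Lagrange equation coincides with the target equation up to a Lagrange multiplier $\lambda_k>0$, and absorbing $\frac{1}{2n}\log\lambda_k$ into the free $O(1)$ part of $\alpha_k$ enforces $\int_\Omega W_k\, e^{2n r_k}\,dx = \Lambda$ on the nose.

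The hard part will be to show that $(r_k)$ remains uniformly bounded in $C^0_{\loc}(\Omega)$: otherwise a secondary blow-up of $r_k$ would destroy both the limit $u_k/\beta_k \to \varphi$ and the identification $S = S_\varphi$. This is precisely where the strict inequality $\Lambda < \Lambda_1/2$ is essential. Applying Theorem~\ref{ARSM} to $r_k$, concentration blow-up is excluded ($S_1=\emptyset$), and only polyharmonic blow-up $r_k/\gamma_k \to \psi \in \mathcal{K}(\Omega,\emptyset)$ is a priori possible; using the concentration of $W_k$ on $S_\varphi$ together with the prescribed volume, one shows that any such non-trivial $\psi$ would route too much mass away from $S_\varphi$, a contradiction. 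Once $r_k$ is bounded, $u_k/\beta_k \to \varphi$ in $C^{2n-1,\alpha}_{\loc}(\Omega\setminus S_\varphi)$ is immediate, while $u_k|_{S_\varphi} = \alpha_k + r_k \to +\infty$ is automatic from $\alpha_k \to \infty$.
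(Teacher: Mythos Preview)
The present paper does not contain a proof of Theorem~\ref{HIM}. This result is quoted from \cite{HIM} as background (note the \texttt{thmx} environment and the attribution ``Hyder--Iula--Martinazzi''); the authors use it only to motivate the question of whether concentration and polyharmonic blow-up can coexist. Hence there is no ``paper's own proof'' to compare your sketch against.

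That said, a few remarks on your outline. The ansatz $u_k=\beta_k\varphi+\alpha_k+r_k$ and the reduction to a weighted problem for $r_k$ are natural, and the role of the threshold $\Lambda<\Lambda_1/2$ in ruling out a secondary concentration blow-up of $r_k$ is correctly identified. However, two steps are not justified by the hypotheses of the theorem as stated. First, your Laplace expansion assumes that $\varphi$ vanishes non-degenerately along a smooth submanifold $S_\varphi$ of a definite codimension; the definition $\varphi\in\mathcal{K}(\Omega,\emptyset)$ only gives $\varphi\le 0$, $\varphi\not\equiv 0$, $\Delta^n\varphi=0$, so $S_\varphi$ can be highly degenerate (flat contact, positive-measure zero set, etc.), and no single exponent $\gamma$ need exist. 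Second, your variational construction is carried out ``on a ball containing $\overline{S_\varphi}$'' with Navier boundary conditions, but $\Omega$ is an arbitrary open set and $S_\varphi$ need not be compactly contained in it, so the passage from this auxiliary problem back to \eqref{eq-1} on $\Omega$ requires an argument you have not supplied. If you want to see how these issues are actually handled, you should consult \cite{HIM} directly.
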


The main question is whether one can extend Theorem \ref{HIM} to include the case in which concentration and polyharmonic blow-up sets coexist,  i.e. $S_1\ne \emptyset$ and $S_\varphi\ne\emptyset$, which is a possibility left open in Theorem \ref{ARSM}. In this paper we shall address the radially symmetric case of dimension $2n\geq6$. By $C^\ell_{rad}(B_R)$ or $L^p_{rad}(B_R)$ we will denote the subspace of radially symmetric functions in $C^\ell(B_R)$ and $L^p(B_R)$.

\subsection{The blow-up analysis}

Let us first observe that in dimension $4$ the radial case has been completely described by F. Robert \cite{Rob}. 
 
\begin{thmx}[Robert \cite{Rob}]\label{thm-Rob} On $\Omega= B_R\subset\R^4$, $R>0$, let $(V_k)\subset C^0_{rad}(B_R)$ be a sequence converging to $V_0=6$ locally uniformly, and let $(u_k)$ 
 be a sequence of radial solutions to \eqref{eq-1}-\eqref{eq-2} with $n=2$. Then, up to extracting a subsequence, either $u_k$ converges in $C^3_{\loc}(B_R)$, or $u_k\to -\infty$ uniformly locally in $B_R\setminus \{0\}$, $V_k e^{nu_k}\rightharpoonup \kappa\delta_0$, for some $\kappa\in[0,16\pi^2]$ and we have one of the following blow-up behaviours:

\noindent i) $u_k(0)\le C$ for every $k$ and $\kappa=0$.\\
\noindent ii) $u_k(0)\to \infty$. Then if we set $r_k:=2e^{-u_k(0)}$ and $\eta_k(x):=u_k(r_kx)+\log(r_k)$, we have $3$ subcases.

\noindent ii.a) $\kappa=16\pi^2$ and $\eta_k\to\eta$ in $C^3_{\loc}(\R^4)$, where $\eta(x)=\log\left(\frac{2}{1+|x|^2}\right)$,

\noindent ii.b) $\kappa\in (0,16\pi^2)$ and $\eta_k\to \eta_\infty$ in $C^3_{\loc}(\R^4)$, 
where $\eta_\infty(x)= \frac{2\kappa}{\Lambda_1}\log|x|-a|x|^2+o(\log|x|)$ as $|x|\to\infty$ for some $a>0$.\\
\noindent ii.c) $\kappa=0$, $r_k^2\Delta u_k(0)\to -\infty$ and for $\tilde \eta_k:=\frac{\eta_k}{r_k^2\Delta u_k(0)}$ we have $\tilde\eta_k(x)\to \frac{|x|^2}{8}$ in $C^3_{\loc}(\R^4).$

Moreover in the cases ii.a) and ii.b) it holds
\begin{equation}\label{quant-Rob}
\lim_{L\to\infty}\lim_{k\to\infty}\int_{B_\delta\setminus B_{Lr_k}}V_ke^{4u_k}dx=0,\quad \text{for every }\delta<R.
\end{equation}
\end{thmx}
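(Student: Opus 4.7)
The plan is to reduce to Theorem~\ref{ARSM}, exploit the radial structure to confine both blow-up sets to the origin, and then analyze the behaviour at $0$ through a rescaling whose limit is identified via the classification of entire solutions of $(-\Delta)^2 u = 6\,e^{4u}$ on $\R^4$.

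First I would invoke Theorem~\ref{ARSM}: either $u_k$ converges in $C^3_{\loc}(B_R)$, or there is blow-up on $S=S_1\cup S_\vp$. Radiality of $V_k$ and $u_k$ forces $S_1\subseteq\{0\}$. For $S_\vp$, every non-positive radial biharmonic function on $B_R\setminus\{0\}$ is a linear combination of $1$, $r^2$, $\log r$, $r^{-2}$, and a direct ODE analysis shows that any such $\vp\le 0$ with $\vp\not\equiv 0$ vanishes only at $r=0$, so $S_\vp\subseteq\{0\}$. Hence $u_k \to -\infty$ locally uniformly in $B_R\setminus\{0\}$ and, by weak-$*$ compactness of bounded measures, $V_k e^{4u_k}\rightharpoonup \kappa\,\delta_0$ for some $\kappa\ge 0$.

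In case i, $u_k(0)\le C$. I would argue $\kappa=0$ via radial monotonicity of $w_k:=-\Delta u_k$, which satisfies $r^{3} w_k'(r) = -|S^3|^{-1}\int_{B_r}V_k e^{4u_k}\,dy \le 0$; in the generic subcase $w_k(0)\ge 0$ this yields $u_k$ radially decreasing near $0$, hence $u_k\le u_k(0)\le C$ on a small ball, so $V_k e^{4u_k}$ is uniformly bounded and $\kappa=0$ by dominated convergence. The complementary subcase is absorbed into ii.c via the rescaling below. In case ii, $u_k(0)\to\infty$, I would rescale by $r_k=2e^{-u_k(0)}$ to $\eta_k(x)=u_k(r_k x)+\log r_k$, obtaining $(-\Delta)^2\eta_k=V_k(r_k\,\cdot)\,e^{4\eta_k}$, $\eta_k(0)=\log 2$, and bounded volume. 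Standard elliptic regularity gives $C^{3,\alpha}_{\loc}$-compactness of $\eta_k$ to an entire solution $\eta_\infty$ of $(-\Delta)^2\eta_\infty=6e^{4\eta_\infty}$ on $\R^4$ with total mass $\tilde\kappa\in[0,16\pi^2]$.

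The classification of such entire solutions (Lin for the spherical case, Chang--Chen and Martinazzi for the finite-volume non-spherical case) then yields the three subcases: $\tilde\kappa=16\pi^2$ produces the standard bubble of ii.a; $\tilde\kappa\in(0,16\pi^2)$ produces the logarithmic-plus-quadratic profile of ii.b; and $\tilde\kappa=0$ forces $\eta_k\to-\infty$, in which case a secondary renormalization by $r_k^2\Delta u_k(0)\to-\infty$ captures the only admissible radial polyharmonic limit, namely $\tfrac{|x|^2}{8}$ (ii.c). Finally, \eqref{quant-Rob} in cases ii.a and ii.b follows from a neck analysis on $B_\delta\setminus B_{Lr_k}$: by the classification all of the mass $\tilde\kappa$ of $\eta_\infty$ is captured inside $B_L$ as $L\to\infty$ in the rescaled picture, while $u_k\to-\infty$ off the origin makes the outer contribution vanish. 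The main obstacle is the classification step: in dimension $4$ (unlike dimension $2$) non-spherical finite-volume entire solutions exist (ii.b), so ruling out intermediate profiles and pinning down the $\frac{2\tilde\kappa}{\Lambda_1}\log|x|-a|x|^2+o(\log|x|)$ asymptotics is a delicate rigidity problem; the degenerate regime ii.c, where the natural rescaling breaks down and a second renormalization is needed, is equally subtle.
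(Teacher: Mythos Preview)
The paper does not prove Theorem~\ref{thm-Rob}; it is quoted from Robert~\cite{Rob}. What the paper \emph{does} record about Robert's argument (the paragraph around \eqref{stimaRob}) is that its crucial ingredient is the pointwise estimate
\[
|x|\,e^{u_k(x)}\le C(\delta)\quad\text{uniformly on }B_\delta,\ \delta<R,
\]
and that this estimate is precisely what \emph{fails} in dimension $6$ and higher. Your proposal tries to bypass \eqref{stimaRob} entirely, and that is where it breaks.

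The concrete gap is the sentence ``a direct ODE analysis shows that any such $\vp\le 0$ with $\vp\not\equiv 0$ vanishes only at $r=0$, so $S_\vp\subseteq\{0\}$.'' This is false. In $\R^4\setminus\{0\}$ the function
\[
\vp(r)=-\bigl(r^{-2}-1+2\log r\bigr)
\]
is radial and biharmonic (since $\Delta r^{-2}=0$ and $\Delta^2\log r=\Delta(2r^{-2})=0$ in $\R^4\setminus\{0\}$), has a strict global minimum of the bracket at $r=1$ with value $0$, and therefore satisfies $\vp\le 0$ on $(0,\infty)$ with equality exactly at $r=1$. Hence $\vp\in\mathcal{K}(B_R,\{0\})$ with $S_\vp=\{|x|=1\}$. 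So when $S_1=\{0\}$, Theorem~\ref{ARSM} by itself does \emph{not} exclude a polyharmonic blow-up sphere; algebra on the four-dimensional biharmonic kernel $\{1,r^2,r^{-2},\log r\}$ cannot do this. One really needs \eqref{stimaRob} (or an equivalent device): once $|x|e^{u_k(x)}\le C$ one has $u_k(\rho)\le C$ on every sphere $\{|x|=\rho\}$, which immediately forbids $S_\vp=\{|x|=\rho\}$ with $\rho>0$. Robert obtains \eqref{stimaRob} by a monotonicity/Green-function argument in the spirit of \cite{RS}; compare Lemma~\ref{derivative-uk}~$i)$ in the present paper for the analogous estimate in dimension~$6$ \emph{inside} the first blow-up sphere. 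The remaining ingredients you list (rescaling to $\eta_k$, classification of entire solutions, neck analysis for \eqref{quant-Rob}) are indeed the right ideas, but the localization of the blow-up set to $\{0\}$ is a genuine analytic step, not an ODE triviality.
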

 

One of the crucial elements in the proof of Theorem \ref{thm-Rob} is the estimate
\begin{equation}\label{stimaRob}
|x|e^{u_k(x)}\le C(\delta),\quad \text{uniformly on }B_\delta\quad \text{for every }\delta<R.
\end{equation}
Unfortunately \eqref{stimaRob} does not hold in dimension $6$ (and higher), not even for small $\delta$, see e.g. Examples 2 and 3 in Section \ref{examples}, and in fact the blow-up behaviour is much richer, but we will not examine it in detail. Instead we will focus on the follwing particular issue.
By scaling one of the many entire solution to
\begin{equation}\label{entiresol}
{(-\Delta)^{2n} u=(2n-1)! e^{2nu}\quad \text{in }\R^{2n},\quad \Lambda:=(2n-1)!\int_{\R^{2n}}e^{2nu}dx<\infty}
\end{equation}
(see \cite{CC}) one finds a sequence $(u_k)$ of solutions to \eqref{eq-1}-\eqref{eq-2} blowing up at $0$. By Theorem \ref{HIM} one can also construct solutions blowing up on a sphere. The main question is whether one can ``glue'' the first kind of solutions to the second kind of solutions to obtain solutions with a concentration blow up at $0$ and a polyharmonic blow up on a sphere. 

In the following theorem, we show that in dimension 6 if this occurs (case $iv$), then the blow up at the origin is necessarily spherical, i.e. as in case $iia)$ of Theorem \ref{thm-Rob}.

\begin{thm}\label{thm-2}
Let $\Omega=B_R\subset\R^6$ for some $R>0$ (with $\Omega=\R^6$ if $R=\infty$). 
Let $(V_k)\subset L^\infty_{rad}(B_R)$ be positive radial functions with  $V_k\to V_0$ in $L^\infty_{\loc}(B_R)$ for some positive $V_0\in C^0(B_R)$. 
Let $(u_k)$ be radial solutions to  \eqref{eq-1}-\eqref{eq-2} with $n=3$. Assume that we are in case ii) of Theorem \ref{ARSM}. Then one of the following occurs.

\noindent i)  $S_1\cup S_\varphi=\emptyset$.

\noindent ii)  $S_1\cup S_\varphi=\{0\}$

\noindent iii) $S_1=\emptyset$ and $S_\vp=\{|x|=\rho\}$ for some $\rho\in (0,R)$. 

\noindent iv)  $S_1=\{0\}$ and $S_\vp=\{|x|=\rho\}$ for some $\rho\in (0,R)$. In this case, up to replacing $u_k(x)$ with $u_k(\rho x)+\log\rho$ and $R$ by $\rho R$, we can assume $\rho=1$ and, up to adding the constant $\tfrac16\log(\tfrac{V_0(0)}{120})$ to $u_k$, we can assume $V_0(0)=120$. Then $u_k(0)\to \infty$,
\begin{equation}\label{ukbetak}
\frac{u_k(x)}{\beta_k}\xrightarrow{k\to\infty} -\left(1-|x|^2\right)^2\quad\text{in }C^5_{\loc}(B_R\setminus\{0\})
\end{equation}
 for some $\beta_k\to\infty$ that satisfies
\begin{equation}\label{betakuk(0)}
\beta_k=o(e^{2u_k(0)}).
\end{equation}
Moreover for $r_k:=2e^{-u_k(0)}\to 0$ 
\begin{equation}\label{etaklog}
\eta_k(x):=u_k(r_kx)+\log(r_k)\xrightarrow{k\to\infty}\log\left(\frac{2}{1+|x|^2}\right)=:\eta(x) \quad \text{in }C^5_{\loc}(\R^6),
\end{equation}
Finally we have the following quantization result:
\begin{equation}\label{limcurv}
\lim_{k\to\infty}\int_{B_\delta}V_ke^{6u_k}dx=\Lambda_1\quad \text{for every } \delta\in(0,1).
\end{equation}
\end{thm}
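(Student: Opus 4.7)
The plan is to first reduce the theorem to a structural analysis of $S_1$ and $S_\varphi$ via radial symmetry, then in case iv) to determine the outer profile $\varphi$ by averaging arguments, and finally to perform the blow-up analysis at the origin.

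For the reduction, radial symmetry makes both $S_1$ and $S_\varphi$ rotation-invariant, hence unions of concentric spheres. The uniform bound $\int V_k e^{6u_k}\le C$ combined with the definition of $S_1$ (mass $\ge\Lambda_1/2$ at every point) excludes any sphere of positive radius from $S_1$, since covering such a sphere by finitely many disjoint small balls would force the total mass to be as large as we like. Hence $S_1\subseteq\{0\}$. The set $S_\varphi$ is the zero set of the real-analytic radial function $\varphi$, which lies in the six-dimensional space of radial triharmonic functions spanned by $\{1,\log r,\,r^2,\,r^{-2},\,r^4,\,r^{-4}\}$; combining the sign constraint $\varphi\le 0$ (each radial zero is a local maximum, hence a double zero) with the averaging information derived below, $\varphi$ reduces to a quadratic in $r^2$, and a quadratic can have at most one double zero in $(0,R)$. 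Thus $S_\varphi$ contains at most one sphere, and the four cases exhaust all possibilities.

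In case iv), after the stated normalization, the key averaging identity
\begin{equation*}
(\Delta^2 u_k)'(r)=-\frac{A_k(r)}{|S^5|\,r^5},\qquad A_k(r):=\int_{B_r(0)} V_k e^{6u_k}\,dx,
\end{equation*}
obtained by integrating the equation over $B_r(0)$, drives the analysis. Since $u_k\to-\infty$ on compact subsets of $B_R\setminus(\{0\}\cup\{|x|=1\})$, $A_k(r)\to\kappa_0$ for each $r\in(0,1)$, where $\kappa_0$ is the mass concentrated at the origin. Dividing by $\beta_k\to\infty$ yields $(\Delta^2\varphi)'\equiv 0$ on $(0,1)$, so $\Delta^2\varphi$ is constant; since $\Delta^2\log r=-16\,r^{-4}$ is not constant, this forces the $\log r$ coefficient of $\varphi$ to vanish. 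Iterating the integration to produce representations of $(\Delta u_k)'(r)$ and $u_k'(r)$, dividing by $\beta_k$, and comparing with $(\Delta\varphi)'$ and $\varphi'$ expanded in the radial basis, the coefficients of $r^{-2}$ and $r^{-4}$ are forced to vanish likewise. Hence $\varphi$ is a polynomial in $r^2$ of degree at most two, and the conditions $\varphi(1)=\varphi'(1)=0$ (forced by $\{|x|=1\}$ being a zero level set of the nonpositive function $\varphi$) together with $\varphi\not\equiv 0$ yield $\varphi(r)=-c(1-r^2)^2$ for some $c>0$; absorbing $c$ into $\beta_k$ gives \eqref{ukbetak}.

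For the blow-up at the origin, $0\in S_1$ forces $u_k(0)\to\infty$. Setting $r_k=2e^{-u_k(0)}$ and $\eta_k(y)=u_k(r_ky)+\log r_k$, elliptic regularity plus $\eta_k(0)=\log 2$ and $\int e^{6\eta_k}\le C$ yield, along a subsequence, $\eta_k\to\eta$ in $C^5_{\loc}(\R^6)$ for a radial entire finite-volume solution of $(-\Delta)^3\eta=120\,e^{6\eta}$ with $\eta(0)=\log 2$. By the classification of Chang and Chen \cite{CC}, $\eta$ is either the standard spherical solution $\log\bigl(2/(1+|y|^2)\bigr)$ (which carries mass $\Lambda_1$) or a non-spherical one with polynomial-in-$|y|$ decay, typically $\eta_\infty(y)\sim -a|y|^2$, and total mass strictly less than $\Lambda_1$. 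The hard part, and the main obstacle, is ruling out the non-spherical case: here the strategy is to exploit the rigidity imposed by $\varphi=-(1-r^2)^2$ being regular at the origin with $\varphi(0^+)=-1$. A non-spherical $\eta$ would yield $u_k(r)\sim -a\,r^2/r_k^2$ in the intermediate shell $Lr_k\ll r\ll \delta$, growing as $r_k^{-2}=e^{2u_k(0)}/4$, whereas the outer convergence combined with leading-order matching to $\varphi(0^+)=-1$ forces $u_k(r)\sim-\beta_k\sim -u_k(0)$ just outside the bubble, which is only logarithmic in $1/r_k$. Quantifying this incompatibility through ODE estimates on $u_k,\Delta u_k,\Delta^2 u_k$ in the neck region (in the spirit of \cite{Rob}, crucially using the nonzero constant value of $\Delta^2\varphi$ derived above) gives a contradiction. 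Hence $\eta=\log\bigl(2/(1+|y|^2)\bigr)$, proving \eqref{etaklog}; the bound \eqref{betakuk(0)} follows since $\beta_k=O(u_k(0))=O(-\log r_k)=o(r_k^{-2})$. Finally, \eqref{limcurv} is obtained by splitting $\int_{B_\delta}V_ke^{6u_k}=\int_{B_{Lr_k}}+\int_{B_\delta\setminus B_{Lr_k}}$: the first piece tends to $\Lambda_1$ by the standard bubble limit, while the second vanishes as $k\to\infty$ and then $L\to\infty$ because $u_k\to-\infty$ locally uniformly on compact subsets of $B_\delta\setminus\{0\}$.
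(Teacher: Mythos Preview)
Your overall strategy parallels the paper's, and the determination of $\varphi$ via the averaging identities is essentially the same argument (the paper does it through a Green's representation of $\Delta u_k$ on $B_\delta$, you through direct integration of $(\Delta^2 u_k)'$; both work). However, there are two genuine gaps in the blow-up analysis.

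\textbf{The neck region in \eqref{limcurv}.} Your decomposition $\int_{B_\delta}=\int_{B_{Lr_k}}+\int_{B_\delta\setminus B_{Lr_k}}$ is standard, but the justification that the second integral vanishes ``because $u_k\to-\infty$ locally uniformly on compact subsets of $B_\delta\setminus\{0\}$'' does not hold: the inner radius $Lr_k$ tends to $0$, so $B_\delta\setminus B_{Lr_k}$ is not contained in any fixed compact set away from the origin, and near $|x|\sim Lr_k$ the function $u_k$ is close to the bubble, hence large. What your argument actually yields is $\int_{B_\delta\setminus B_{Lr_k}}\to\kappa_0-\Lambda_1$, where $\kappa_0$ is the mass at the origin, and showing $\kappa_0=\Lambda_1$ is precisely the content of \eqref{limcurv}. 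The paper handles the neck by proving that $r^p e^{u_k(r)}$ is monotone decreasing on an interval $(c_p r_k,t_k)$ for some $p\in(1,2)$ and some $t_k$ with $r_k=o(t_k)$, that $t_k\approx\theta_{1,k}$, and that $u_k(t_k)\le u_k(\theta_{1,k})+C\le u_k(\delta)+C\to-\infty$; this monotonicity, not local uniform divergence, is what bounds the neck integral.

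\textbf{Compactness of $\eta_k$ and the bound \eqref{betakuk(0)}.} Elliptic regularity together with $\eta_k(0)=\log 2$ and $\int e^{6\eta_k}\le C$ is not enough to extract a $C^5_{\loc}$ limit for a sixth-order equation: the upper bound $\eta_k\le\log 2$ gives $|\Delta^3\eta_k|\le C$, but $\Delta\eta_k$ and $\Delta^2\eta_k$ could still diverge (indeed $r_k^2\Delta u_k(0)$ is not a priori bounded). The paper obtains the needed control from the Green's representation of $\Delta u_k$ on $B_\tau$, which after rescaling gives
\[
\int_{B_R}\bigl|\Delta\eta_k-\beta_k r_k^2(C_3+o(1))\bigr|\,dx\le C\beta_k r_k^4 R^8+CR^4,
\]
and then argues by contradiction that $\beta_k r_k^2\to0$: otherwise $\tilde\eta_k:=\eta_k/(\beta_k r_k^2)$ would converge to a triharmonic $\tilde\eta$ with $\Delta\tilde\eta(0)=C_3>0$, contradicting $\Delta u_k(0)\le 0$. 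This simultaneously yields \eqref{betakuk(0)} and the compactness of $\eta_k$. Your route to \eqref{betakuk(0)} via ``$\beta_k=O(u_k(0))$'' presupposes the global matching $\beta_k\sim u_k(0)$, which the paper establishes only later (Theorem~\ref{thm-exp}) and only after \eqref{etaklog} and \eqref{limcurv} are already in hand; at this stage it is unjustified. Finally, the classification you need is not in \cite{CC} (an existence result) but in Theorem~\ref{thmclas}; the exclusion of non-spherical limits uses the integral bound above together with the fact that for a non-spherical solution the polynomial part $p$ satisfies $\lim_{|x|\to\infty}\Delta p\le 0$, forcing $a:=\lim\beta_k r_k^2=0$.
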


One of the claims of Theorem \ref{thm-2} is that it is not possible to have $S_1=\{0\}$, $S_\varphi=\{|x|=\rho_1\}\cup \{|x|=\rho_2\}$ for some $0<\rho_1<\rho_2<R$, which is a priori not ruled out by Theorem \ref{ARSM}, since
$$\vp(r)=-\frac{1}{r^4}(\rho_1^2-r^2)^2(\rho_2^2-r^2)^2\in \mathcal{K}(B_R,\{0\}).$$
But the most important claim of Theorem \ref{thm-2} is that the profile of $u_k$ near the origin in case \emph{iv)} must be spherical, in the sense that it corresponds to the pull-back of the metric of $S^6$ onto $\R^6$ via the stereographic projection (compare to \cite{CC}).  For instance the behaviour seen in cases $iib)$ and $iic)$ of Theorem \ref{thm-Rob}, are possible in Theorem \ref{thm-2} in case $ii)$ but not in case $iv)$. The proof of \eqref{etaklog} will use the following classification result from \cite{Mar0}, see also \cite{H-clas,JMMX, Lin}:

\begin{thmx}[C-S. Lin \cite{Lin}, Martinazzi \cite{Mar0}]\label{thmclas}
Let $u$ be a solution to \eqref{entiresol}. Then either $u$ is spherical, i.e. it has the form $u(x)=\log \frac{2\lambda}{1+\lambda^2|x-x_0|^2}$ for some $\lambda>0$, $x_0\in\R^{2n}$, or $u=v+p$ where $p$ is  an upper bounded polynomial of degree at most $2n-2$  and $\Delta^j v(x)\to 0$ as $|x|\to\infty$ for $j=1,\dots,n-1$.
\end{thmx}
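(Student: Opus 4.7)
The plan is to follow the integral representation strategy developed by Chen--Lin in dimension $4$ and extended to higher dimensions by Martinazzi, reducing the classification to a polyharmonic Almansi decomposition together with moving planes in the residual case. First I would introduce the logarithmic potential
\begin{equation*}
v(x) := \frac{1}{\gamma_6}\int_{\R^6}\log\frac{|y|+1}{|x-y|}\cdot 120\,e^{6u(y)}\,dy,
\end{equation*}
where $\gamma_6$ is chosen so that $(-\D)^3 v = 120\,e^{6u}$ in $\R^6$, the shift $|y|+1$ ensuring integrability near $y=0$. Using $\Lambda<\infty$ and standard convolution estimates, I would show
\begin{equation*}
v(x) = -\alpha\log|x| + O(1),\qquad \Delta^j v(x)\to 0 \text{ as }|x|\to\infty,\ j=1,2,
\end{equation*}
with $\alpha := \Lambda/\Lambda_1$.

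Set $p := u - v$, so that $(-\Delta)^3 p = 0$ in $\R^6$. The first key step is to prove that $p$ is a \emph{polynomial}. To obtain this, I would extract from $\Lambda < \infty$ a polynomial upper bound of the form $u(x) \le C(1+|x|^2)$ on large balls using a Br\'ezis--Merle/mean-value argument (or, equivalently, interior $W^{6,q}$-estimates applied to $u_k - v_k$ truncations), and then apply a Liouville theorem for polyharmonic functions of polynomial growth to conclude that $p$ is a polynomial.

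Next I would restrict the possible degree of $p$. The integrability $\int e^{6u}\,dx < \infty$ together with $v \sim -\alpha\log|x|$ forces $p$ to be bounded from above on $\R^6$. Decomposing $p = h_0 + |x|^2 h_1 + |x|^4 h_2$ via the Almansi formula for triharmonic functions, with $h_j$ harmonic, and invoking the fact that any nonconstant harmonic polynomial in $\R^6$ takes both signs with unbounded amplitude, one checks that the degree of each Almansi component is constrained so that $\deg p \in\{0,2,4\}$. If $\deg p\in\{2,4\}$, the second alternative of the theorem follows (and the asymptotic on $\Delta^j v$ was already established).

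The remaining case is $\deg p = 0$, i.e.\ $p\equiv c$. Here $u = v + c$ enjoys the full asymptotics $u(x)= -\alpha\log|x| + O(1)$ together with $\Delta^j u\to 0$, and I would apply the method of moving planes for integral equations (in the spirit of Chen--Li--Ou, Wei--Xu, Lin) to the integral identity $u-c = v$ to deduce that $u$ is radially symmetric about some point $x_0$. The resulting radial ODE for $(-\Delta)^3 u = 120 e^{6u}$ can then be integrated explicitly, or equivalently one transports $u$ via stereographic projection to a constant $Q$-curvature metric on $S^6$, which by the conformal-group classification must be the round metric, giving $u(x) = \log\bigl(2\lambda/(1+\lambda^2|x-x_0|^2)\bigr)$.

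The main obstacle is the interplay between the two halves: the moving plane argument does not apply to a general finite-volume solution (since in the non-spherical case $u$ need not tend to $-\infty$ along directions where $p$ attains its supremum), and the Liouville step for $p$ cannot be bypassed because without it one cannot even write the decomposition $u=v+p$. Thus the delicate point is to combine the polynomial upper bound on $u$ with the asymptotics of $v$ to prove that $p$ is a genuine polynomial of restricted degree \emph{before} attempting to use the symmetrization machinery.
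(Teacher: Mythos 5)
This statement is quoted in the paper as Theorem~\ref{thmclas} from \cite{Lin} and \cite{Mar0}; the paper itself gives no proof, so the comparison is with the proofs in those references. Your overall architecture does reproduce theirs: the logarithmic potential $v$, the triharmonic difference $p=u-v$, the reduction to $p$ being an upper bounded polynomial of degree at most $4$, the decay $\D^j v\to 0$, and a moving-plane/moving-sphere argument for the integral equation in the residual case $p\equiv\mathrm{const}$.

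There is, however, a genuine gap at the pivotal step, namely your claim that $\Lambda<\infty$ yields a pointwise bound $u(x)\le C(1+|x|^2)$ by a ``Br\'ezis--Merle/mean-value argument'' or interior $W^{6,q}$ estimates. This is precisely the step that cannot be run that way: $u$ solves a sixth-order equation, so it has no sub-mean-value property, and any local decomposition of $u$ on a ball into a Green potential of $120e^{6u}$ plus a triharmonic remainder requires control of the boundary data of $u$, $\D u$, $\D^2u$, which is exactly what is unknown a priori; Br\'ezis--Merle-type estimates only control the potential part. Lin and Martinazzi never prove a pointwise upper bound on $u$ at this stage. Instead, from $v(x)\ge -\alpha\log|x|-C$ and $\int_{\R^6}e^{6u}dx<\infty$ one gets $\int_{B_1(x)}e^{6p}\,dy\le C(1+|x|)^{6\alpha}$, hence by Jensen's inequality the averages of $p$ over unit balls are bounded above by $C\log(2+|x|)$; a Liouville-type lemma based on Pizzetti's formula (a polyharmonic function whose unit-ball averages are bounded above by a polynomial is a polynomial of degree at most $2m-2$, here $4$, proved by a one-sided Harnack argument applied iteratively to $\D^j p$) then gives that $p$ is a polynomial of degree at most $4$, automatically of even degree and bounded above. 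Note that your Almansi-plus-sign argument as written does not by itself exclude triharmonic polynomials of degree $6$ such as $|x|^4h$ with $h$ harmonic of degree $2$; these are ruled out only through the upper bound, i.e.\ through the lemma just described. Two smaller repairs: the potential estimate gives $v(x)=-\alpha\log|x|+o(\log|x|)$, not $O(1)$, at this stage (the $O(1)$ refinement needs extra decay of $e^{6u}$, available only after $\alpha$ is identified), and in the case $p\equiv\mathrm{const}$ one should first invoke the Pohozaev identity to get $\alpha=\Lambda/\Lambda_1=2$ before (or within) the moving-plane/moving-sphere step, as in \cite{Lin} and Wei--Xu. With these replacements your outline coincides with the cited proofs.
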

  

\subsection{The existence part}
 
According to Theorem \ref{thm-2}, the only case in which we can expect gluing of a \emph{concentration} blow up with a \emph{polyharmonic} blow up is case $iv$). That this can actually occur is the claim of the next theorem, which we state in dimension $6$, although it can be extended to any even dimension $2n\ge 6$, see Section \ref{dim2n}.

 \begin{thm}\label{thm-1} Let $(V_k)\subset C^0_{rad}(\R^6) $ be positive functions with $V_k\to V_\infty$  uniformly,    where $V_\infty\in C^1(\R^6)$, $V_\infty(0)=120$ (without loss of generality) and for some $a,b\ge 0$
 \begin{align}\label{condVinfty}
 \frac{d}{dr}\bra{\frac{V_\infty (r)}{e^{ar^2+br^4}}}\leq 0, \quad r\in [0,\infty).
 \end{align}
Then  
for every $\Lambda \ge\Lambda_1$ there exists a sequence $(u_k)\subset C^6_{rad}(\R^6)$ of solutions to \eqref{eq-1}-\eqref{eq-2} with $n=3$ and  $\Omega=\R^6$ such that $u_k$ is as in case iv) of Theorem \ref{thm-2} with $\rho=1$. 
Moreover $u_k(1)\to \infty$ as $k\to\infty$ and for every $\ve\in (0,1)$
\begin{align}
&\lim_{k\to\infty}\int_{B_{1+\ve}\setminus B_{1-\ve}}V_ke^{6u_k}dx=\Lambda-\Lambda_1,\label{curv1}\\
&\lim_{k\to\infty}\int_{\R^{6}\setminus (B_\ve\cup (B_{1+\ve}\setminus B_{1-\ve})}V_ke^{6u_k}dx=0.\label{curv2}
\end{align}
\end{thm}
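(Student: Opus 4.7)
The plan is to produce $u_k$ via the integral formulation: every radial solution of \eqref{eq-1} on $\R^6$ can be written as
$$u_k(x)=c_0^{(k)}+c_2^{(k)}|x|^2+c_4^{(k)}|x|^4-\frac{2}{\Lambda_1}\int_{\R^6}\log|x-y|\,V_k(y)e^{6u_k(y)}\,dy,$$
with the radial polyharmonic polynomial providing three real parameters to be matched against three scalar conditions coming from Theorem \ref{thm-2}.iv). The preliminary reduction exploits the monotonicity assumption \eqref{condVinfty}: since $a|x|^2+b|x|^4\in\ker(-\D)^3$, the substitution $u_k\mapsto u_k+\tfrac16(a|x|^2+b|x|^4)$ converts \eqref{eq-1} into an equation with $\tilde V_k(r):=V_k(r)e^{-ar^2-br^4}\to\tilde V_\infty$ \emph{non-increasing}, which will be used later to rule out further concentration in the intermediate region $(0,1)\cup(1,\infty)$.

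The three parameters $(c_0^{(k)},c_2^{(k)},c_4^{(k)})$ are chosen so that: $V_ke^{6u_k}$ concentrates at $0$ as a standard bubble of scale $\lambda_k^{-1}$ with mass $\Lambda_1$; the curvature on a thin neighbourhood of $\{|x|=1\}$ integrates to $\Lambda-\Lambda_1$ as in \eqref{curv1}; and $u_k/\beta_k\to-(1-|x|^2)^2$ of \eqref{ukbetak} for some $\beta_k\to\infty$. An asymptotic expansion of the integral formula, using that the bubble at the origin contributes $-2\log|x|+O(1)$ outside the bubble region, forces $(c_0^{(k)},c_2^{(k)},c_4^{(k)})=\beta_k(-1,2,-1)+(o(\beta_k),o(\beta_k),o(\beta_k))$, and $u_k(1)\to\infty$ amounts to $\delta_k:=c_0^{(k)}+c_2^{(k)}+c_4^{(k)}\to\infty$. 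The sphere-mass condition, evaluated by Laplace's method using $(1-|x|^2)^2\approx 4(|x|-1)^2$ near $|x|=1$, reduces to
$$|S^5|\,V_\infty(1)\,e^{6\delta_k}\sqrt{\frac{\pi}{24\beta_k}}=\Lambda-\Lambda_1+o(1),$$
which for $\Lambda>\Lambda_1$ pins down $\delta_k=\tfrac1{12}\log\beta_k+O(1)$ (for $\Lambda=\Lambda_1$ any $\delta_k\to\infty$ with $e^{6\delta_k}=o(\sqrt{\beta_k})$ suffices). Choosing $\log\lambda_k\gg\beta_k$ then makes $u_k(0)\approx 2\log\lambda_k-\beta_k+\delta_k\to\infty$ with $\beta_k=o(e^{2u_k(0)})$ as in \eqref{betakuk(0)}, and the rescaling $r_k=2e^{-u_k(0)}$, combined with the classification Theorem \ref{thmclas}, yields the limit \eqref{etaklog}.

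To turn the resulting approximate solution $U_k$ into an exact one, I would set $u_k=U_k+v_k$ and solve the fixed-point equation $v_k=(-\D)^{-3}\bigl(V_ke^{6(U_k+v_k)}-(-\D)^3U_k\bigr)$ by contraction in a weighted $L^\infty$-space adapted to the two blow-up regions. The main obstacle is the residual $\mathcal E_k:=(-\D)^3U_k-V_ke^{6U_k}$ on the thin annulus $\{|r-1|\lesssim 1/\sqrt{\beta_k}\}$, $r=|x|$: the polynomial part of $U_k$ is polyharmonic, but the nearly Gaussian profile of $V_ke^{6U_k}$ of $L^1$-mass $\Lambda-\Lambda_1$ is generically not the $(-\D)^3$ of a bounded radial function, so $\mathcal E_k$ has $L^1$-norm of order $\sqrt{\beta_k}$ there. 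Overcoming this will require either refining $U_k$ by a bounded sphere-profile correction satisfying a one-dimensional ODE in the stretched variable $s=(r-1)\sqrt{\beta_k}$, or a Lyapunov--Schmidt decomposition using the three-parameter freedom to project out the kernel of the linearized operator. The non-increasing $\tilde V_\infty$ from the preliminary reduction then provides the a priori bound excluding stray concentration in $(0,1)\cup(1,\infty)$, yielding in particular \eqref{curv2}.
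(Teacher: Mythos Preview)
Your approach is genuinely different from the paper's, and the hardest step in your plan --- the fixed-point/Lyapunov--Schmidt correction around the sphere --- is precisely the step you leave undone.

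The paper avoids approximate solutions altogether. It invokes an existing existence result (Theorem \ref{fixed1}) which, for each $\lambda\in(0,\tfrac{1}{24}]$, already produces an \emph{exact} radial solution of \eqref{eq-1} on $\R^6$ with total curvature $\Lambda$, of the form
\[
u_k(x)=\frac{1}{\gamma_6}\int_{\R^6}\log\frac{1}{|x-y|}\,V_k(y)e^{6u_k(y)}\,dy+\lambda_k\,\Delta u_k(0)\,(1-|x|^2)^2-|x|^4+\tilde c_k.
\]
One then sends $\lambda_k\to 0^+$ and proves, via a Pohozaev-type identity on the limiting equation (this is exactly where $V_\infty'\le 0$ is used) together with a quantization theorem, that $\lambda_k\Delta u_k(0)\to -\infty$. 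This quantity plays the role of $-\beta_k$, and the factor $(1-|x|^2)^2$ already hard-wired into the representation forces the polyharmonic blow-up on $\{|x|=1\}$; the remaining claims follow from the blow-up analysis of Theorem \ref{thm-2}. No perturbation argument is needed, and the monotonicity hypothesis enters through Pohozaev rather than as an a priori bound ruling out stray concentration.

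In your plan, by contrast, the passage from the ansatz $U_k$ to an exact solution is the crux, and you correctly flag that the residual $\mathcal E_k$ on the annulus $\{|r-1|\lesssim \beta_k^{-1/2}\}$ is large. The sphere profile is a codimension-one concentration, so the linearization there is a singularly perturbed sixth-order ODE in the stretched variable $s=(r-1)\sqrt{\beta_k}$, not a finite-dimensional kernel problem as in the point-blow-up literature. Neither the ``bounded sphere-profile correction'' nor the ``three-parameter Lyapunov--Schmidt'' you suggest is carried out, and it is not clear either would close: your three polynomial parameters $(c_0,c_2,c_4)$ span the kernel of $(-\Delta)^3$ near the origin, not the cokernel of the sphere linearization, so there is no evident reason they suffice to project out the obstruction on the annulus. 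Until that step is written down, the proposal is a heuristic matching of scales rather than a proof.
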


This existence theorem will be based on various ingredients. First of all the following, slightly modified and simplified version of \cite[Theorem 1.1]{H-volume}:

\begin{thmx}[Hyder \cite{H-volume}]\label{fixed1}
Let $V\in C^0_{rad}(\R^{2n})\cap L^\infty(\R^{2n})$ be such that $V(0)>0$ and $V\geq 0$ in $\R^{2n}$, $n\ge 3$. Then for every $\Lambda>0$ there exists $u\in C^{2n}_{rad}(\R^{2n})$ 
solution to
$$(-\D)^nu=Ve^{2nu}\quad\text{in }\R^{2n},\quad \Lambda=\int_{\R^{2n}}Ve^{{2n}u}dx.$$
Moreover, for every $\lambda\in(0,1/8n]$ one can express $u$ in the form $u(x)=v(x)+c-|x|^4$ where $c\in\R$ and $v$ satisfies
$$v(x)=\frac{1}{\gamma_{2n}}\int_{\R^{2n}}\log\left(\frac{1}{|x-y|}\right)V(y)e^{-2n|y|^4}e^{2n(v(y)+c)}dy+\lambda\Delta v(0)(|x|^4-2|x|^2).$$
\end{thmx}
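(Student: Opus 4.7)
The plan is to find $v$ satisfying the stated integral equation; then $u:=v+c-|x|^4$ will automatically solve the PDE since $(-\Delta)^n|x|^4=0$ for $n\ge 3$, so $(-\Delta)^n u=(-\Delta)^n v=V e^{-2n|\cdot|^4}e^{2n(v+c)}=Ve^{2nu}$. The constant $c$ is chosen to enforce the volume constraint $\Lambda=\int Ve^{2nu}\,dx$; the decisive point is that the rapidly decaying weight $e^{-2n|x|^4}$ appearing in the reformulated equation rescues integrability in a setting where $V$ is only bounded, which is the obstruction that prevents a naive logarithmic integral representation from working in dimension $2n\ge 6$.

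Fixing $\lambda\in(0,1/(8n)]$, I would set up a fixed-point problem on a closed convex set $K$ of radial continuous functions $v\colon\R^{2n}\to\R$ with uniform control on a weighted sup-norm and on the scalar quantity $\Delta v(0)$. Given $v\in K$ one first solves the scalar equation
$$\Lambda=\int_{\R^{2n}}V(y)e^{-2n|y|^4}e^{2n(v(y)+c)}\,dy$$
uniquely for $c=c(v)\in\R$, using strict monotonicity of the integrand in $c$ (this uses $V(0)>0$), and then defines
$$T[v](x):=\frac{1}{\gamma_{2n}}\int_{\R^{2n}}\log\frac{1}{|x-y|}\,V(y)e^{-2n|y|^4}e^{2n(v(y)+c(v))}\,dy+\lambda\,\Delta v(0)(|x|^4-2|x|^2).$$
Schauder's fixed point theorem is then applied to $T$ on $K$. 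Continuity and compactness of $T$ follow from the strong decay $e^{-2n|y|^4}$, which makes the logarithmic integral and its derivatives uniformly well-behaved under pointwise convergence on $K$ and yields Arzelà--Ascoli-type precompactness after localizing on compact subsets and exploiting the explicit $|x|^4-2|x|^2$ structure at infinity. Elliptic regularity on balls then upgrades any fixed point to a radial $C^{2n}$ solution.

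The main obstacle, and the reason the parameter $\lambda$ is present, is closing the a priori bound on $\Delta v(0)$, which feeds back into $T$ through the polynomial correction. Applying $\Delta$ to the fixed-point identity at $x=0$ and using $\Delta(|x|^4-2|x|^2)\big|_{x=0}=-4n$ gives the scalar relation
$$(1+4n\lambda)\,\Delta v(0)=\Delta I(0),$$
where $I(x)$ denotes the logarithmic integral. The restriction $\lambda\in(0,1/(8n)]$ keeps the factor $1+4n\lambda\in(1,3/2]$ bounded away from $0$ (and from becoming too large), so $\Delta v(0)$ is uniformly controlled in terms of $\Lambda$, $\|V\|_\infty$ and the weight $e^{-2n|\cdot|^4}$. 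Matching this bound with the $-\frac{\Lambda}{\gamma_{2n}}\log|x|$ behaviour of the logarithmic integral at infinity yields an invariant set $K$ on which Schauder applies. The delicate point is therefore not the PDE itself but the nonlinear self-reference in $T$ through $c(v)$ and $\Delta v(0)$, which must be tracked quantitatively and kept compatible with the weighted norms defining $K$ throughout the iteration.
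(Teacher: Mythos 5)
First, a caveat: this theorem is not proved in the paper at hand; it is imported from \cite{H-volume} (as a ``slightly modified and simplified'' statement), and the construction there is of the type you describe: write $u=v+c-|x|^4$, use that $(-\Delta)^n$ annihilates polynomials of degree $4$ once $n\ge 3$, fix $c$ through the volume normalization, and obtain $v$ from a Schauder fixed-point argument for the integral equation. So your overall strategy is the right one and matches the source in spirit.

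However, the two places where you commit to details are problematic. In $\R^{2n}$ one has $\Delta|x|^2=4n$ and $\Delta|x|^4\big|_{x=0}=0$, so $\Delta\bigl(|x|^4-2|x|^2\bigr)\big|_{x=0}=-8n$, not $-4n$; the scalar relation at a fixed point is $(1+8n\lambda)\Delta v(0)=\Delta I(0)$, which for $2n=6$ is exactly the identity $(1+24\lambda_k)\Delta u_k(0)=-\tfrac{4}{\gamma_6}\int_{\R^6}|y|^{-2}V_ke^{6u_k}dy$ used in \eqref{Deltauneg0}, and it explains the threshold $1/(8n)$. More importantly, your account of the role of $\lambda\in(0,1/(8n)]$ is off target: since $\lambda>0$, the factor $1+8n\lambda$ is automatically at least $1$, so there is no degeneracy to protect against, and the scalar relation by itself does not yield the claimed uniform bound on $\Delta v(0)$ ``in terms of $\Lambda$, $\|V\|_\infty$ and the weight''. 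Indeed $\Delta I(0)=-\tfrac{2n-2}{\gamma_{2n}}\int_{\R^{2n}}|y|^{-2}V(y)e^{-2n|y|^4}e^{2n(v(y)+c(v))}dy$, and bounding this requires a pointwise upper bound on $v+c(v)$ near the origin, uniform over the trial set $K$; the normalization $\int Ve^{2nu}dx=\Lambda$ alone does not control the $|y|^{-2}$ singularity. Moreover, for $v\in K$ with $\Delta v(0)>0$ the correction $\lambda\Delta v(0)|x|^4$ competes with $-|x|^4$, so even the well-definedness of $c(v)$, and the continuity and compactness of $T$, impose a quantitative constraint linking $\lambda$, the bound on $|\Delta v(0)|$ built into $K$, and the bound coming back out of $\Delta T[v](0)$; closing this loop (the invariance $T(K)\subset K$) is precisely where $\lambda\le 1/(8n)$ and the design of $K$ enter, and it is the step you assert rather than prove. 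You should also record the sign information: at a fixed point $\Delta I(0)<0$, hence $\Delta v(0)=\Delta u(0)<0$, which is what guarantees integrability of $e^{2nu}$ at the fixed point and is exactly the inequality \eqref{Deltauneg} exploited later in the paper.
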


The above result, which also holds in odd dimension $5$ and higher, completely solves problems left open in \cite{JMMX,LM-vol,HD}, proving that in dimension $5$ and higher one can find conformally Euclidean metrics with constant $Q$-curvature and total $Q$-curvature arbitrarily large, in contrast to the $4$ dimensional case, where the total $Q$-curvature can be at most $\Lambda_1=6|S^4|$ as shown by \cite{Lin}. Theorem \ref{thm-1} is naturally related to Theorem \ref{fixed1} because in case $iv)$ of Theorem \ref{thm-2}, an amount $\Lambda_1$ of $Q$-curvature concentrates at the origin, and we expect to have additional curvature concentrating at $S_\varphi$.

The strategy in the proof of Theorem \ref{thm-2} will be to take an arbitrary sequence $\lambda_k\to 0^+$ in Theorem \ref{fixed1}, and find corresponding solutions $u_k(x)=v_k(x)+c_k-|x|^4$. Then with the help of a Pohozaev-type identity from \cite{Xu} and of a quantization result from \cite{Mar2} (see also \cite{Rob2}) one proves that $\lambda_k\Delta u_k(0)\to -\infty$ and this will finally lead to Theorem \ref{thm-1}.

\subsection{A sharper blow-up analysis in the hybrid case}

In order to obtain general existence results (see Section \ref{OQ} for open questions), possibly using the Lyapunov-Schmidt reduction, it might be useful to have precise information on a model case to use as ``ansatz''. In this spirit, pushing further the blow-up analysis of Theorem \ref{thm-2}, case $iv)$, we obtain sharp global estimates which relate the behaviour near the origin and the behaviour away from it.
Moreover, using a linearization procedure partly inspired from \cite{MM}, we are able to give a better asymptotic expansion of $u_k$ near the origin.

We will assume
\begin{equation}\label{assVk}
V_k(r)=120+O(r^2)\quad \text{as }r\to 0.
\end{equation}
As mentioned in Theorem \ref{thm-2}, the choice of the particular constant $120$ is not restrictive.

\begin{thm}\label{thm-exp} Assume that we are in case $iv)$ of Theorem \ref{thm-2} with $V_k$ additionally satisfying \eqref{assVk} and (up to a scaling) with $\rho=1$. Then for every $\delta\in (0,R)$ we have 
\begin{equation}\label{ukglobal}
u_k(r)=\bar\eta_k(r)+u_k(0)(\vp(r)+1+o(1))\quad \text{on }[0,\delta]
\end{equation}
with $o(1)\xrightarrow{k\to\infty} 0$, where $\bar\eta_k(r):=\eta(\tfrac{r}{r_k})-\log(r_k)=\log\left(\frac{2/r_k}{1+r^2/r_k^2}\right)$, and as a consequence
\begin{equation}\label{betakuk}
\beta_k=u_k(0)(1+o(1)).
\end{equation}
Moreover, setting 
$\ve_k := u_k(0)e^{-2u_k(0)}$ {and fixing}
$s_k>0$ such that $s_k=\ve_k^{-\frac14}o(1)$ we have an expansion
\begin{equation}\label{etakexp}
\eta_k(x)=\eta(x)+\ve_k\psi_0(x)+\ve_k(1+|x|^4) o(1),
\end{equation}
with $o(1)\xrightarrow{k\to\infty} 0$ uniformly for $x\in B_{s_k}$, where
\begin{equation}\label{stimapsi}
\psi_0(x)=8|x|^2-48\log|x|+o(\log|x|),\quad \text{as }|x|\to\infty.
\end{equation}
\end{thm}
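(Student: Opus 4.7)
The plan is to proceed in three stages: first the global expansion \eqref{ukglobal} on $[0,\delta]$ (which delivers \eqref{betakuk} as a by-product), then the rescaled linearization yielding \eqref{etakexp}, and finally the Newtonian-potential asymptotics \eqref{stimapsi}. For Stage 1, I would set $w_k(r) := u_k(r) - \bar\eta_k(r)$: from $\bar\eta_k(0) = u_k(0)$ one gets $w_k(0) = 0$, and since $(-\Delta)^3\bar\eta_k = 120 e^{6\bar\eta_k}$ we have $(-\Delta)^3 w_k = V_k e^{6u_k} - 120 e^{6\bar\eta_k}$. On the inner region $\{r \le L r_k\}$, the convergence \eqref{etaklog} gives $w_k(r_k\,\cdot) = \eta_k - \eta \to 0$ in $C^5_{\loc}(\R^6)$, so $w_k = o(1)$ there. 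In the annulus $\{L r_k \le r \le \delta\}$, both source measures converge weakly to $\Lambda_1 \delta_0$ (the first by \eqref{limcurv} and $S_1 = \{0\}$, the second because $120 e^{6\bar\eta_k}$ is a rescaling of the standard density of total mass $\Lambda_1$). Subtracting the logarithmic Green-function representations of $u_k$ and $\bar\eta_k$ cancels the singular $\log|x|$ contributions on $B_\delta$, leaving $w_k$ asymptotic to a smooth polyharmonic function; this polyharmonic function must coincide with $u_k(0)(\varphi + 1)$, as verified on $B_\delta \setminus \{0\}$ by combining $u_k/\beta_k \to \varphi$ with $\bar\eta_k(r)/u_k(0) \to -1$, and at $r = 0$ by the shared value $0$. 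Evaluating the resulting \eqref{ukglobal} at any fixed $r_0 \in (0,1)$ with $\varphi(r_0) \ne 0$ and dividing by $\beta_k$ produces \eqref{betakuk}.

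For Stage 2, I set $\psi_k := (\eta_k - \eta)/\varepsilon_k$; since $\eta_k(0) = \log 2 = \eta(0)$ one has $\psi_k(0) = 0$. Subtracting the equations for $\eta_k$ and $\eta$ and Taylor-expanding the exponential yields
\begin{equation*}
(-\Delta)^3 \psi_k - 6 \tilde V_k e^{6\eta}\psi_k = \varepsilon_k^{-1}(\tilde V_k - 120)e^{6\eta} + O(\varepsilon_k e^{6\eta}\psi_k^2),
\end{equation*}
with $\tilde V_k(x) := V_k(r_k|x|)$. Since $r_k^2 = 4\varepsilon_k/u_k(0)$, the hypothesis \eqref{assVk} gives $\varepsilon_k^{-1}|\tilde V_k - 120| = O(|x|^2/u_k(0)) \to 0$ uniformly on compacta, so elliptic compactness yields $\psi_k \to \psi_0$ in $C^5_{\loc}(\R^6)$ with $\psi_0$ radial, $\psi_0(0) = 0$, solving the linearized equation $(-\Delta)^3 \psi_0 = 720\, e^{6\eta}\psi_0$. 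To extend this to the growing ball $B_{s_k}$ I plug \eqref{ukglobal} into the matching annulus $|x| \asymp \varepsilon_k^{-1/4}$ (where $r_k|x| \to 0$ still lies in the outer regime of Stage 1): the calculation
\begin{equation*}
\tfrac{u_k(0)}{\varepsilon_k}\bigl(\varphi(r_k|x|) + 1\bigr) = 8|x|^2 - 16\,\tfrac{\varepsilon_k}{u_k(0)}|x|^4 = 8|x|^2 + o(1)(1+|x|^4)
\end{equation*}
pins the asymptotic growth $\psi_0(x) = 8|x|^2 + o(|x|^2)$, and a blow-up/contradiction argument in the spirit of \cite{MM}, based on invertibility of $(-\Delta)^3 - 720 e^{6\eta}$ modulo its radial kernel (which by ODE analysis is spanned by the scaling Jacobi field $(1-|x|^2)/(1+|x|^2)$), upgrades pointwise convergence to the weighted estimate $\varepsilon_k(1+|x|^4) o(1)$ of \eqref{etakexp}.

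For Stage 3, since $\psi_0 = O(|x|^2)$ at infinity and $e^{6\eta} = 2^6(1+|x|^2)^{-6}$, the right-hand side of the linearized equation lies in $L^1(\R^6)$; representing $\psi_0$ through the radial fundamental solution $-\gamma_6^{-1}\log|\cdot|$ of $(-\Delta)^3$ on $\R^6$ plus a radial polyharmonic polynomial $Q$ pinned by $\psi_0(0) = 0$ and the $8|x|^2$ matching,
\begin{equation*}
\psi_0(x) = 8|x|^2 + \tfrac{1}{\gamma_6}\int_{\R^6}\log\tfrac{1}{|x-y|}\,720\, e^{6\eta(y)}\psi_0(y)\,dy + Q(x),
\end{equation*}
and expanding $\log|x-y|$ in $|y|/|x|$ for $|x|\to\infty$ isolates the $\log|x|$ coefficient $-\gamma_6^{-1}\int_{\R^6} 720\, e^{6\eta}\psi_0\,dy$. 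A closed-form evaluation of this integral (using the linearized equation, integration by parts against the scaling Jacobi field, and the normalization $\gamma_6 = \Lambda_1/2$ coming from $(-\Delta)^3\log|x|^{-1} = \gamma_6\delta_0$ applied to the standard bubble) yields the value $48$, giving the $-48\log|x|$ term of \eqref{stimapsi}.

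The main obstacle sits in Stage 2: passing from $C^5_{\loc}$-convergence on fixed compact sets to the weighted uniform bound on the expanding ball $B_{s_k}$. This requires both a full identification of the radial kernel of $(-\Delta)^3 - 720 e^{6\eta}$ and sharp weighted Schauder-type estimates with weight $(1 + |x|^4)$, so that the matching information at $|x| \asymp \varepsilon_k^{-1/4}$ propagates all the way inward. The $-48$ coefficient in \eqref{stimapsi} is intrinsically nonlocal, coming from the logarithmic fundamental solution of $(-\Delta)^3$; it cannot be read off from the inner/outer matching and must be extracted via the separate integral computation of Stage 3.
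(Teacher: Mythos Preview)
Your three-stage architecture matches the paper's, and Stage~3 (the Newtonian-potential expansion and the integration-by-parts against the Jacobi field $\frac{1-|x|^2}{1+|x|^2}$ to extract the logarithmic coefficient) is essentially the paper's Proposition~\ref{psi-vol}. But Stages~1 and~2 contain real gaps.

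\textbf{Stage 1.} Your identification of the polyharmonic part of $w_k=u_k-\bar\eta_k$ as $u_k(0)(\varphi+1)$ is circular as written. On $(0,\delta]$ the outer information gives only $w_k=\beta_k\varphi(1+o(1))+u_k(0)(1+o(1))$; to rewrite this as $u_k(0)(\varphi+1+o(1))$ you already need $\beta_k=u_k(0)(1+o(1))$, which is \eqref{betakuk}. The non-circular argument is to prove first that $w_k=\beta_k(\varphi+1)+o(\beta_k+u_k(0))$ and \emph{then} evaluate at a fixed point (the paper uses $r=\tfrac12$) to deduce $\beta_k\sim u_k(0)$. Moreover, ``subtracting Green-function representations'' is not available: $u_k$ has no explicit integral representation on $B_\delta$ because the boundary data are unknown. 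The paper instead climbs the ODE ladder $\Delta^2 u_k\to\Delta u_k\to u_k$, controlling each step through the precise relations $\beta_k\theta_{4,k}^4\to\tfrac{1}{12}$, $\beta_k\theta_{2,k}^2\to\tfrac13$, $\Delta u_k(\theta_{3,k})/\beta_k\to 24$ (Lemmas~\ref{akbk}, \ref{max-laplacian}), which are what actually replace the Green-function cancellation you invoke.

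\textbf{Stage 2.} Setting $\psi_k=(\eta_k-\eta)/\varepsilon_k$ directly is premature: to get ODE compactness you need $|\Delta\psi_k(0)|+|\Delta^2\psi_k(0)|\le C$, i.e.\ $|\Delta^j\eta_k(0)-\Delta^j\eta(0)|=O(\varepsilon_k)$ for $j=1,2$, and nothing so far gives that. The paper handles this by first rescaling with an \emph{abstract} parameter
\[
\tilde\varepsilon_k:=\max\bigl\{|\Delta\eta_k(0)-\Delta\eta(0)|,\,|\Delta^2\eta_k(0)-\Delta^2\eta(0)|,\,r_k^2\sqrt{\beta_k}\bigr\},
\]
which forces $|\Delta^j\psi_k(0)|\le 1$ and hence $\psi_k\to\psi$ in $C^5_{\loc}$. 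Only afterwards (Lemma~\ref{psi-nontrivial}) does one prove, by evaluating at the nodal radii $\theta_{4,k}/r_k$ and $\theta_{2,k}/r_k$, that the quartic coefficient of $\psi$ vanishes and the quadratic one satisfies $a=\lim 2r_k^2\beta_k/\tilde\varepsilon_k>0$; this is what identifies $\tilde\varepsilon_k\approx\varepsilon_k$ a posteriori. Your matching computation at $|x|\asymp\varepsilon_k^{-1/4}$ is the right \emph{heuristic} for why $a=8$ after renormalization, but it cannot replace this bootstrapping: if $\psi$ happened to grow like $|x|^4$ (i.e.\ $b\ne 0$), the matching would fail, and ruling out $b\ne 0$ needs the $\theta_{4,k}$ argument. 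Finally, for the weighted bound on $B_{s_k}$ the paper does not use a blow-up/contradiction in the style of \cite{MM} but a Schauder fixed-point argument (Lemma~\ref{2.8}) in the norm $\sup_r\bigl(|\Delta^2\phi|+r^{-2}|\Delta\phi|+r^{-4}|\phi|\bigr)$; your ``invertibility modulo the radial kernel'' idea would require a Fredholm analysis in weighted spaces that is not set up here.
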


A consequence of Theorem \ref{thm-exp} is a new phenomenon strongly related to the gluing of a concentration blow-up with a polyharmonic blow-up. While it is easy to construct a concentration blow up as $u_k(x)=\eta(x/r_k )+\log(1/r_k)$ with $\eta$ as in \eqref{etaklog},  
in this case using \eqref{defint} we have 
$$\int_{B_\delta} 120e^{6u_k}dx=\Lambda_1\bra{1-\frac{640}{\delta^6 e^{6u_k(0)}} +\frac{o(1)}{e^{6u_k(0)}} }.$$
i.e. in small neighborhoods of the origin the curvature concentrates to $\Lambda_1$ from below.
In the case of gluing with a polyharmonic blow up we obtain the opposite result. In this sense we see that the asymptotic behavior of the curvature concentrating at the origin is \emph{nonlocal}: it also depends on the behavior at larger scales.
 
\begin{thm}\label{thm-curv} Under the assumptions of Theorem \ref{thm-2}, case $iv)$ with $\rho=1$, and additionally assuming \eqref{assVk}, the limiting value $\Lambda_1$ in \eqref{limcurv} is reached from above. More precisely 
\begin{equation}\label{curvsharp}  
\int_{B_\delta}V_ke^{6u_k}dx=\Lambda_1+(24\Lambda_1+o(1))\frac{u_k(0)}{e^{2u_k(0)}}\quad   \text{for } 0<\delta < \delta^*:= \sqrt{1-\frac{1}{\sqrt 3}}. 
\end{equation}
\end{thm}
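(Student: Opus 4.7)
The strategy is to split the integral at an intermediate scale $r_k s_k$, compute the inner contribution to first order in $\ve_k$ via the sharp expansion \eqref{etakexp}, and bound the outer contribution by showing it is exponentially small precisely when $\delta<\delta^*$. I would fix $s_k\to\infty$ with $s_k=\ve_k^{-1/4}o(1)$ (so that \eqref{etakexp} applies on $B_{s_k}$) and in addition $s_k^{-6}=o(\ve_k)$; for instance $s_k=\ve_k^{-1/5}$. Write
$$\int_{B_\delta}V_k e^{6u_k}\,dy=I_k^{\rm in}+I_k^{\rm out},\quad I_k^{\rm in}:=\int_{B_{r_ks_k}}V_ke^{6u_k}dy,\ I_k^{\rm out}:=\int_{B_\delta\setminus B_{r_ks_k}}V_ke^{6u_k}dy.$$

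After the rescaling $y=r_kx$, Taylor-expanding $V_k(r_kx)=120+O(r_k^2|x|^2)$ from \eqref{assVk} (recall $r_k^2=O(\ve_k/u_k(0))$) and $e^{6\eta_k}=e^{6\eta}(1+6\ve_k\psi_0+\ve_k(1+|x|^4)o(1)+\text{quadratic})$ from \eqref{etakexp}, using the finiteness of $\int_{\R^6}(1+|x|^4)e^{6\eta}dx$ and $\int_{\R^6}|x|^2e^{6\eta}dx$, and the tail estimate $120\int_{|x|>s_k}e^{6\eta}=10\Lambda_1\,s_k^{-6}+O(s_k^{-8})=o(\ve_k)$, I obtain
$$I_k^{\rm in}=\Lambda_1+720\ve_k\int_{B_{s_k}}e^{6\eta}\psi_0\,dx+o(\ve_k).$$
To identify the $\psi_0$-integral I use that matching orders of $\ve_k$ in the PDE $(-\Delta)^3\eta_k=V_k(r_k\cdot)e^{6\eta_k}$ forces $\psi_0$ to satisfy the linearized equation $(-\Delta)^3\psi_0=720\,e^{6\eta}\psi_0$ (the $V_k-120$ contribution is $O(\ve_k/u_k(0))$ and is absorbed in the error). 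Integration by parts on $B_{s_k}$ then gives
$$720\int_{B_{s_k}}e^{6\eta}\psi_0\,dx=-\omega_5\,s_k^5\,(\Delta^2\psi_0)'(s_k),\qquad \omega_5=\pi^3.$$
Differentiating the far-field expansion \eqref{stimapsi} using $\Delta\log|x|=4/|x|^2$ and $\Delta(|x|^{-2})=-4/|x|^4$ in $\R^6$, one computes $\Delta^2\psi_0(x)=768/|x|^4+o(|x|^{-4})$, hence $(\Delta^2\psi_0)'(s_k)=-3072/s_k^5+o(s_k^{-5})$, so the boundary integral equals $3072\pi^3+o(1)=24\Lambda_1+o(1)$, producing the correction $24\Lambda_1\ve_k+o(\ve_k)$ in $I_k^{\rm in}$.

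For the outer part, the expansion $\bar\eta_k(r)=u_k(0)-\log(1+r^2/r_k^2)=-u_k(0)-2\log r+2\log 2+O(r_k^2/r^2)$, valid for $r\gg r_k$, combined with \eqref{ukglobal} yields on $[r_ks_k,\delta]$
$$u_k(r)=-u_k(0)(1-r^2)^2-2\log r+2\log 2+u_k(0)o(1)+O(r_k^2/r^2).$$
Provided the remainder $u_k(0)o(1)$ is at worst $O(\log u_k(0))$ on this interval — a quantitative refinement that I expect from the integral representations underlying Theorems \ref{thm-2} and \ref{thm-exp} — one obtains $e^{6u_k(r)}\le C u_k(0)^{C}r^{-12}e^{-6u_k(0)(1-r^2)^2}$, and therefore $I_k^{\rm out}\le C u_k(0)^{C}e^{-6u_k(0)(1-\delta^2)^2}$. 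The threshold $\delta^*=\sqrt{1-1/\sqrt 3}$ is defined exactly so that $6(1-(\delta^*)^2)^2=2$; for $\delta<\delta^*$ the last exponential is $o(\ve_k)=o(u_k(0)e^{-2u_k(0)})$ even after absorbing the polynomial prefactor. Thus $I_k^{\rm out}=o(\ve_k)$, and summing with $I_k^{\rm in}$ gives \eqref{curvsharp}.

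The main technical obstacle is the improved control $u_k(0)o(1)=O(\log u_k(0))$ on the remainder in \eqref{ukglobal}, which is what allows the exponential decay to dominate on the outer annulus; any polynomial-in-$u_k(0)$ bound would suffice. A secondary point is the justification that the pointwise asymptotic \eqref{stimapsi} can be differentiated twice to produce the claimed behaviour of $\Delta^2\psi_0$ at infinity, which should follow by elliptic regularity applied to the linearized equation satisfied by $\psi_0$ together with the integral representation used in the proof of Theorem \ref{thm-exp}.
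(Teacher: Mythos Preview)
Your inner computation is correct and equivalent to the paper's: the identity $720\int_{\R^6}\psi_0 e^{6\eta}\,dx=\gamma_6\alpha$ with $\alpha=6a+48b=48$ from Proposition~\ref{psi-vol} gives exactly the $24\Lambda_1$ you obtain by integrating $(-\Delta)^3\psi_0$ and reading off $(\Delta^2\psi_0)'$ from the far field. The differentiability of \eqref{stimapsi} that you flag as a secondary point is precisely \eqref{psiasymbis}, and the fact that $\psi_0$ solves the linearized equation is built into the construction in Section~4.2.

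The outer estimate, however, is where your argument diverges from the paper and where your self-identified ``main technical obstacle'' is a misdiagnosis. Two remarks:

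\emph{First}, your claimed bound $I_k^{\rm out}\le C u_k(0)^{C}e^{-6u_k(0)(1-\delta^2)^2}$ is not obtainable from the pointwise inequality you write, because on $[r_ks_k,\delta]$ the factor $r^{-12}$ produces, after integrating $r^5\,dr$, a contribution of order $(r_ks_k)^{-6}\approx e^{6u_k(0)}s_k^{-6}$, which is exponential in $u_k(0)$, not polynomial. One must split further, say at a small \emph{fixed} $\tau\in(0,\delta)$, and use that $(1-r^2)^2\ge(1-\tau^2)^2$ is close to $1$ on $[r_ks_k,\tau]$.

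\emph{Second}, once you do split at a fixed $\tau$, the refinement $o(1)=O(\log u_k(0)/u_k(0))$ is \emph{not} needed: on $[\tau,\delta]$ the strict inequality $(1-\delta^2)^2>\tfrac13$ gives an exponent $-(2+\eta)u_k(0)$ with $\eta>0$, and the $o(u_k(0))$ error is absorbed by $\eta u_k(0)$. This is exactly how the paper proceeds: it pushes the \emph{inner} estimate all the way to the fixed radius $\tfrac{1}{10}$ by means of Lemma~\ref{vol-psik} (whose proof on the neck $[r_ks_k,t_k]$ relies on the monotonicity of $r^pe^{u_k}$ from Lemma~\ref{neck1}, not on \eqref{ukglobal}), and then treats only the fixed annulus $[\tfrac{1}{10},\delta]$ as ``outer'', where the plain $o(1)$ in \eqref{ukbetak}--\eqref{betakuk} suffices. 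In short: the obstacle you name is avoidable, and the paper avoids it by using already-established monotonicity on the neck rather than seeking sharper remainder bounds in \eqref{ukglobal}.
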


We can compare \eqref{curvsharp} with an analog energy expansion for the Moser-Trudinger equation given by Mancini and the second author \cite{ManMar} for the equation
\begin{equation}\label{eqMT}
-\Delta u=\lambda ue^{u^2} \quad \text{in }B_1\subset\R^2,\quad u=0\quad \text{on }\partial B_1,
\end{equation}
building upon \cite{MM}. A sequence $(u_k)$ of positive (hence radial, by the moving-plane technique) solutions to \eqref{eqMT} for some $\lambda_k>0$, with $u_k(0)\to\infty$ satisfies
\begin{equation}\label{MTexp}
4\pi +\frac{4\pi+o(1)}{u_k^4(0)}\le \int_{B_\delta}|\nabla u_k|^2dx\le 4\pi +\frac{6\pi+o(1)}{u_k^4(0)}.
\end{equation}
In spite of the similarities in the arguments, \eqref{MTexp} essentially depends on the Taylor expansion of the nonlinearity $ue^{u^2}$, which enjoys only an approximate scale invariance, contrary to the nonlinearity $e^{6u}$.

\paragraph{Notation}
We will often use the following constants:
\begin{equation}\label{constants}
\gamma_6 =2^6\pi^3, \quad \omega_6:=|S^6|=\frac{16}{15}\pi^3,\quad \omega_5:=|S^5|=\pi^3, \quad \Lambda_1 =2^7\pi^3 \quad \text{for }n=3,
\end{equation}
where $\Delta^3 \log |x|=\gamma_6\delta_0$ in $\R^6$. 

For sequences $(a_k)$ and $(b_k)$ with $b_k>0$ 
\begin{align*}
&a_k\approx b_k\quad\text{ if  }  \frac{1}{C}a_k\leq b_k\leq Ca_k,\\
&a_k=O(b_k )\quad\text{ if } | a_k|\leq C b_k, \\
&a_k=o(b_k) \quad\text{ if  }  \lim_{k\to\infty}\frac{a_k}{b_k}=0,
\end{align*}
 where  $C>0$ is independent of $k$. 
 
In the proofs we will often extract subsequences without explicitly mentioning it. Moreover, with a slight abuse of notation, we will use the notation $u(x)$ and $u(r)$ or $u(|x|)$ to denote the same radially symmetric function $u$.


\section{Proof of Theorem \ref{thm-2}}

\subsection{The possible blow-up sets $S$}

Consider Theorem \ref{ARSM}. Clearly either $S_1=\emptyset$ or $S_1=\{0\}$. We consider the two cases separately.

\medskip

\noindent\textbf{Case $S_1=\emptyset$.} It is not difficult to see that all radial functions in $\mathcal{K}(B_R,\emptyset)$ are of the form
$$\varphi(x)= a+b|x|^2+c|x|^4$$
and this easily leads to either $S_\varphi=\emptyset$ (case i) of the theorem) or $S_\varphi=\{0\}$ (case ii) of the theorem) or $S_\varphi=\{|x|=\rho\}$ for some $\rho\in (0,R)$ (case iii) of the theorem).

\medskip

\noindent\textbf{Case $S_1=\{0\}$.} If $S_\varphi=\emptyset$, then we are in case ii) of the theorem. If $S_\varphi\neq \emptyset$, since $S_\varphi\subset B_R\setminus \{0\}$ and $\varphi$ is radial, we can assume that $\{|x|=\rho\}\subset S_\varphi$ for some $\rho\in (0,R)$, and to simplify the notation we can assume, up to a scaling, $\rho=1$. In the next lemma we collect some important information about the sign and the zeroes of $u_k$, $\Delta u_k$ and $\Delta^2 u_k$ and their derivatives for $k$ large, compare to Figure \ref{graphs}.

 \begin{lem} \label{monotone}
Assume that
 \begin{align}
 S_1=\{0\}\quad \text{and} \quad \{|x|=1\}\subset S_\vp.  \label{eq-3} 
 \end{align}
Then for $k$ large there exist
$\theta_{1,k},\tilde \theta_{1,k}, \theta_{2,k},\tilde \theta_{2,k},\theta_{3,k},\theta_{4,k}\in (0,R)$
with
$$\theta_{2,k}<\theta_{1,k}<\tilde\theta_{1,k}\xrightarrow{k\to\infty} 1,\quad \theta_{2,k}<\theta_{3,k}<\tilde{\theta}_{2,k},\quad \theta_{4,k}<\theta_{3,k},$$
such that
$$u_k'(\theta_{1,k})=u_k'(\tilde \theta_{1,k})=\Delta u_k(\theta_{2,k})=\Delta u_k(\tilde \theta_{2,k}) =(\Delta u_k)'(\theta_{3,k})=\Delta^2 u_k(\theta_{4,k})=0$$
and the following holds.
\begin{align}
\D^2u_k>0& \quad\text{on }(0,\theta_{4,k}), \quad \D^2u_k<0\quad\text{on }(\theta_{4,k},R)\label{D2uk}\\
(\D u_k)'>0& \quad \text{on }(0,\theta_{3,k}), \quad (\D u_k)'<0\quad \text{on }(\theta_{3,k},R)\label{Duk'}\\
u_k'<0&\quad  \text{on }(0,\theta_{1,k})\cup (\tilde\theta_{1,k},R),\quad u_k'>0\quad \text{on }(\theta_{1,k},\tilde \theta_{1,k}).\label{uk'}
\end{align}
Moreover $u_k(0)\to\infty$, and $u_k\to-\infty$  uniformly on $(\theta_{1,k},1-\ve)\cup (1+\ve,R)$ for every $\ve>0$. Finally $S_\varphi=\{|x|=1\}$.
\end{lem}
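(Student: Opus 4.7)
The plan is a cascading sign analysis, working downward from the sixth-order equation level-by-level: from the sign of $\Delta^3 u_k$ I deduce the monotonicity of $\Delta^2 u_k$, locate its zero, propagate to the sign structure of $(\Delta u_k)'$, then $\Delta u_k$, and finally $u_k'$. The inputs at every stage are (i) the radial integrated form of $\Delta^3 u_k = -V_k e^{6u_k} < 0$, (ii) the $C^5$-convergence $u_k/\beta_k \to \varphi$ on compact subsets of $B_R\setminus S$ from Theorem~\ref{ARSM}, and (iii) the concentration of mass at $0$.

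I would begin by setting $w_k := \Delta^2 u_k$. From $-\Delta w_k = V_k e^{6u_k} > 0$, the radial identity $r^5 w_k'(r) = -\int_0^r s^5 V_k e^{6u_k}\,ds$ immediately gives $w_k' < 0$ on $(0,R)$, proving the strict monotonicity of $w_k$. Integrating a second time,
\[ w_k(0) - w_k(\delta) = \int_0^\delta \frac{1}{\omega_5\, s^5} \int_{B_s} V_k e^{6 u_k}\,dx\,ds, \]
so the concentration bound $\int_{B_s} V_k e^{6 u_k} \geq \Lambda_1/2$ (for every small $s>0$ when $k$ is large, since $0\in S_1$), together with $w_k(\delta)/\beta_k \to \Delta^2 \varphi(\delta)$ bounded by Theorem~\ref{ARSM}, forces $w_k(0)\to +\infty$. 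Using the explicit form of radial $\varphi \in \mathcal{K}(B_R,\{0\})$,
\[ \varphi(r) = c_0 + c_1 r^2 + c_2 r^4 + c_3 r^{-2} + c_4 r^{-4} + c_5 \log r, \qquad \Delta^2 \varphi(r) = 384\, c_2 - 16\, c_5\, r^{-4}, \]
together with the constraints $\varphi\leq 0$, $\varphi\not\equiv 0$, $\varphi(1)=\varphi'(1)=0$ (forced by $\{|x|=1\}\subset S_\varphi$), I would locate a radius $\delta_0 \in (0,1)$ where $\Delta^2 \varphi(\delta_0) < 0$; combined with strict monotonicity and $w_k(0)\to\infty$, this produces the unique zero $\theta_{4,k}$ with the sign pattern \eqref{D2uk}.

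For the next level, $(r^5 (\Delta u_k)')' = r^5 \Delta^2 u_k$; the sign of $w_k$ from the previous step makes $r^5(\Delta u_k)'$ start at $0$, increase on $(0,\theta_{4,k})$, and strictly decrease afterwards, while $(\Delta u_k)'/\beta_k \to (\Delta \varphi)' < 0$ at some point past $\theta_{4,k}$ delivers the unique zero $\theta_{3,k}>\theta_{4,k}$ and the sign pattern \eqref{Duk'}. One level down, $\Delta u_k$ attains its maximum at $\theta_{3,k}$; integrating twice as in the previous step applied to the pair $(\Delta u_k, \Delta^2 u_k)$ gives $\Delta u_k(0) \to -\infty$ (equivalently, the concentration-blow-up profile at scale $r_k = 2e^{-u_k(0)}$ yields $\Delta u_k(0)\asymp -12\,e^{2 u_k(0)}$), while the limit $\Delta u_k/\beta_k \to \Delta\varphi$ takes positive values on some subinterval of $(\theta_{4,k},1)$, giving the two zeros $\theta_{2,k}<\theta_{3,k}<\tilde\theta_{2,k}$. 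A final propagation using $(r^5 u_k')' = r^5 \Delta u_k$ with the sign pattern of $\Delta u_k$ produces $u_k'$ with exactly the zeros $\theta_{1,k}<\tilde\theta_{1,k}$ of \eqref{uk'}; the inequality $\theta_{2,k}<\theta_{1,k}$ follows from the order in which the sign changes of $r^5 u_k'$ occur.

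The remaining claims are quick: since $u_k$ has a unique interior local maximum at $\tilde\theta_{1,k}$, no polyharmonic blow-up sphere other than $\{|x|=1\}$ is possible, so $S_\varphi=\{|x|=1\}$ and $\tilde\theta_{1,k}\to 1$. That $u_k(0)\to\infty$ follows from the concentration of mass at $0$, the fact that $u_k'\leq 0$ on $(0,\theta_{1,k})$, and $u_k\to -\infty$ locally uniformly on $B_R\setminus S$; finally, $u_k\to -\infty$ on $(\theta_{1,k},1-\varepsilon)\cup(1+\varepsilon,R)$ follows from the monotonicity of $u_k$ on these intervals together with $u_k(1\pm\varepsilon)\to -\infty$. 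The main obstacle I expect lies in Step~1: the family of nonpositive radial polyharmonic $\varphi$ with a double zero at $r=1$ accommodates several qualitative shapes, and producing a radius $\delta_0$ where $\Delta^2 \varphi(\delta_0)<0$ without fully classifying $\varphi$ is the delicate part of the argument; once this is in place, the sign cascade propagates robustly through the ODE hierarchy.
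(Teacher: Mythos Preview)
Your level-by-level cascade is the same skeleton as the paper's proof, but the way you feed information into each level is different, and that difference creates real gaps.

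The central problem is your argument that $w_k(0)=\Delta^2 u_k(0)\to+\infty$. Concentration at $0$ gives
\[
w_k(0)-w_k(\delta)=\int_0^{\delta}\frac{1}{\omega_5 s^{5}}\int_{B_s}V_k e^{6u_k}\,dx\,ds\longrightarrow+\infty,
\]
but the convergence $w_k(\delta)/\beta_k\to\Delta^2\varphi(\delta)$ only tells you $w_k(\delta)=O(\beta_k)$, and at this stage you have no comparison between the concentration rate and $\beta_k$; the two diverging quantities could cancel. The same defect reappears when you claim $\Delta u_k(0)\to-\infty$: the integration argument has the identical rate-comparison gap, and the alternative you offer (the blow-up profile at scale $r_k$) is circular, since \eqref{etaklog} is proven only after the present lemma. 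Finally, your cascade needs sign information about $\Delta^2\varphi$, $(\Delta\varphi)'$ and $\Delta\varphi$ at well-chosen radii; you flag this as ``the delicate part'', and indeed it would force a case analysis of the six-parameter family of radial triharmonic functions, without which several of your sign inputs are unjustified.

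The paper sidesteps all of this by reversing the direction of the argument: it uses only the elementary fact that $\varphi$ attains its maximum $0$ at $r=1$, so one can pick $\delta$ with $\varphi'(1-\delta)>0$ and $\varphi'(1+\delta)<0$ (and $\varphi(1\pm\delta)<0$, so the $C^5$-convergence is available there). Each sign claim on $\Delta^j u_k(0)$ is then proven by \emph{contradiction}: assuming the wrong sign, one propagates down through \eqref{w'} to force $u_k'$ to have too few sign changes, contradicting $u_k'(1-\delta)>0>u_k'(1+\delta)$ (together with \eqref{supukve} and \eqref{eq-2}). No quantitative divergence of $\Delta^j u_k(0)$ and no higher-order information about $\varphi$ is needed. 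Your proof becomes correct if you replace the rate-comparison steps by these contradiction arguments.
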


\begin{proof} We will use that
\begin{equation}\label{Delta3neg}
\Delta^3 u_k<0, 
\end{equation}
which follows from \eqref{eq-1} since $V_k>0$, and repeatedly apply
\begin{align}\label{w'}
w'(r)=\frac{1}{\omega_5r^{5}}\int_{B_{r}}\D w(x)dx,\quad w\in W^{2,p}_{rad},
\end{align}
where we can take $p>6$, so that $w'\in C^0(0,R)$. We will also need \eqref{eq-2}, \eqref{convbetak} and
\begin{equation}\label{supukve}
\sup_{B_\ve} u_k\to\infty \quad \text{for every }\ve>0,
\end{equation}
which follows from $S_1=\{0\}$.

Since $\vp\leq 0$ in $(0,R)$ and $\vp(1)=0$, we can choose $\delta\in (0,1)$ such that
\begin{equation}\label{vpdelta}
\vp(1-\delta)<0, \quad\vp'(1-\delta)>0, \quad \vp(1+\delta)<0,\quad \vp'(1+\delta)<0. 
\end{equation}

From \eqref{Delta3neg} and \eqref{w'} we infer that
\begin{equation}\label{D2uk'}
(\Delta^2 u_k)'<0\quad \text{on }(0,R).
\end{equation}

\medskip

\noindent\textbf{Step 1} We claim that $\Delta^2 u_k(0)>0$. If this were not the case, using \eqref{w'} and \eqref{D2uk'} we would obtain $(\Delta u_k)'<0$ on $(0,R)$. We will see that this contradicts \eqref{vpdelta}. Indeed if $\Delta u_k(0)\le 0$, then $\Delta u_k< 0$ on $(0,R)$, hence by \eqref{w'} we have $u_k'<0$ on $(0,R)$, but then also $\varphi'\le 0$ on $(0,R)$, contradicting \eqref{vpdelta}. If $\Delta u_k(0)>0$, then $u_k'>0$ on $(0,t_k)$ for some $t_{k}\in (0,R]$. By \eqref{supukve} and \eqref{eq-2} we must have $t_k\to 0$. Using \eqref{w'} we then infer that $u_k'<0$ on $(t_k,R)$ hence $\varphi'<0$ on $(0,R)$, again contradicting \eqref{vpdelta}. Then Step 1 is proven.

\medskip

\noindent\textbf{Step 2} We claim that $(\Delta u_k)'$ changes sign only once from positive to negative, {there exists $\theta_{4,k}\in (0,R)$ such that $\Delta^2 u_k(\theta_{4,k})=0$, and $\Delta u_k$ has at most $2$ zeroes. Indeed, thanks to Step 1 and \eqref{w'} we know that $(\Delta u_k)'(r)>0$ for $r>0$ small. If $(\Delta u_k)'>0$ on $(0,R)$, again using \eqref{w'} with arguments similar to those of Step 1 we would obtain a contradiction. Using the monotonicity of $\Delta^2 u_k$ and \eqref{w'}, there must exist $\theta_{4,k}$ such that \eqref{D2uk} holds, and once $(\Delta u_k)'$ becomes negative, it remains so.}

\medskip

\noindent\textbf{Step 3} We claim that $\Delta u_k$ has exactly $2$ zeroes, $0<\theta_{2,k}<\tilde \theta_{2,k}<2$. Otherwise, considering Step 2, we would either have $\Delta u_k\le 0$, hence $u_k'\le 0$ on $(0,R)$, contradicting \eqref{vpdelta}, or $\Delta u_k(0)>0$, hence $u_k'\ge 0$ in a neighborhood of $0$ and then with \eqref{w'} and \eqref{eq-2} we see that $\theta_{1,k}\to 0$ and $u_k'<0$ on $(\theta_{1,k},R)$, contradicting \eqref{vpdelta}.

\medskip

\noindent\textbf{Step 4} {We claim that $u_k'$ has exactly $2$ zeroes $0<\theta_{1,k}<\tilde \theta_{1,k}<R$, so that \eqref{uk'} is satisfied. Indeed from Step 3 and \eqref{w'} it follows that $u_k'$ has at most $2$ zeroes, but using that $u_k'(1-\delta)>0$, $u_k'(1+\delta)<0$ (which follow from \eqref{convbetak} and \eqref{vpdelta}) and \eqref{supukve} we see that $u_k'$ must have at least $2$ zeroes.}

\medskip

\noindent\textbf{Step 5} {We claim that $u_k(0)\to\infty$ and $S_\varphi=\{|x|=1\}$. The first claim follows from Step 4 and \eqref{supukve}. The second one from Step 4 and \eqref{convbetak}, since if $\{|x|=1\}\cup\{|x|=\rho\}\subset S_\varphi$ for some $1\ne \rho\in (0,R)$, then $u_k'$ would have at least $4$ zeroes in $(0,R)$.}

\medskip


 
\noindent\textbf{Step 6} { To conclude it remains to observe that $\tilde\theta_{1,k}\to1$, which easily follow from Step 5, and that $u_k\to-\infty$ uniformly in $(\theta_{1,k}, 1-\ve)\cup (1+\ve,R)$ for every $\ve>0$, which follows from $\vp(r)<0$ on $(0,R)\setminus\{1\}$ and from \eqref{uk'}. }
\end{proof}

We have therefore proven that only the 4 given cases in Theorem \ref{thm-2} can occur. In the next subsections we shall focus on case $iv)$ and prove \eqref{ukbetak}, \eqref{betakuk(0)}, \eqref{etaklog} and \eqref{limcurv}.

\begin{figure}
\caption{The graphs of $u_k$, $\Delta u_k$ and $\Delta^2 u_k$ in the case $S_1=\{0\}$, $S_\varphi=\{|x|=1\}$.}\label{graphs}
 \includegraphics[width=7.5cm]{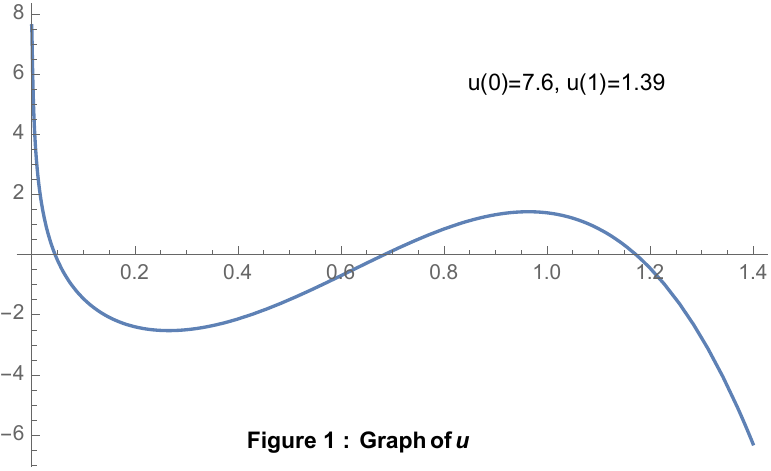}  \rule{1cm}{0cm}   \includegraphics[width=7.5cm]{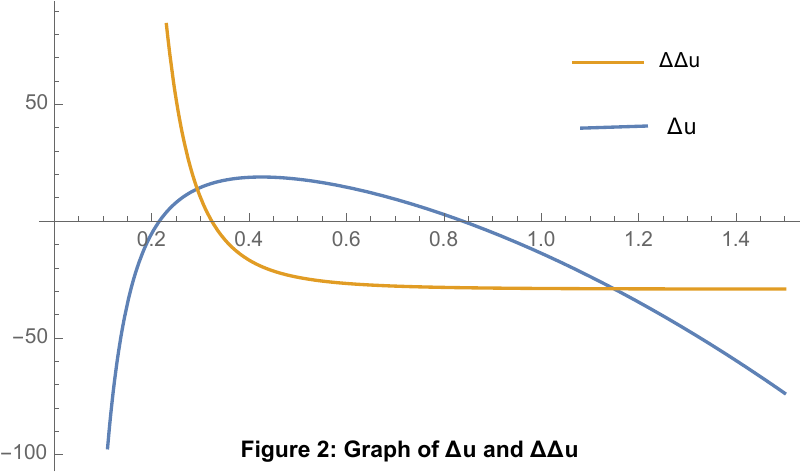} 
\end{figure}

\subsection{Proof of \eqref{ukbetak}, \eqref{betakuk(0)} and \eqref{etaklog}}

We shall now assume that we are in case $iv)$ of Theorem \ref{thm-2}, i.e. $S_1=\{0\}$ and $S_\varphi=\{|x|=1\}.$

\begin{lem}\label{stimeDelta} We have
$$\D^2 u_k(r)\leq \frac{C}{r^4} \quad \text{for }r\in (0,R)$$
and
$$0<-\D u_k(r)\leq \frac{C}{r^2} \quad \text{for }r\in (0,\theta_{2,k}),$$
where $\theta_{2,k}$ is as in Lemma \ref{monotone}. 
\end{lem}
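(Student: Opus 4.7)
The strategy is a direct integration argument using Lemma~\ref{monotone}, the identity \eqref{w'}, and the uniform volume/potential bounds. The key observation is that $\Delta^2 u_k$ vanishes at $\theta_{4,k}$ and $\Delta u_k$ vanishes at $\theta_{2,k}$, so one can integrate backwards from these zeroes to $r$, using the $L^1$ bound on $V_k e^{6u_k}$ to control the derivatives through \eqref{w'}.

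For the first estimate I would combine \eqref{eq-1} with \eqref{w'} applied to $w=\Delta^2 u_k$:
\[
(\Delta^2 u_k)'(s)=\frac{1}{\omega_5 s^5}\int_{B_s}\Delta^3 u_k\,dx=-\frac{1}{\omega_5 s^5}\int_{B_s}V_k e^{6u_k}\,dx\ge -\frac{C}{s^5},
\]
where $C$ depends only on $\|V_k\|_{L^\infty}$ and the bound \eqref{eq-2}. Since $\Delta^2 u_k(\theta_{4,k})=0$, for $r\in(0,\theta_{4,k}]$ integration from $r$ to $\theta_{4,k}$ yields $\Delta^2 u_k(r)\le C\bra{r^{-4}-\theta_{4,k}^{-4}}/4\le C/r^4$. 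On $(\theta_{4,k},R)$ the bound is immediate from \eqref{D2uk}.

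For the second estimate, the positivity $-\Delta u_k(r)>0$ on $(0,\theta_{2,k})$ already follows from Lemma~\ref{monotone}: $(\Delta u_k)'>0$ on $(0,\theta_{3,k})\supset(0,\theta_{2,k})$ and $\Delta u_k(\theta_{2,k})=0$ force $\Delta u_k<0$ on $(0,\theta_{2,k})$. For the upper bound I would plug the first estimate into \eqref{w'} applied to $w=\Delta u_k$:
\[
(\Delta u_k)'(r)=\frac{1}{\omega_5 r^5}\int_{B_r}\Delta^2 u_k\,dx\le \frac{1}{r^5}\int_0^r s^5\bra{\Delta^2 u_k(s)}_+\,ds\le \frac{C}{r^5}\int_0^r s\,ds\le \frac{C}{r^3},
\]
which is integrable at $s=0$. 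Since $\Delta u_k(\theta_{2,k})=0$, integrating from $r$ to $\theta_{2,k}$ gives $-\Delta u_k(r)\le C/r^2$.

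There is no serious obstacle here, since the whole argument reduces to one application of the radial formula \eqref{w'} at each level together with the integration of explicit singular kernels. The only small point to be careful about is that if $\theta_{4,k}<\theta_{2,k}$ the integrand $\Delta^2 u_k(s)$ is not one-signed on $(0,r)$ for $r$ close to $\theta_{2,k}$; this is harmless because only the upper bound is needed and it is obtained by passing to the positive part.
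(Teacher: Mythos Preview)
Your proof is correct and follows essentially the same approach as the paper: integrate $(\Delta^2 u_k)'$ from $r$ to its zero $\theta_{4,k}$ using the $L^1$ bound on $V_k e^{6u_k}$, then feed the resulting estimate $\Delta^2 u_k\le C r^{-4}$ into \eqref{w'} for $\Delta u_k$ and integrate from $r$ to $\theta_{2,k}$. The only cosmetic difference is that you explicitly pass to the positive part $(\Delta^2 u_k)_+$ to handle the possible sign change at $\theta_{4,k}$, whereas the paper simply uses the pointwise upper bound $\Delta^2 u_k(y)\le C|y|^{-4}$ directly inside the ball integral.
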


\begin{proof} Using \eqref{w'} and the fundamental theorem of calculus we write for $|x|\leq \theta_{4,k}$
\[\begin{split}
\D^2 u_k(x)&= \int_{|x|}^{\theta_{4,k}}\frac{1}{\omega_5r^5}\int_{B_r}V_k(y)e^{6u_k(y)}dydr\\
&\leq C \int_{|x|}^{\theta_{4,k}}\frac{1}{r^5}dr\leq \frac{C}{|x|^4},
\end{split}\] 
and for $|x|>\theta_{4,k}$ the inequality is obvious. This yields for $|x|\leq \theta_{2,k}$
\begin{align*}
 -\D u_k(x)=\int_{|x|}^{\theta_{2,k}}\frac{1}{\omega_5r^5}\int_{B_r}\D^2u_k(y)dydr\leq \frac{C}{|x|^2}.
\end{align*}
\end{proof}

The following estimates can be seen as an extention of Lemma 3.5 in \cite{Rob}, whose method of proof goes back to \cite{RS}.
 
 \begin{lem}\label{derivative-uk}
 Let $0<\delta<1$ be fixed. Then there exists $C=C(\delta)>0$ ($C(\delta)$ can be made independent of  $\delta$ if $\delta$ lies in a compact subset of $[0,1)$)  such that  
  \begin{itemize}
  
   \item[i)] $re^{u_k(r)}\leq C$ for every $0\leq r\leq\delta$.  
  \item[ii)] $  |(\D u_k)'(r)-\frac r6\D^2u_k(\delta)|\leq \frac{C}{r^3},\quad 0< r\leq\delta.$
\item[iii)] $\left| u_k'(r)+\delta^2\frac{\D^2u_k(\delta)}{72}r-\frac{\D^2u_k(\delta)}{96}r^3-\frac{\D u_k(\delta)}{6}r\right| \leq \frac{C}{r} ,\quad 0< r\leq\delta.$ 

 \end{itemize}
\end{lem}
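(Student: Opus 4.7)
My overall plan is to apply the radial identity $w'(r)=\tfrac{1}{\omega_5 r^5}\int_{B_r}\Delta w\,dy$ successively to $\Delta^2 u_k$, $\Delta u_k$ and $u_k$, combining the uniform volume bound \eqref{eq-2}, the pointwise estimates of Lemma \ref{stimeDelta}, and the monotonicity picture of Lemma \ref{monotone}.

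For (i), the key point is that by \eqref{uk'} and the fact that $\theta_{1,k}\to 1$, for every fixed $\delta<1$ and all $k$ sufficiently large one has $\delta\le\theta_{1,k}$, so $u_k$ is monotone decreasing on $(0,\delta)$. Then $e^{6u_k(s)}\ge e^{6u_k(r)}$ for $s\le r$, and \eqref{eq-2} rewritten in spherical coordinates gives
\[
\frac{r^6}{6}\,e^{6u_k(r)}\le\int_0^r s^5 e^{6u_k(s)}\,ds\le \frac{C}{\omega_5},
\]
whence $re^{u_k(r)}\le C$ with $C$ depending only on the $L^1$-bound (in particular uniform for $\delta$ in any compact subset of $[0,1)$); the finitely many remaining indices $k$ are absorbed in $C(\delta)$ by continuity of $r\mapsto re^{u_k(r)}$ on $[0,\delta]$.

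For (ii), I split $(\D u_k)'(r)=\tfrac{1}{r^5}\int_0^r s^5\D^2 u_k(s)\,ds$ by writing $\D^2 u_k(s)=\D^2 u_k(\delta)+(\D^2 u_k(s)-\D^2 u_k(\delta))$. The constant piece integrates explicitly to $\tfrac{r}{6}\D^2 u_k(\delta)$. For the remainder, integrating the identity $(\D^2 u_k)'(t)=-\tfrac{1}{\omega_5 t^5}\int_{B_t}V_k e^{6u_k}\,dy$ from $s$ to $\delta$ and using the uniform $L^1$-bound on $V_k e^{6u_k}$ yields $|\D^2 u_k(s)-\D^2 u_k(\delta)|\le C/s^4$, so the error is controlled by $\tfrac{C}{r^5}\int_0^r s\,ds=O(1/r^3)$. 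For (iii), integrating (ii) from $s$ to $\delta$ gives $\D u_k(s)=\D u_k(\delta)-\tfrac{\D^2 u_k(\delta)}{12}(\delta^2-s^2)+O(1/s^2)$, and substitution into $u_k'(r)=\tfrac{1}{r^5}\int_0^r s^5\D u_k(s)\,ds$ with the elementary integrals $\int_0^r s^5\,ds=r^6/6$ and $\int_0^r s^7\,ds=r^8/8$ reconstructs the three polynomial terms on the left-hand side of (iii), while the error contributes $\tfrac{1}{r^5}\cdot O(r^4)=O(1/r)$.

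I expect part (i) to be the main obstacle: (ii) and (iii) are essentially bookkeeping once the iteration formula and the uniform $L^1$-bound are in place, whereas (i) crucially relies on the monotonicity of $u_k$ on $(0,\delta)$, which is a nonlocal fact obtained from the global picture of Lemma \ref{monotone} and, in particular, from $\theta_{1,k}\to 1$.
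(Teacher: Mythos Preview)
Your argument for part (i) rests on the claim that $\theta_{1,k}\to 1$, so that for $k$ large $u_k$ is monotone decreasing on all of $(0,\delta)$. This is not what Lemma \ref{monotone} gives: it is $\tilde\theta_{1,k}$ that converges to $1$, while no limit for $\theta_{1,k}$ is asserted there (and in fact $\theta_{1,k}\to 0$, as established later in Corollary \ref{corthetaik}). By \eqref{uk'} the function $u_k$ is decreasing on $(0,\theta_{1,k})$ but \emph{increasing} on $(\theta_{1,k},\tilde\theta_{1,k})$, so for $k$ large the interval $(0,\delta)$ will contain a portion on which $u_k$ is increasing, and your volume--monotonicity inequality $\tfrac{r^6}{6}e^{6u_k(r)}\le \int_0^r s^5 e^{6u_k(s)}\,ds$ fails there. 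The fix is the one the paper uses: keep your argument on $(0,\theta_{1,k})$, where it is valid verbatim, and on $(\theta_{1,k},\delta)$ invoke instead the conclusion of Lemma \ref{monotone} that $u_k\to -\infty$ uniformly on $(\theta_{1,k},1-\ve)$, so that $re^{u_k(r)}\le \delta\,e^{u_k(r)}\to 0$ uniformly. With this splitting part (i) is complete.

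Your proofs of (ii) and (iii), on the other hand, are correct and in fact more elementary than the paper's. The paper writes the Green representation on $B_\delta$ and estimates the double convolution $\int G(x,y)G(y,z)\,dy$ pointwise, which requires the decomposition of $B_\delta$ into three annular regions and the bound (i) to control the near-diagonal piece. Your approach avoids all of this: you iterate the radial identity $w'(r)=\tfrac{1}{r^5}\int_0^r s^5\Delta w(s)\,ds$ directly, using only the uniform $L^1$-bound on $V_k e^{6u_k}$ to get $|\Delta^2 u_k(s)-\Delta^2 u_k(\delta)|\le C s^{-4}$, and then integrate twice. This is cleaner and, notably, never uses part (i) in the proofs of (ii) and (iii), whereas the paper's Green-function estimate does invoke (i) explicitly.
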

\begin{proof}
  Since $u_k\to-\infty$ uniformly in $(\theta_{1,k},\delta)$, we have $i)$ for $\theta_{1,k}\leq r\leq\delta$. For $0<r<\theta_{1,k}$ we use that $u_k$ is monotone decreasing. Indeed,
 $$r^6e^{6u_k(r)}\leq C\int_{B_{r}}e^{6u_k(y)}dy\leq C.$$
 To prove $ii)$ and $iii)$ we use Greens representation formula. We have 
 \begin{align} \label{25}
  \D u_k(x)&=-\int_{B_\delta}G(x,y)\Delta^2u_k(y)dy+\Delta u_k(\delta)\notag\\
  &=-\int_{B_\delta}V_ke^{6u_k(z)}\left(\int_{B_\delta}G(x,y)G(y,z)dy\right)dz-\D^2u_k(\delta)\int_{B_\delta}G(x,y)dy+\D u_k(\delta)\notag\\
  &=-\int_{B_\delta}V_ke^{6u_k(z)}\left(\int_{B_\delta}G(x,y)G(y,z)dy\right)dz-\D^2u_k(\delta)\frac{\delta^2-|x|^2}{12}+\D u_k(\delta),
 \end{align}
where $G$ is the Green function for $-\D$ on $B_\delta$ with Dirichlet boundary condition. Hence, together with $i)$
  \begin{align}
  \left|(\D u_k)'(r)-\frac r6\D^2u_k(\delta)\right|&\leq C\int_{B_\delta}e^{6u_k(z)}\left(\int_{B_\delta}|\nabla_xG(x,y)|G(y,z)dy\right)dz,\quad r:=|x|\notag\\
  &\leq C\int_{B_\delta}e^{6u_k(z)}\int_{B_\delta}\frac{dy}{|x-y|^5|y-z|^4}dz   \notag\\ 
  &\leq C\int_{B_\delta}e^{6u_k(z)}\frac{1}{|x-z|^3}dz .  \notag 
 \end{align}
 For $x\neq 0$ we split the domain $B_\delta$ into 
$$B_\delta=\cup_{i=1}^3A_i,\quad A_1:=B_\frac{|x|}{2},\quad A_2={(B_{2|x|}\cap B_\delta)}\setminus A_1,\quad A_3:=B_\delta\setminus(A_1\cup A_2).$$ Using that $$\frac{1}{|x-z|}\leq \frac{2}{|x|} \quad \text{for every }z\in A_1\cup A_3,$$ and together with $i)$, we bound 
 $$ \int_{B_\delta}e^{6u_k(z)}\frac{1}{|x-z|^3}dz\leq \frac{C}{|x|^3}\int_{A_1\cup A_3}e^{6u_k(z)}dz+C\int_{A_2}\frac{1}{|z|^6}\frac{1}{|x-z|^3}dz\leq\frac{C}{|x|^3}. $$
{This proves $ii)$.} From the identity \eqref{w'} and by \eqref{25} one gets
\begin{align*}
&\left| u_k'(r)+\delta^2\frac{\D^2u_k(\delta)}{72}r-\frac{\D^2u_k(\delta)}{96}r^3-\frac{\D u_k(\delta)}{6}r\right| \\
&\leq \frac{1}{\omega_5r^5}\int_{B_r}\int_{B_\delta}V_ke^{6u_k(z)}\left(\int_{B_\delta}G(x,y)G(y,z)dy\right)dzdx \\
&\leq \frac{C}{r^5}\int_{B_r}\int_{B_\delta}e^{6u_k(z)}\frac{1}{|x-z|^2}dzdx \\
&\leq \frac{C}{r^5}\int_{B_r}\frac{1}{|x|^2}dx \\
&\leq \frac{C}{r},
\end{align*}
{hence also $iii)$ is proven.}
\end{proof}

\paragraph{Proof of \eqref{ukbetak} (completed).} 

By assumptions $\frac{u_k}{\beta_k}\to \varphi$ in $C^5_{\loc}(B_R\setminus (\{0\}\cup\{|x|=1\}))$.
We know that
   $\varphi\in C_{rad}^\infty(B_R\setminus \{0\})$ solves the ODE $$ \Delta^3\varphi=0 \quad \text{in }(0,R),\quad  \vp(1)=0 \quad \vp'(1)=0,\quad \vp\not\equiv 0\quad \text{and }\vp\leq 0.$$ 
Therefore, $\vp$ is of the form 
 \begin{align}
  \varphi(r)=c_1+c_2r^2+c_3r^4+\frac{c_4}{r^4}+\frac{c_5}{r^2}+c_6\log r,\label{phi}
 \end{align}
 for some constants $c_0,\dots, c_6$.
This yields $$ \vp'(r)+\delta^2\frac{\D^2\vp(\delta)}{72}r-\frac{\D^2\vp(\delta)}{96}r^3-\frac{\D \vp(\delta)}{6}r=-\frac{4c_4}{r^5}-\frac{2c_5}{r^3}+\frac{c_6}{r}+\frac{2c_5r}{3\delta^4}-\frac{8c_5r}{9\delta^2}+\frac{c_6r^3}{6\delta^4}.$$ 
Dividing by $\beta_k$ in $iii)$ of Lemma \ref{derivative-uk} and using \eqref{convbetak}
we obtain $$-\frac{4c_4}{r^5}-\frac{2c_5}{r^3}+\frac{c_6}{r}+\frac{2c_5r}{3\delta^4}-\frac{8c_5r}{9\delta^2}+\frac{c_6r^3}{6\delta^4}\equiv0,$$ which implies that $c_4=c_5=c_6=0$.   Now the condition $\vp(1)=\vp'(1)=0$  gives  $c_1=c_3$ and $c_2=-2c_3$, that is, $\vp(r)=c_3(1-r^2)^2$. Since $\varphi\le 0$ we must have $c_3<0$, and up to replacing $\beta_k$ with $|c_3|\beta_k$ we obtain $c_3=-1$.

It remains to prove that the convergence in \eqref{ukbetak} holds in $B_R\setminus\{0\}$ (and not just in $B_R\setminus (\{0\}\cup\{|x|=1\})$). It follows easily from the monotonicity of $\Delta^2 u_k$ and from $\frac{\Delta^2 u_k}{\beta_k}\to \Delta^2\varphi$ uniformly locally in $B_R\setminus (\{0\}\cup\{|x|=1\})$ that
$$\frac{\Delta^2 u_k}{\beta_k}\to \Delta^2\varphi\quad \text{uniformly locally in }B_R\setminus \{0\}.$$ 
Then using   \eqref{int-repre} we obtain 
$$\frac{\Delta u_k}{\beta_k}\to \Delta \varphi,\quad \frac{u_k}{\beta_k}\to\varphi\quad \text{uniformly locally in }B_R\setminus \{0\},$$ 
and using \eqref{eq-1} and \eqref{w'} again, we also infer the $C^5$-convergence claimed in \eqref{ukbetak}.
\hfill$\square$

\medskip

In the following $\beta_k\to\infty$ is such that we have \eqref{ukbetak}. The following is a simple consequence of Lemma \ref{monotone} and \eqref{ukbetak}.

\begin{cor}\label{corthetaik} We have $\theta_{i,k}=o(1)$ for $i=1,\dots,4$, where $\theta_{i,k}$ is as in Lemma \ref{monotone}.
\end{cor}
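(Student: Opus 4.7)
The approach is to use the explicit form of the limit $\varphi(r)=-(1-r^2)^2$ appearing in \eqref{ukbetak}, together with the sign information from Lemma~\ref{monotone}, to rule out any subsequential limit $\theta_i^*>0$ of $\theta_{i,k}$.

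First I would record that \eqref{ukbetak} yields uniform local convergence of $u_k'/\beta_k$, $\Delta u_k/\beta_k$, $(\Delta u_k)'/\beta_k$ and $\Delta^2 u_k/\beta_k$ to the corresponding derivatives of $\varphi$ on compact subsets of $(0,R)$, since the convergence is in $C^5_{\loc}(B_R\setminus\{0\})$. A quick computation in $\R^6$ using the radial formula $\Delta f=f''+\tfrac{5}{r}f'$ gives $\varphi'(r)=4r(1-r^2)$, $\Delta\varphi(r)=24-32r^2$, $(\Delta\varphi)'(r)=-64r$ and $\Delta^2\varphi\equiv -384$. The key observation is that $\Delta^2\varphi(r)$ and $(\Delta\varphi)'(r)$ are strictly negative for every $r>0$, while $\varphi'(r)$ is strictly positive on $(0,1)$.

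For $\theta_{3,k}$ and $\theta_{4,k}$ a direct contradiction argument then works. Fixing any $r\in(0,R)\setminus\{1\}$ and passing to the limit, the strict negativity $(\Delta\varphi)'(r)<0$, respectively $\Delta^2\varphi(r)<0$, forces $(\Delta u_k)'(r)<0$, respectively $\Delta^2 u_k(r)<0$, for $k$ large; combined with \eqref{Duk'} and \eqref{D2uk} this gives $\theta_{3,k}<r$ and $\theta_{4,k}<r$ eventually, and since $r>0$ is arbitrary, both $\theta_{3,k}$ and $\theta_{4,k}$ tend to $0$ (alternatively, $\theta_{4,k}<\theta_{3,k}$ takes care of $\theta_{4,k}$ for free). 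For $\theta_{1,k}$ the argument is analogous but requires a little care, since $\varphi'$ vanishes not only at $0$ but also at $1$: fixing $r\in(0,1)$, the sign $\varphi'(r)>0$ forces $u_k'(r)>0$ for $k$ large, which by \eqref{uk'} means $r\in(\theta_{1,k},\tilde\theta_{1,k})$ eventually; since $r$ is arbitrary in $(0,1)$, $\theta_{1,k}\to 0$. Finally, $\theta_{2,k}<\theta_{1,k}$ gives $\theta_{2,k}\to 0$.

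The argument is essentially mechanical; the only small subtlety is in handling $\theta_{1,k}$, where one cannot let $r$ range freely in $(0,R)$ because $\varphi'(1)=0$, and the choice of $r$ must be restricted to $(0,1)$ (using additionally that $\tilde\theta_{1,k}\to 1$ is what prevents $\theta_{1,k}$ from accumulating at $1$ instead of $0$).
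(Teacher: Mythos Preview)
Your proof is correct and is precisely the ``simple consequence of Lemma~\ref{monotone} and \eqref{ukbetak}'' that the paper alludes to without spelling out. The paper gives no further details, so your argument---using the explicit signs of $\varphi'$, $(\Delta\varphi)'$, $\Delta^2\varphi$ together with the sign patterns \eqref{D2uk}--\eqref{uk'} and the ordering $\theta_{2,k}<\theta_{1,k}$, $\theta_{4,k}<\theta_{3,k}$---is exactly the intended one. The exclusion of $r=1$ in the step for $\theta_{3,k},\theta_{4,k}$ is in fact unnecessary since \eqref{ukbetak} already gives $C^5_{\loc}$ convergence on all of $B_R\setminus\{0\}$, but this over-caution is harmless.
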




\begin{lem}\label{lemetaklog}
Let $(u_k)$ be radial solutions to \eqref{eq-1}-\eqref{eq-2} in $\Omega= B_{R_0}$, $R_0>0$, with $V_k$ as in Theorem \ref{thm-2}, satisfying
\begin{equation}\label{ukmax12}
u_k(x)\leq u_k(0)\quad\text{ on }B_\varepsilon,
\end{equation}
for some $\varepsilon\in (0,R_0)$.
Assume further that $u_k(0)\to\infty$, $\Delta u_k(0)\le 0$ and that there are $\alpha_k\to\infty$, $\tau\in (0,R_0)$ and constants $C_1$ and $C_2$ (depending on $\tau$) such that
\begin{align}
\Delta u_k(\tau)&=\alpha_k(C_1+o(1)) \label{Dukdelta}\\
\Delta^2u_k(\tau) &=\alpha_k(C_2+o(1)) \label{D2ukdelta},
\end{align}
and
\begin{equation}\label{DD2ukdelta}
C_3:=C_1-\frac{C_2\tau^2}{12}>0.
\end{equation}
Then for $r_k:=2e^{-u_k(0)}$ we have $\alpha_k r_k^2=o(1)$ and \eqref{etaklog} holds.
\end{lem}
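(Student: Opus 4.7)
The plan is to rescale by $r_k=2e^{-u_k(0)}$ around the origin, pass to a limit $\eta_\infty$ solving the entire $Q$-curvature equation on $\R^6$, invoke the classification of Theorem \ref{thmclas} to identify $\eta_\infty$ as the spherical bubble $\eta$, and then extract $\alpha_k r_k^2=o(1)$ from the asymptotic behaviour of $\Delta\eta_\infty$ at infinity.

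First I would set up $\eta_k(x):=u_k(r_kx)+\log r_k$ and record the basic properties: $\eta_k(0)=\log 2$, while the hypothesis $u_k\le u_k(0)$ on $B_\varepsilon$ gives $\eta_k\le\log 2$ on $B_{\varepsilon/r_k}$, $\Delta u_k(0)\le 0$ gives $\Delta\eta_k(0)\le 0$, and Lemma \ref{derivative-uk} i) applied at a fixed $\delta\in(0,R_0)$ rescales to $|x|e^{\eta_k(x)}\le C$ on $B_{\delta/r_k}$. In particular $(-\D)^3\eta_k=V_k(r_k\cdot)e^{6\eta_k}\le C\min(1,|x|^{-6})$ uniformly. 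Standard elliptic regularity for $\D^3$, combined with these pointwise bounds and $\eta_k(0)=\log 2$, then yields uniform $C^{5,\alpha}_{\loc}(\R^6)$ bounds on $\eta_k$ and extraction of a subsequence $\eta_k\to\eta_\infty$ in $C^5_{\loc}(\R^6)$, where $\eta_\infty$ is a radial solution of $(-\D)^3\eta_\infty=120\,e^{6\eta_\infty}$ with $\eta_\infty(0)=\log 2$ a global maximum, $\D\eta_\infty(0)\le 0$, and $\int_{\R^6}e^{6\eta_\infty}<\infty$.

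Next I would read off the behaviour of $\D\eta_\infty$ and $\D^2\eta_\infty$ at infinity by pulling the estimates of Lemma \ref{derivative-uk} ii) back to the rescaled variable. Integrating $|(\D u_k)'(r)-\tfrac{r}{6}\D^2u_k(\tau)|\le C/r^3$ between $r_kM$ and $\tau$ and inserting \eqref{Dukdelta}, \eqref{D2ukdelta}, one obtains, for every fixed $|x|=M$,
\[
\D\eta_k(x)=r_k^2\D u_k(r_kM)=r_k^2\alpha_k\bigl(C_3+o(1)\bigr)+O(1/M^2).
\]
Since the LHS converges to the finite value $\D\eta_\infty(x)$, the sequence $r_k^2\alpha_k$ must be bounded; along a further subsequence $r_k^2\alpha_k\to\gamma\in[0,\infty)$, and $\D\eta_\infty(x)\to\gamma C_3$ as $|x|\to\infty$. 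A parallel computation using $(\D^2u_k)'(s)=-\omega_5^{-1}s^{-5}\int_{B_s}V_ke^{6u_k}\,dy$ and the change of variable $s=r_kt$ gives $\D^2\eta_\infty(x)=O(1/M^4)$ at $|x|=M$, so $\D^2\eta_\infty\to 0$ at infinity.

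Finally I would apply Theorem \ref{thmclas}: $\eta_\infty$ is either spherical or of the form $v+p$ with $\D^j v\to 0$ and $p$ an upper bounded radial polynomial of degree $2$ or $4$. A degree-$4$ upper bounded $p$ forces $\D^2 p$ to be a nonzero negative constant, contradicting $\D^2\eta_\infty\to 0$. A degree-$2$ upper bounded $p$ forces $\D p<0$, contradicting $\D\eta_\infty\to\gamma C_3\ge 0$; this is precisely where the hypothesis $C_3>0$ (combined with $\gamma\ge 0$) is essential. Hence $\eta_\infty$ is spherical, and $\eta_\infty(0)=\log 2$ with $0$ maximal pins down $\eta_\infty=\eta$. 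The spherical asymptotics $\D\eta\to 0$ then force $\gamma C_3=0$, hence $\gamma=0$ and $\alpha_kr_k^2=o(1)$, and uniqueness of the limit upgrades convergence to the full sequence, giving \eqref{etaklog}. The main technical obstacle will be the compactness step: converting the pointwise bound $|x|e^{\eta_k}\le C$ into genuine $C^{5,\alpha}_{\loc}$ compactness of $\eta_k$ and preventing escape to $-\infty$ on compact sets, which relies on the integral identities of Section 2 and the representation formula for $\D^3$.
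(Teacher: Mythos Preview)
Your overall strategy---rescale, pass to a limit, classify, read off $\alpha_kr_k^2\to 0$---is the paper's strategy, but there is a genuine gap in the order of the argument. You first assert $C^{5,\alpha}_{\loc}$ compactness of $\eta_k$ from ``standard elliptic regularity'' using only $\eta_k\le\log 2$ and a bound on $(-\Delta)^3\eta_k$, and then use the convergence $\Delta\eta_k(M)\to\Delta\eta_\infty(M)$ to conclude that $r_k^2\alpha_k$ is bounded. But the upper bound $\eta_k\le\log 2$ together with a bound on $\Delta^3\eta_k$ is \emph{not} enough for compactness: nothing prevents $\Delta\eta_k(0)=r_k^2\Delta u_k(0)$ or $\Delta^2\eta_k(0)$ from diverging to $-\infty$ (think of $\eta_k(r)=\log 2-kr^2$). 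You need an a priori $L^1_{\loc}$ bound on $\Delta\eta_k$, and your own formula $\Delta\eta_k(M)=r_k^2\alpha_k(C_3+o(1))+O(M^{-2})$ shows that this bound is \emph{equivalent} to the boundedness of $r_k^2\alpha_k$---so the argument is circular. The paper breaks the circularity by first proving $r_k^2\alpha_k$ bounded via contradiction: assuming $r_k^2\alpha_k\to\infty$, one rescales a second time, setting $\tilde\eta_k:=\eta_k/(r_k^2\alpha_k)$; now the integral estimate gives $\int_{B_R}|\Delta\tilde\eta_k-C_3|\,dx\to 0$, $\Delta^3\tilde\eta_k\to 0$, and $\tilde\eta_k\le \log 2/(r_k^2\alpha_k)\to 0$, so elliptic estimates plus Harnack give $\tilde\eta_k\to\tilde\eta$ with $\Delta\tilde\eta\equiv C_3>0$, contradicting $\Delta\tilde\eta_k(0)\le 0$. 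This is exactly where the sign hypothesis $\Delta u_k(0)\le 0$ is used, and it must be used \emph{before} compactness, not after. Once $r_k^2\alpha_k$ is bounded, your remaining steps (classification via Theorem~\ref{thmclas}, ruling out degree $2$ and $4$ polynomial parts, pinning down $\eta_\infty=\eta$, deducing $\gamma=0$) are essentially the paper's, and are fine.

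A secondary point: you invoke Lemma~\ref{derivative-uk} i) and ii), but those are proved using the monotonicity structure from Lemma~\ref{monotone} (in particular $re^{u_k(r)}\le C$ uses that $u_k$ is decreasing on $(0,\theta_{1,k})$ and $u_k\to-\infty$ beyond), which is \emph{not} among the hypotheses of Lemma~\ref{lemetaklog}. The paper avoids this by working directly with the Green's representation \eqref{25}, which only needs the volume bound \eqref{eq-2} and yields the $L^1$ estimate $\int_{B_R}|\Delta\eta_k-r_k^2\alpha_k(C_3+o(1))|\,dx\le Cr_k^4\alpha_kR^8+CR^4$. This integral form is what makes both the contradiction step and the subsequent compactness step go through under the minimal hypotheses stated.
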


\begin{proof} From \eqref{ukmax12} we infer
\begin{equation}\label{boundetak}
\eta_k\le \eta_k(0)=\log 2 \quad \text{for }|x|\le \frac{\varepsilon}{r_k},
\end{equation}
hence
\begin{equation}\label{boundDetak}
|(-\Delta)^3\eta_k(x)|\le C  \quad \text{for }|x|\le \frac{\varepsilon}{r_k}.
\end{equation}
We now want to use \eqref{boundDetak} together with elliptic estimates applied to the function $\Delta \eta_k$ and then to $\eta_k$.
With $\tau \in (0,1)$ fixed such that \eqref{Dukdelta}-\eqref{DD2ukdelta} hold,  we obtain from \eqref{25}
 \begin{align*}
 &\D \eta_k(x)+r_k^2\left(\frac{\D^2u_k(\tau)\tau^2}{12}-\D u_k(\tau)\right)\\&=r_k^4\frac{\D^2u_k(\tau)|x|^2}{12}-r_k^2\int_{B_\tau}e^{6u_k(z)}V_k(z)\int_{B_\tau}G(r_kx,z)G(y,z)dydz. 
 \end{align*}
and integrating on $B_R$
 \begin{align}
 \int_{B_R} |\D \eta_k(x)-r_k^2\alpha_k(C_3+o(1))|dx&\leq Cr_k^4\alpha_kR^8+C\int_{B_R}\int_{B_\tau}e^{6u_k(z)}V_k(z)\frac{r_k^2}{|r_kx-z|^2}dzdx\notag\\
 &\leq Cr_k^4\alpha_kR^8+CR^4.\label{boundD1etak}
 \end{align}

We now claim that $\limsup_{k\to\infty} r_k^2\alpha_k<\infty$.
Indeed, assume by contradiction that for a subsequence $r_k^2\alpha_k\to\infty$. Set $\tilde\eta_k:=\frac{\eta_k}{r_k^2\alpha_k}$. Then by \eqref{boundDetak}, \eqref{boundD1etak} and elliptic estimates $\Delta \tilde \eta_k$ is uniformly bounded in $L^\infty_{\loc}(\R^6)$, and using \eqref{boundetak} and the Harnack inequality one has $\tilde\eta_k\to\tilde\eta$ in $C^5_{\loc}(\R^6)$ where $\tilde\eta$ satisfies
$$\D^3\tilde\eta=0\quad\text{in }\R^6,\quad \int_{B_R}|\D \tilde\eta(x)-C_3|dx=0.$$
This shows that $\D\tilde\eta(0)=C_3>0$, which contradicts $\D u_k(0)\leq 0$. This proves our claim.

Now, up to a subsequence we set $a:=\lim_{k\to\infty}r_k^2\alpha_k<\infty$. With the same elliptic estimates used for $\tilde\eta_k$ we get $\eta_k\to\eta_\infty$ in $C^5_{\loc}(\R^6)$ where $\eta_\infty$ satisfies 
$$(-\D)^3\eta_\infty=120e^{6\eta_\infty}\quad\text{in }\R^6,\quad \int_{\R^6}e^{6\eta_\infty}dx<\infty. $$ Moreover, 
\begin{equation}\label{eqDeltaeta}
\int_{B_R}|\D \eta_\infty(x)-C_3a|dx\leq CR^4\quad\text{for every }R>0.
\end{equation}
By Theorem \ref{thmclas} we can write $\eta_\infty=v+p$ with $\Delta v(x)\to 0$ as $|x|\to\infty$ and $p$ is a (radially symmetric) upper bounded polynomial of degree at most $4$. In particular $\lim_{|x|\to\infty} \Delta p(x)\le 0$. Since $a\geq 0$, from \eqref{eqDeltaeta} we infer that $a=0$, which is only possible if $\Delta p \equiv 0$, that is, $p$ is constant. By Theorem \ref{thmclas}, also observing that $\eta_\infty(0)=\log 2$, we conclude that   $\eta_\infty=\eta$, so that \eqref{etaklog} is proven. 
\end{proof}

\paragraph{Proof of \eqref{betakuk(0)} and \eqref{etaklog} (completed)}
It follows from Lemma \ref{monotone} and \eqref{ukbetak} that \eqref{ukmax12} holds.
From \eqref{ukbetak} we get \eqref{Dukdelta}-\eqref{DD2ukdelta} with
$$\alpha_k=\beta_k,\quad C_1=24-32\delta^2,\quad C_2=-384, \quad C_3=24,$$
for any $\delta\in (0,1)$, so that \eqref{etaklog} follows at once from Lemma \ref{lemetaklog}.
Moreover, the claim $\alpha_k r_k^2=o(1)$ of Lemma \ref{lemetaklog} is equivalent to \eqref{betakuk(0)}.
\hfill$\square$


\subsection{Proof of \eqref{limcurv}}

 \begin{lem}\label{neck1}
 Let $p\in (1,2)$, $\delta\in (0,1)$ be fixed. Let the assumptions of Lemma \ref{lemetaklog} be in force and additionally assume that there exists $0<\theta_k=o(1)$ such that
 \begin{equation}\label{-uk'}
0\leq -u_k'(r)\leq \frac{C}{r}\quad \text{on }(0,\theta_k)\text{ and} \quad u_k'(r)\geq 0\quad\text{on } (\theta_k, \delta).
\end{equation}
Then for each $k$ large there exists $t_k\in (0,\theta_k]$ such that  following hold:
 \begin{itemize}
 \item[i)] $r^pe^{u_k}$ is monotone decreasing on $(c_pr_k,t_k)$ for some constant $c_p>0$.
 \item[ii)]   $r_k=o(t_k)$ as $k\to\infty$, 
 \item[iii)] $u_k(t_k)\leq u_k(\theta_{k})+C.$
\end{itemize}
Finally, if $u_k(\delta)\to-\infty$, we conclude
\begin{equation}\label{limcurvbis}
\lim_{k\to\infty} \int_{B_\delta}V_k e^{6u_k}dx=\Lambda_1.
\end{equation}
\end{lem}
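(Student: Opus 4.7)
The approach will be to: (a) set $t_k$ as the endpoint of the maximal monotonicity interval of $r^p e^{u_k(r)}$; (b) derive i) and ii) from the convergence $\eta_k \to \eta$ of \eqref{etaklog}; (c) prove iii) by bounding $\theta_k/t_k$ via integral identities for $\Delta u_k$; (d) deduce \eqref{limcurvbis} from a three-piece decomposition of $\int_{B_\delta} V_k e^{6 u_k}\,dx$.

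I would fix $c_p > \sqrt{p/(2-p)}$, which ensures $-s\eta'(s) = 2s^2/(1+s^2) > p$ strictly on $[c_p,\infty)$, and define
\[
t_k := \sup\bigl\{\, t \in (c_p r_k, \theta_k] : -r u_k'(r) \ge p \text{ on } (c_p r_k, t)\,\bigr\}.
\]
Claim i) is then immediate since $(r^p e^{u_k})' = r^{p-1} e^{u_k}(p + r u_k')$ is nonpositive on $(c_p r_k, t_k)$; the supremum is well-defined because $-c_p r_k u_k'(c_p r_k) = -c_p \eta_k'(c_p) \to -c_p \eta'(c_p) > p$ by the $C^5_{\loc}$ convergence. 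For ii), uniform convergence of $\eta_k \to \eta$ on each compact $[c_p, M]$ forces $-r u_k'(r) \ge p$ on $(c_p r_k, M r_k)$ for $k$ large, so $t_k \ge M r_k$; arbitrariness of $M$ gives $r_k = o(t_k)$.

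I expect claim iii) to be the main obstacle. It reduces to showing that $\theta_k/t_k$ is uniformly bounded, since the hypothesis $|u_k'(r)| \le C/r$ on $(0,\theta_k)$ then yields
\[
u_k(t_k) - u_k(\theta_k) = \int_{t_k}^{\theta_k}(-u_k'(r))\,dr \le C\log(\theta_k/t_k) = O(1).
\]
To bound $\theta_k/t_k$ I would exploit the two radial identities $\int_0^{\theta_k} s^5 \Delta u_k(s)\,ds = \theta_k^5 u_k'(\theta_k) = 0$ and $\int_0^{t_k} s^5 \Delta u_k(s)\,ds = -p\,t_k^4$, together with the asymptotic structure of $\Delta u_k$ in the transition zone $r_k \ll r \ll 1$: the bubble contribution behaves like $-8/r^2$ (since $\Delta\eta(x) \to -8/|x|^2$ as $|x|\to\infty$), while the boundary pieces in the Green representation \eqref{25} contribute $\alpha_k(C_3 + o(1)) + O(\alpha_k r^2)$ from the hypotheses of Lemma \ref{lemetaklog} (which also supply $\alpha_k r_k^2 = o(1)$). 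Inserting $\Delta u_k(r) \approx -8/r^2 + \alpha_k C_3$ into the two identities yields $\alpha_k\theta_k^2$ and $\alpha_k t_k^2$ converging to explicit positive constants, so $\theta_k/t_k$ tends to a finite positive limit depending only on $p$ and $C_3$.

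Finally, once i)--iii) are in hand, I would split
\[
\int_{B_\delta} V_k e^{6 u_k}\,dx = \int_{B_{M r_k}} + \int_{B_{t_k}\setminus B_{M r_k}} + \int_{B_\delta \setminus B_{t_k}}.
\]
Rescaling the bubble piece by $y = r_k x$ gives $\int_{B_M} V_k(r_k x) e^{6\eta_k(x)}\,dx \to \int_{B_M} 120\, e^{6\eta}\,dx \to \Lambda_1$ as $M \to \infty$. For the neck piece, i) combined with $u_k(M r_k) = -\log(r_k(1+M^2)/2) + o(1)$ gives $r^p e^{u_k(r)} \le C M^{p-2} r_k^{p-1}$; raising to the sixth power and integrating against $r^{5-6p}\,dr$ produces an $O(M^{-6})$ bound. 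For the outer piece, \eqref{-uk'}, iii), and the hypothesis $u_k(\delta) \to -\infty$ together force $u_k \to -\infty$ uniformly on $[t_k, \delta]$, killing the integrand. Sending $k \to \infty$ first and then $M \to \infty$ completes the proof of \eqref{limcurvbis}.
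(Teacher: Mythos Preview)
Your treatment of i), ii), and the final three-piece decomposition for \eqref{limcurvbis} is essentially the paper's argument.

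The gap is in iii). You assert that in the transition zone $r_k\ll r\ll 1$ one has $\Delta u_k(r)\approx -8/r^2+\alpha_kC_3$, and then read off $\alpha_k\theta_k^2$ and $\alpha_kt_k^2$ from the identities $u_k'(\theta_k)=0$, $u_k'(t_k)=-p/t_k$. The boundary contribution $\alpha_k(C_3+o(1))+O(\alpha_k r^2)$ is indeed what \eqref{25} gives, but the ``bubble'' term $-8/r^2$ is the value of the integral term $-I(x)$ in \eqref{25} \emph{only if essentially all of the mass $V_ke^{6u_k}$ sits at scale $r_k$}. That is exactly what \eqref{limcurvbis} asserts and what iii) is a step towards, so the argument is circular. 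Concretely, the integrated bound \eqref{boundD1etak} (rescaled back) yields $\int_{B_r}|\Delta u_k-\alpha_k C_3|\,dx\le C\alpha_kr^8+Cr^4$, which gives the upper bounds $\alpha_k\theta_k^2\le C$ and $\alpha_k t_k^2\le C$, but no lower bound on $\alpha_k t_k^2$; without that you cannot conclude $\theta_k/t_k$ is bounded.

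The paper avoids this circularity by a contradiction argument that uses only a one-sided mass bound. Assuming $\theta_k/t_k\to\infty$, one rescales at scale $t_k$, setting $\bar u_k(r)=u_k(rt_k)-u_k(t_k)$. Since $t_ke^{u_k(t_k)}\to 0$ (which follows from i)), the equation becomes $(-\Delta)^3\bar u_k=o(1)e^{6\bar u_k}$, and the bound \eqref{-uk'} forces $\bar u_k\to\bar u_\infty$ with $\Delta^3\bar u_\infty=0$, $\bar u_\infty(1)=0$, $\bar u_\infty'(1)=-p$; the radial classification then gives $\bar u_\infty(r)=-p\log r$. Computing $-t_k^5(\Delta^2 u_k)'(t_k)=\tfrac{1}{\omega_5}\int_{B_{t_k}}V_ke^{6u_k}$ on one hand, and $-(\Delta^2\bar u_\infty)'(1)=64p$ on the other, and using only the \emph{lower} bound $\int_{B_{t_k}}V_ke^{6u_k}\ge \Lambda_1+o(1)$ (guaranteed by $r_k=o(t_k)$ and \eqref{etaklog}), one obtains $64p\ge 128$, contradicting $p<2$. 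This avoids any pointwise control of $\Delta u_k$ in the neck.
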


\begin{proof}
{For the proof of $i)$ and $ii)$ we shall follow \cite{Rob}.}
 We set  $c_p=\sqrt{1+\frac{p}{2-p}}$. For any $L>c_p$ and for $r\in (c_pr_k,Lr_k)$, using \eqref{etaklog}, which follows from Lemma \ref{lemetaklog}, we get
 \begin{align*}
  (r^pe^{u_k(r)})'(r)&=\left(pr^{p-1}+r^pu_k'(r)\right)e^{u_k(r)}\\
  &=r^{p-1}\left(p+\rho r_ku_k'(\rho r_k)\right)e^{u_k(r)},\quad \rho:=\frac{r}{r_k}\in (c_p,L)\\
  &=r^{p-1}\left(p+\rho \left(\frac{-2\rho}{1+\rho^2}+o(1)\right)\right)e^{u_k(r)}\\
  &<r^{p-1}\left(\frac{p-2}{1+\rho^2}+o(1)\right)e^{u_k(r)}\\
  & <0,
 \end{align*}
where $o(1)\to 0$ as $k\to\infty $ uniformly on $\rho\in (c_p,L)$. We set 
$$t_k:=\inf \{r\in (c_pr_k,\theta_{k}): (r^pe^{u_k(r)})'(r)=0\}.$$ It is easy to see that $t_k$ is well defined, $r_k=o(t_k)$ and   $r^pe^{u_k(r)}$ is monotone decreasing on $(c_pr_k,t_k)$.   

\medskip

Now we prove $iii)$ in few steps. 


\medskip\noindent\textbf{Step 1} $t_ke^{u_k(t_k)}\to 0$.

It follows from $i)$ that  $re^{u_k}$ is monotone decreasing on $(c_pr_k, t_k)$. Using that $r_k=o(t_k)$ and \eqref{etaklog}
we obtain for any $L>c_p$ and for $k$ large
\begin{align}
t_k e^{u_k(t_k)}\leq Lr_ke^{u_k(Lr_k)}=L\left(\frac{2}{1+L^2}+o(1)\right),\quad o(1)\xrightarrow{k\to\infty}0. \label{uktk}
\end{align}
Taking $k\to\infty$ and then taking $L\to\infty$ one has Step 1. 

\medskip\noindent\textbf{Step 2} There exists $C>0$ such that  $\theta_{k}\leq Ct_k.$

We assume by contradiction that $\frac{\theta_{k}}{t_k}\to \infty$. Then for any $\ve>0$ we have  $u_k'(rt_{k})<0$ for $r\in (\ve,\tfrac1\ve)$ and $k\ge k_0(\ve)$ large, thanks to \eqref{-uk'}.
Then, setting
$$\bar u_k(r)=u_k(rt_k)-u_k(t_k).$$ 
we get for $k\ge k_0(\ve)$
\begin{align}\label{8}
0< -\bar u_k'(r)\leq \frac{C}{r},\quad r\in (\ve, \tfrac1\ve). 
\end{align}
Hence, $\bar u_k\to \bar u_\infty$ in $C^0_{\loc}(0,\infty)$. By \eqref{uktk} we have
$$(-\Delta)^3 \bar u_k(r)=V_k(t_k r)t_k^6e^{6u_k(t_k)}e^{6\bar u_k}=o(1)e^{6\bar u_k}$$
with $o(1)\to 0$ locally uniformly for $r\in [0,\infty)$, thanks to Step 1. Then, by elliptic estimates $\bar u_k\to \bar u_\infty$
also in $C^5_{\loc}(0,\infty)$  
where $\Delta^3 u_\infty\equiv 0$ in $\R^6\setminus\{0\}$
$$|\bar u_\infty (r)|\leq \frac{C}{r},\text{ for }r\in(0,\infty), \quad \bar u_\infty(1)=0,\quad \bar u_\infty'(1)=-p.$$
Since  $\bar u_\infty$ is radial, it is of the form given in \eqref{phi},  and hence $\bar u_\infty (r)=-p\log r$. Using that $r_k=o(t_k)$ and \eqref{etaklog} one has
\begin{align*}
 64p+o(1)=-t_k^5(\D^2u_k)'(t_k)=\frac{1}{\omega_5}\int_{B_{t_k}}V_ke^{6u_k}dx\geq \frac{\Lambda_1}{\omega_5} +o(1)=128+o(1),
\end{align*}
a contradiction as $p<2$.

\medskip\noindent\textbf{Step 3}  $u_k(t_k)\leq u_k(\theta_{k})+C$. 

Since taking $\ve$ sufficiently small \eqref{8} holds for every $r\in (\ve, \tfrac{\theta_{k}}{t_k})$, we have 
\begin{align*}
u_k(t_k)&= u_k(\theta_{k}) -\int_{1}^\frac{\theta_{1,k}}{t_k}\bar u_k'(r)dr\\
&\leq u_k(\theta_{k})+C\log \frac{\theta_{k}}{t_k}\\
&\leq u_k(\theta_{k})+C,
\end{align*}
thanks to Step 2.

\medskip\noindent\textbf{Step 4} \eqref{limcurvbis} holds for $\delta$ such that $u_k(\delta)\to-\infty$.

Since $u_k(\theta_{k})<u_k(\delta)$ for $k$ large, we conclude
$$u_k(t_k)\le u_k(\theta_{k})+C\le u_k(\delta)+C\to -\infty \quad \text{as }k\to\infty.$$
Splitting the domain $B_\delta$ into  $$B_\delta=\cup_{i=1}^3A_i,\quad A_1:=B_{Lr_k},\quad A_2:=B_{t_k}\setminus B_{Lr_k}, \quad A_3:=B_\delta\setminus B_{t_k},$$ we write 
  \begin{align*}
  \int_{B_\delta}V_ke^{6u_k}dx=\sum_{i=1}^3I_i,\quad I_i:=\int_{A_i}V_ke^{6u_k}dx. 
  \end{align*}
Using the monotonicity of $u_k$ we infer
$$\max_{[t_k,\delta]}u_k=\max\{u_k(t_k),u_k(\delta)\}\xrightarrow{k\to\infty}-\infty,$$
which gives $I_3\to0$ as $k\to\infty$. For $L$ large we have $r^pe^{u_k(r)}$ monotone decreasing on $(Lr_k,t_k)$, and hence 
  \begin{align*}
  I_2\leq C\int_{A_2}|x|^{6p}e^{6u_k(x)}\frac{1}{|x|^{6p}}dx\leq C (Lr_k)^{6}e^{6u_k(Lr_k)}\xrightarrow{k\to\infty}C\left(\frac{L}{1+L^2}\right)^6,
\end{align*}
so that
$$\lim_{L\to\infty} \lim_{k\to\infty} I_2=0.$$
Finally   
$$\lim_{L\to\infty} \lim_{k\to\infty} I_1=\Lambda_1$$
by \eqref{etaklog}.
The proof of \eqref{limcurvbis} follows immediately.
\end{proof}

\paragraph{Proof of \eqref{limcurv} (completed).}
In the proof of \eqref{etaklog} we have already verified that the assumptions of Lemma \ref{lemetaklog} are in force. We claim that also \eqref{-uk'} holds with $\theta_k=\theta_{1,k}$, where $\theta_{1,k}$ is given by Lemma \ref{monotone}.

Indeed for $k$ sufficiently large, since $\D \vp(\tfrac12)>0$, \eqref{ukbetak} implies that $\D u_k(\tfrac12)>0$, hence by \eqref{Duk'} we have $\D u_k>0$ on $(\theta_{2,k},\frac 12]$. Together with Lemma \ref{stimeDelta} this gives
$$-\D u_k(r)\leq\frac{C}{r^2} \quad \text{on }(0,\tfrac12],$$
hence $-u_k'\le \frac{C}{r}$ on $(0,\tfrac12]$. Then \eqref{-uk'} follows.

Now observe that \eqref{ukbetak} implies $u_k(\delta)\to-\infty$ for $\delta\in (0,1)$. Then \eqref{limcurv} follow from Lemma \ref{neck1}
 \hfill$\square$


\section{Proof of Theorem \ref{thm-1}}


Let  $V_k$, $V_\infty$ and $P(r):=-ar^2-br^4$ be as in the statement of Theorem \ref{thm-1}. If we can find $\tilde u_k$ satisfying the requests of the theorem with $V_k$ replaced by $\tilde V_k:=V_ke^P$, then $u_k:=\tilde u_k+\frac{P}{6}$ will satisfy the requests of the theorem with the original $V_k$. Therefore there is no loss of generality in assuming that $P\equiv 0$, i.e. $V_\infty'\le 0$. 

Taking $\lambda=\lambda_k\in (0,\frac{1}{24}]$ in Theorem \ref{fixed1} we have that for every $\Lambda>0$ there exists a solution $u_k\in C^5_{rad}(\R^6)$ to  
\begin{align*}
u_k(x)&=v_k-|x|^4+c_k\\
&=\frac{1}{\gamma_6}\int_{\R^6}\log\left(\frac{1}{|x-y|}\right)V_k(y)e^{-6|y|^4}e^{6(v_k(y)+c_k)}dy\\
&\quad +\lambda_k\D v_k(0)(|x|^4-2|x|^2)-|x|^4+c_{k}
\end{align*}
such that
\begin{align}\label{any-vol}
\Lambda=\int_{\R^6}V_k(y)e^{6u_k(y)}dy.
\end{align}
In particular $u_k$ solves the integral equation
\begin{equation}\label{exist}
u_k(x)=\frac{1}{\gamma_6}\int_{\R^6}\log\left(\frac{1}{|x-y|}\right)V_k(y)e^{6u_k(y)}dy+\lambda_k\D u_k(0)(1-|x|^2)^2-|x|^4+\tilde c_{k},
\end{equation}
where $$\tilde{c_k}:=c_k-\lambda_k\D u_k(0).$$
Computing the Laplacian at the origin on both sides of \eqref{exist} yields
\begin{equation}\label{Deltauneg0}
(1+24\lambda_k)\Delta u_k(0)=-\frac{4}{\gamma_6}\int_{\R^6}\frac{V_k(y)e^{6u_k(y)}}{|y|^2}dy,
\end{equation}
hence
\begin{equation}\label{Deltauneg}
\Delta u_k(0)<0.
\end{equation}

We will now prove Theorem \ref{thm-1} by fixing $\Lambda>\Lambda_1$, letting $\lambda_k\to 0^+$. 
The case $\Lambda=\Lambda_1$ can be easily deduced by first taking $\Lambda> \Lambda_1$ and then letting $\Lambda\to \Lambda_1^+$ slowly enough with a diagonal procedure.

The first step will be proving that $\lambda_k\Delta u_k(0)\to -\infty$ (so that $-\lambda_k\Delta u_k(0)$ plays the role of $\beta_k$ from Theorem \ref{ARSM} and Theorem \ref{thm-2}). A crucial tool will be the following Pohozaev-type identity, from \cite[Lemma 2.4]{Wei-Xu} (see also \cite[Theorem 2.1]{Xu}) 
for $w$ solving
$$w(x)=\frac{1}{\gamma_6}\int_{\R^6}\log\left(\frac{|y|}{|x-y|}\right)K(y)e^{6w(y)}dy+ c$$
with $|x\cdot\nabla K(x)|\leq C$, and
$$\alpha:=\int_{\R^6}Ke^{nw}dy,$$
we have
\begin{align*}
\frac{\alpha}{\gamma_6}\left(\frac{\alpha}{\gamma_6}-2\right)=\frac{1}{3\gamma_6}\int_{\R^6}\left(
y \cdot\nabla K(y)\right)e^{6w(y)}dy,
\end{align*}
which, observing that $2\gamma_6=\Lambda_1$, can be recast as
\begin{align}\label{nabla}
\frac{2\alpha}{\Lambda_1}\left(\alpha-\Lambda_1\right)=\frac{1}{3}\int_{\R^6}\left(
y \cdot\nabla K(y)\right)e^{6w(y)}dy.
\end{align}

\begin{lem}\label{vkproperty}
 Let $\Lambda>\Lambda_1$. Let $(u_k)$ be a sequence of radial solutions to \eqref{exist}-\eqref{any-vol} with $V_k$ as in Theorem \ref{thm-1}. Then  $\lambda_k\D u_k(0)\to-\infty$.
\end{lem}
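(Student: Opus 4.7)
I argue by contradiction: assume that, along a subsequence, $\mu_k := -\lambda_k \D u_k(0) \in [0,\infty)$ is bounded with $\mu_k \to L \in [0, \infty)$. The strategy is to apply the Pohozaev-type identity \eqref{nabla} to a suitable rewriting of $u_k$ and then pass to the limit using the quantization of curvature from \cite{Mar2}, exploiting the sign hypothesis $V_\infty'\le 0$ in force after the reduction to $P\equiv 0$.

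Set $p_k(x) := -\mu_k(1-|x|^2)^2 - |x|^4$, a radial polynomial of degree $4$ with $\D^3 p_k \equiv 0$, and let $w_k := u_k - p_k$. By \eqref{exist} one has $w_k(x) = \frac{1}{\gamma_6}\int\log(1/|x-y|)V_k e^{6u_k}dy + \tilde c_k$; adding and subtracting the convergent constant $\frac{1}{\gamma_6}\int\log|y|\,V_k e^{6u_k}dy$ (convergence follows from the $e^{-6|x|^4}$ decay of $e^{6u_k}$) brings $w_k$ into the form required by \eqref{nabla}, with kernel $K_k := V_k e^{6p_k}$ satisfying $K_k e^{6w_k} = V_k e^{6u_k}$ and total mass $\alpha_k = \Lambda$. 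A direct computation gives $y\cdot\nabla p_k(y) = 4\mu_k|y|^2 - 4(\mu_k+1)|y|^4$, so \eqref{nabla} unfolds into
\begin{align}\label{plan-poh}
\frac{2\Lambda}{\Lambda_1}(\Lambda - \Lambda_1) = \frac{1}{3}\int_{\R^6} y\cdot\nabla V_k\, e^{6u_k}dy + 8\mu_k\int_{\R^6} V_k|y|^2 e^{6u_k}dy - 8(\mu_k+1)\int_{\R^6}V_k|y|^4 e^{6u_k}dy.
\end{align}
The left-hand side is a strictly positive constant thanks to $\Lambda > \Lambda_1$.

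Next I pass to the limit $k\to\infty$ in \eqref{plan-poh}. Combining Theorem \ref{ARSM} with the quantization of \cite{Mar2}, for radial $u_k$ the concentration set is empty or reduced to $\{0\}$, with concentrated mass a positive integer multiple $m\Lambda_1$ of $\Lambda_1$; away from $0$ the sequence converges in $C^5_{\loc}$ either to a smooth solution $u_\infty$ of $(-\D)^3 u_\infty = V_\infty e^{6u_\infty}$ with $\int V_\infty e^{6u_\infty}dx = \Lambda - m\Lambda_1 \ge 0$, or to $-\infty$ on a polyharmonic set. The crucial point is that the weights $|y|^2$, $|y|^4$ and $y\cdot\nabla V_k(y) = |y|V_k'(|y|)$ all vanish at $y=0$, so any atom at the origin contributes nothing to the three integrals on the right of \eqref{plan-poh}; the decay $e^{6p_k}\sim e^{-6(\mu_k+1)|y|^4}$ at infinity provides uniform tail control, hence those integrals converge to the corresponding ones against $V_\infty e^{6u_\infty}dx$. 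The case $L > 0$ is excluded by a refined bubble analysis: if $\mu_k \to L > 0$ and $\lambda_k \to 0$, then \eqref{Deltauneg0} forces $\D u_k(0)\to-\infty$ and hence concentration at $0$; rescaling by $r_k = 2e^{-u_k(0)}$ and using Theorem \ref{thmclas} one identifies the inner limit with the standard spherical bubble of mass $\Lambda_1$, and a careful bookkeeping of the contribution of the polynomial tail $-L(1-|x|^2)^2$ to the balance \eqref{plan-poh} at the bubble scale forces $L=0$.

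With $L = 0$, the limit of \eqref{plan-poh} reads
\begin{align*}
\frac{2\Lambda}{\Lambda_1}(\Lambda - \Lambda_1) = \frac{1}{3}\int_{\R^6}|y|V_\infty'(|y|)e^{6u_\infty}dy - 8\int_{\R^6}V_\infty|y|^4 e^{6u_\infty}dy,
\end{align*}
and the right-hand side is non-positive: the first term since $V_\infty'\le 0$, and the second since it is minus a non-negative quantity. If the regular part $V_\infty e^{6u_\infty}dx$ carries positive mass (that is $m\Lambda_1 < \Lambda$), the second term is strictly negative; otherwise all of $\Lambda$ has concentrated at the origin, making the right-hand side vanish while the left-hand side remains strictly positive. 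Either way we contradict $\Lambda > \Lambda_1$, which proves the lemma. The main obstacle I expect is the sub-step excluding $L > 0$ under concentration: disentangling a nontrivial polyharmonic polynomial $-L(1-|x|^2)^2$ from the quantized spherical bubble at $0$ requires a delicate interplay of \eqref{exist}, \eqref{Deltauneg0} and the $Q$-curvature quantization theory.
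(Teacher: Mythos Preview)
Your strategy --- apply the Pohozaev identity \eqref{nabla} for each $k$ with $K_k=V_ke^{6p_k}$ and then pass to the limit --- is a reasonable variant of the paper's, but the proposal has two genuine gaps.

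First, a real technical obstruction: \eqref{nabla} is stated under the hypothesis $|x\cdot\nabla K(x)|\le C$, so you need $V_k\in C^1$. The assumptions of Theorem~\ref{thm-1} give only $V_k\in C^0_{rad}$ (it is $V_\infty$ that lies in $C^1$), so your displayed identity is not justified as written --- the term $\int y\cdot\nabla V_k\,e^{6u_k}dy$ is not even defined. The paper avoids this by applying \eqref{nabla} only to a \emph{limit} function, with kernel $V_\infty e^{-6|x|^4}$.

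Second, and more importantly, the limiting picture you invoke is not what Theorem~\ref{ARSM} provides. That result is a strict dichotomy: either $u_k\to u_\infty$ in $C^{5}_{\loc}(\Omega)$ (no concentration at all), or $u_k/\beta_k\to\vp$ and $u_k\to-\infty$ locally uniformly on $\Omega\setminus S$. There is no option ``concentration at $0$ plus a smooth residual $u_\infty$ of mass $\Lambda-m\Lambda_1$''. Once this is set up correctly your argument becomes simpler than you think, and the ``refined bubble analysis'' you flag as the main obstacle is unnecessary. Indeed, under the contradiction hypothesis $\mu_k\le C$, differentiating \eqref{exist} gives $\int_{B_r}|\D u_k|dx\le C(r)$; in alternative~$ii)$ of Theorem~\ref{ARSM} this forces $\D\vp\equiv 0$, so $\vp$ is a negative constant, $S_\vp=\emptyset$, and $u_k\to-\infty$ away from the origin. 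Combined with the tail decay from \eqref{exist}, \emph{all three} integrals on the right of your identity then tend to $0$ --- irrespective of $L$ --- because the weights $|y|^2,|y|^4,|y|V_k'$ vanish at $0$. In alternative~$i)$ one has $\D u_k(0)\to\D u_\infty(0)$ bounded, hence $L=0$ automatically, and your sign argument works. So the case split $L=0$ versus $L>0$ and the bubble bookkeeping you propose are misdirected.

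The paper argues differently and never passes to the limit in a $k$-dependent Pohozaev identity: it first shows $\sup_{B_\delta}u_k\to\infty$ by applying \eqref{nabla} to a hypothetical limit $u_\infty$ (kernel $V_\infty e^{-6|x|^4}$), getting $\Lambda<\Lambda_1$; then, knowing blow-up occurs, under bounded $\mu_k$ it uses the $L^1$ gradient bound and the quantization of \cite{Mar2} to obtain $\int_{B_{R_0}}V_ke^{6u_k}\to\Lambda_1$, while the tail over $B_{R_0}^c$ is smaller than $\tfrac12(\Lambda-\Lambda_1)$ for $R_0$ large --- a direct mass contradiction with $\Lambda>\Lambda_1$.
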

\begin{proof} We proceed by steps. 

\noindent\textbf{Step 1} $u_k(x)\leq 16+u_k(2)-|x|^4$ for $|x|\geq 2$. 

Differentiating under the integral sign and observing that  $\D \log\frac{1}{|\cdot-y|}\leq 0$, from \eqref{exist}, we obtain $$\D \tilde u_k\leq 0,\quad \tilde u_k(x):=u_k(x)-\lambda_k\D u_k(0)(1-|x|^2)^2+|x|^4,$$ and by \eqref{w'} we have that $\tilde u_k$ is monotone decreasing. 
This proves Step 1, thanks to \eqref{Deltauneg}.  

\medskip 
\noindent\textbf{Step 2} For every $\delta>0$ we have $\sup_{B_\delta}u_k\to\infty$. 

Assume by contradiction that  $\sup_{k}\sup_{B_\delta}u_k<\infty$ for some $\delta>0$. Then by \eqref{any-vol}-\eqref{Deltauneg0} one has $|\D u_k(0)|\leq C$, which implies that  $\lambda_k\D u_k(0)\to0$. Then, by Theorem \ref{ARSM}, up to a subsequence either $(u_k)$ is bounded in $C^{5,\sigma}_{\loc}(\R^6)$ for $\sigma\in [0,1)$, or there exists $\beta_k\to\infty$ such that 
{ $$\frac{u_k}{\beta_k}\to\vp:=c_1+c_2r^2+c_3r^4\quad\text{in } C^5_{\loc}(\R^6),\quad  \vp\leq 0, \quad \vp\not\equiv0,$$
for some $c_1,c_2,c_3\in\R$. We claim that the latter case does not occur. Otherwise, differentiating under the integral sign in \eqref{exist}, one gets $\int_{B_r}|\D u_k|dx\leq C(r)$, hence $\D \vp\equiv 0$ in $\R^6$, that is, $\vp\equiv c_1<0$}. Then $u_k\to -\infty$ locally uniformly in $\R^6$, and by Step 1, $\int_{\R^6}V_ke^{6u_k}dx\to0$, a contradiction to \eqref{any-vol}. Thus, up to a subsequence, $u_k\to u_\infty$ in $C^5_{\loc}(\R^6)$.  We claim that  $u_\infty$ satisfies 
$$u_\infty(x)=\frac{1}{\gamma_6}\int_{\R^6}\log\left(\frac{1}{|x-y|}\right)V_\infty(y)e^{6u_\infty(y)}dy-|x|^4+c=:\bar u_\infty(x),$$
with 
$$\alpha:=\int_{\R^6}V_\infty e^{6u_\infty}dx=\Lambda,\quad c:=\lim_{k\to\infty}\tilde c_k.$$
It follows from Step 1 that $u_\infty(x)\leq C-|x|^4$ on $\R^6$. Using this one can show that $u_k\to \bar u_\infty$ in $C^0_{loc}(\R^6)$, and hence $u_\infty=\bar u_\infty$. 

To show that $\alpha=\Lambda$ we use Step 1. Indeed, as  $u_k(2)\leq C$
$$\alpha=\lim_{L\to\infty}\lim_{k\to\infty}\int_{B_L}V_ke^{6u_k}dx=\Lambda-\lim_{L\to\infty}\lim_{k\to\infty}\int_{B_L^c}V_ke^{6u_k}dx=\Lambda.$$
Since $V_\infty'\leq 0$,  applying  \eqref{nabla} with $w=u_\infty+|x|^4$ and $K=V_\infty e^{-|x|^4}$ one gets $\alpha<\Lambda_1$,  a contradiction as   $\alpha>\Lambda_1$. 



\medskip 
\noindent\textbf{Step 3} $\lambda_k\D u_k(0)\to-\infty$

We assume by contradiction that $\lambda_k \D u_k(0)\geq -C$ for some $C>0$. Then, differentiating in \eqref{exist} and using Fubini's theorem, we get for every $R_0>0$
$$\int_{B_{2R_0}}|\nabla u_k(x)|dx\leq C(R_0).$$
Therefore, by \cite[Theorem 1]{Mar2} (see also \cite{Rob2}), as $u_k$ is radially symmetric
$$\int_{B_{R_0}}V_ke^{6u_k}dx\to \Lambda_1.$$
Set now $\ve_0=\frac12(\Lambda-\Lambda_1)$ and fix $R_0=R_0(\ve,\Lambda)$  such that $$\int_{B_{R_0}^c}V_ke^{6u_k}dx\leq C\int_{B_{R_0}^c}e^{-|x|^4}dx<\ve_0.$$ Then we obtain $$\Lambda=\int_{B_{R_0}}V_ke^{6u_k}dx+\int_{B_{R_0}^c}V_ke^{6u_k}dx<\frac{\Lambda+\Lambda_1}{2}+o(1),$$
which is a contradiction.
\end{proof}

\begin{lem}\label{lemetak} Under the same assumptions of Lemma \ref{vkproperty}, we have that $u_k(0)\to\infty$ and \eqref{etaklog}-\eqref{limcurv} hold with $r_k=2e^{-u_k(0)}$ and any $\delta\in (0,1)$.
\end{lem}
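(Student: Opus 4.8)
The goal is to show that under the assumptions of Lemma \ref{vkproperty} the solutions $u_k$ from Theorem \ref{fixed1} fit into case $iv)$ of Theorem \ref{thm-2}, i.e. we must verify $u_k(0)\to\infty$ and then apply the machinery already developed. The first step is to upgrade the information $\lambda_k\Delta u_k(0)\to-\infty$ coming from Lemma \ref{vkproperty} into a statement about blow-up. From \eqref{exist} one differentiates under the integral sign; since $\Delta\log\tfrac{1}{|\cdot-y|}\le 0$ the function $\tilde u_k(x):=u_k(x)-\lambda_k\Delta u_k(0)(1-|x|^2)^2+|x|^4$ has $\Delta\tilde u_k\le 0$ and hence (by \eqref{w'}) is radially decreasing, so $u_k(x)\le u_k(0)+\lambda_k\Delta u_k(0)\big((1-|x|^2)^2-1\big)-|x|^4$; in particular $u_k(x)\le u_k(0)$ for $|x|$ small, giving \eqref{ukmax12}. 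Next, Step 2 of Lemma \ref{vkproperty} gives $\sup_{B_\delta}u_k\to\infty$ for every $\delta$; combined with \eqref{ukmax12} and the decreasing behaviour this forces $u_k(0)\to\infty$. One should also record $\Delta u_k(0)<0$ from \eqref{Deltauneg}.

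The second step is to verify the remaining hypotheses of Lemma \ref{lemetaklog} (with $\alpha_k=-\lambda_k\Delta u_k(0)\to\infty$) and of Lemma \ref{neck1}, and then simply invoke them. For Lemma \ref{lemetaklog} one needs \eqref{Dukdelta}--\eqref{DD2ukdelta}: here the explicit representation \eqref{exist} is the key advantage over the purely blow-up-analytic setting. Differentiating \eqref{exist} twice and using that the logarithmic kernel contributes a bounded amount (the integral $\int V_k e^{6u_k}$ is exactly $\Lambda$, and by Step 1 of Lemma \ref{vkproperty} the mass stays at bounded distance), one finds $\Delta u_k(\tau)=\lambda_k\Delta u_k(0)\cdot\Delta\big[(1-|x|^2)^2\big]_{x=\tau}+O(1)=\alpha_k(C_1+o(1))$ with $C_1=-(-8+8\tau^2\cdot?)\ldots$; concretely $\Delta\big[(1-r^2)^2\big]=-24+32 r^2$ in $\R^6$ wait—one computes $\Delta(1-r^2)^2$ on $\R^6$ and similarly $\Delta^2(1-r^2)^2$, obtaining constants $C_1,C_2$ with $C_3=C_1-\tfrac{C_2\tau^2}{12}=24>0$, exactly as in the proof of \eqref{betakuk(0)} in the previous section. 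Thus Lemma \ref{lemetaklog} yields $u_k(0)\to\infty$ confirmed, $\alpha_k r_k^2=o(1)$, and \eqref{etaklog}. For \eqref{-uk'} in Lemma \ref{neck1} one uses the monotonicity structure: from $u_k(x)\le u_k(0)-\lambda_k\Delta u_k(0)((1-|x|^2)^2-1)-|x|^4$ together with the representation one shows $u_k'\le 0$ near $0$ with $|u_k'|\le C/r$ (using a bound analogous to Lemma \ref{stimeDelta} for $-\Delta u_k\le C/r^2$, which follows by differentiating \eqref{exist} and splitting the domain as in Lemma \ref{derivative-uk}), and $u_k'\ge 0$ on the intermediate range up to some $\theta_k=o(1)$; the fact that $u_k(\delta)\to-\infty$ for $\delta\in(0,1)$ follows from $\lambda_k\Delta u_k(0)\to-\infty$ together with the bound of Step 1, since $(1-\delta^2)^2<1$. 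Then Lemma \ref{neck1} gives \eqref{limcurv}.

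The third step is to identify $\rho=1$, i.e. to pin down that the polyharmonic blow-up set is precisely the unit sphere and that \eqref{ukbetak} holds with $\varphi(r)=-(1-r^2)^2$. This is essentially built into the ansatz: dividing \eqref{exist} by $\beta_k:=-\lambda_k\Delta u_k(0)$, the logarithmic term and the $-|x|^4$ term are $o(\beta_k)$ away from the origin (the first because its $W^{2,p}$-norm on compacts is $O(1)$, the second trivially), so $u_k/\beta_k\to (1-|x|^2)^2\cdot(-1)=-(1-|x|^2)^2$ locally uniformly in $\R^6\setminus\{0\}$, whence $S_\varphi=\{|x|=1\}$ and $S_1=\{0\}$ by Lemma \ref{monotone}; \eqref{betakuk(0)} is the statement $\alpha_k r_k^2=o(1)$ from Lemma \ref{lemetaklog}. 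Finally \eqref{curv1}--\eqref{curv2} of Theorem \ref{thm-1} follow: by \eqref{limcurv} (with $\delta\uparrow 1$) and \eqref{any-vol} a mass $\Lambda-\Lambda_1$ must escape from $B_{1-\ve}$; by Step 1 of Lemma \ref{vkproperty} the mass outside a large ball is negligible, and by the convergence $u_k/\beta_k\to-(1-|x|^2)^2<0$ uniformly on compact subsets of $(B_{1+\ve}\setminus B_{1-\ve})^c\setminus\{0\}$ we get $u_k\to-\infty$ there, so no mass can accumulate except on $\{|x|=1\}$, giving \eqref{curv1} and \eqref{curv2}. The main obstacle is the second step: establishing the precise asymptotics \eqref{Dukdelta}--\eqref{D2ukdelta} and the monotonicity estimate \eqref{-uk'} directly from the integral equation \eqref{exist}, controlling the logarithmic kernel's contribution uniformly — but this parallels the estimates in Lemma \ref{derivative-uk} and Lemma \ref{stimeDelta} closely enough that it should go through.
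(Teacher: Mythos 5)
Your overall strategy is the same as the paper's: verify the hypotheses of Lemma \ref{lemetaklog} with $\alpha_k=-\lambda_k\Delta u_k(0)\to+\infty$ (your constants $C_1=24-32\tau^2$, $C_2=-384$, $C_3=24$ are the ones the paper uses) and then invoke Lemma \ref{neck1}. However, your verification of the two key hypotheses, \eqref{ukmax12} and $u_k(\delta)\to-\infty$, rests on a sign error. The monotonicity of $\tilde u_k(x)=u_k(x)-\lambda_k\Delta u_k(0)(1-|x|^2)^2+|x|^4$ gives
\begin{equation*}
u_k(x)\le u_k(0)+\lambda_k\Delta u_k(0)\bigl[(1-|x|^2)^2-1\bigr]-|x|^4,
\end{equation*}
and since $\lambda_k\Delta u_k(0)<0$ while $(1-|x|^2)^2-1\le 0$ for $|x|\le\sqrt2$, the middle term equals $-\lambda_k\Delta u_k(0)\,(2|x|^2-|x|^4)\ge 0$ and tends to $+\infty$ for every fixed $x\ne0$. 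So this inequality does \emph{not} yield $u_k(x)\le u_k(0)$ near the origin; it points the wrong way. The correction $\lambda_k\Delta u_k(0)(1-|x|^2)^2$ is \emph{larger} at $x\ne 0$ than at $x=0$ (for $|x|<\sqrt 2$), so \eqref{ukmax12} cannot come from it: it has to come from the concentration of the logarithmic kernel at the origin. The same confusion appears when you deduce $u_k(\delta)\to-\infty$ ``since $(1-\delta^2)^2<1$'': the term $\lambda_k\Delta u_k(0)(1-\delta^2)^2$ indeed tends to $-\infty$, but $\tilde c_k\to+\infty$ (as the paper proves later), so the pointwise value of $u_k(\delta)$ cannot be read off from the polynomial part alone. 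Since \eqref{ukmax12} is exactly what produces the upper bound \eqref{boundetak} needed for the compactness in Lemma \ref{lemetaklog}, and since in your scheme $u_k(0)\to\infty$ is deduced from \eqref{ukmax12}, this gap is not cosmetic.

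The correct route (the paper's) goes through the derivative: differentiating \eqref{exist} gives $u_k'(r)=\frac{O(1)}{r}-4\lambda_k\Delta u_k(0)\,r(1-r^2)-4r^3$, where the $O(1)/r$ bound on the kernel contribution uses only $\int_{\R^6}V_ke^{6u_k}dx=\Lambda$. Hence $u_k'\to+\infty$ locally uniformly on $(0,1)$ and $u_k'\to-\infty$ locally uniformly on $(1,\infty)$, so $u_k$ has a local maximum at some $\xi_k\to1$. On the other hand, $\Delta^3 u_k<0$ and \eqref{w'} show (as in Lemma \ref{monotone}) that $u_k$ has at most two local maxima, and \eqref{Deltauneg} forces one of them to sit at the origin. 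Combined with $\sup_{B_\delta}u_k\to\infty$ (Step 2 of Lemma \ref{vkproperty}) and the volume bound \eqref{eq-2}, this yields $u_k(0)\to\infty$, \eqref{ukmax12}, the monotonicity \eqref{-uk'} with $\theta_k$ the unique interior local minimum, and $u_k(\delta)\to-\infty$ for every $\delta\in(0,1)$ (if $u_k(\delta)$ stayed bounded below, the divergence of $\int_\delta^{\delta'}u_k'\,dr$ would contradict \eqref{eq-2}). With these in hand, your second step goes through. Note also that your third step (identifying $\rho=1$ and proving \eqref{curv1}--\eqref{curv2}) is not part of this lemma: in the paper it belongs to the completion of the proof of Theorem \ref{thm-1} and requires the separate argument that $u_k(1)\to\infty$, i.e.\ that $\tilde c_k\to\infty$.
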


\begin{proof} We proceed by steps.

\noindent \textbf{Step 1} There exists a radius $\xi_k\to 1$ such that $\xi_k$ is a local maxima of $u_k$. Indeed differentiating \eqref{exist} under the integral sign we obtain 
 \begin{align}
  u_k'(r)&=\frac{1}{\omega_5r^5}\int_{B_r}\D u_k(x)dx\notag\\
  &=C\frac{1}{r^5}\int_{\R^6}V_k(y)e^{6u_k(y)}\int_{B_r}\frac{dx}{|x-y|^2}dy-\frac{1}{\omega_5r^5}\int_{B_r}\D \left(-\lambda_k\D u_k(0)(1-|x|^2)^2+|x|^4\right)dx\notag\\
  &=\frac{O(1)}{r}-4\lambda_k \D u_k(0)r(1-r^2)-4r^3. \label{O(1)r}
 \end{align}
Then, using Lemma \ref{vkproperty} we infer
$$u_k'\xrightarrow{k\to\infty} +\infty\quad \text{uniformly locally in } (0,1)$$
and
$$u_k'\xrightarrow{k\to\infty} -\infty\quad \text{uniformly locally in } (1,\infty).$$
This proves the claim. 

\medskip

\noindent \textbf{Step 2} We claim that $u_k(0)\to\infty$. Indeed  a simple application of \eqref{w'}, together with $\Delta^3 u_k<0$ implies that $u_k$ can have at most two local maxima (compare to the proof of Lemma \ref{monotone}). From \eqref{Deltauneg} and the previous step we infer that $0$ and $\xi_k$ are these local maxima. Since $\xi_k\to 1$, the claim now follows at once from Step 2 of Lemma \ref{vkproperty}. 
 
\medskip


\noindent \textbf{Step 3} We claim that $u_k$ satisfies \eqref{etaklog}. Indeed, as $u_k(0)\to\infty$, $u_k(0)$ is the global maximum of $u_k$ on $[0,\frac12]$ and $u_k\to -\infty$ locally uniformly in $(0,\frac12]$. Then we can apply Lemma \ref{lemetaklog} with $\alpha_k=-\lambda_k \Delta u_k(0)\to +\infty$ (by Lemma \ref{vkproperty}), $C_1=24-32\tau^2$, $C_2=-384$ and $C_3=24$ for some $\tau\in (0,\tfrac12]$ to obtain that \eqref{etaklog}.

\medskip

\noindent \textbf{Step 4} \eqref{limcurv} hold for every $\delta\in (0,1)$.

Let us verify that the assumptions of Lemma \ref{neck1} are satisfied for any fixed $\delta\in (0,1)$.
From Step 1 and Step 2 we can find $\theta_k\in (0,1)$ such that \eqref{-uk'} holds, while the assumptions of Lemma \ref{lemetaklog} have already been verified. Moreover $u_k(\theta_k)\le u_k(\delta)\to-\infty$. Then \eqref{limcurv} follows from Lemma \ref{neck1}.
\end{proof}

\medskip\noindent\textbf{Proof of Theorem \ref{thm-1} (completed).}
Taking into account Lemma \ref{lemetak},   if we show that $u_k(1)\to\infty$, then we are in case $iv)$ of Theorem \ref{thm-2} with $\rho=1$.

From \eqref{exist} we bound 
\begin{align*}
 u_k(1)&\geq\frac{1}{\gamma_6}\int_{|e_1-y|>1}\log\left(\frac{1}{|e_1-y|}\right)V_k(y)e^{6u_k(y)}dy -1+\tilde c_k,
\end{align*}
where $e_1=(1,0,\dots,0)$ is a unit vector. By Step 1 of Lemma \ref{vkproperty}
$$\int_{|e_1-y|>1}\log\left(\frac{1}{|e_1-y|}\right)V_k(y)e^{6u_k(y)}dy=O(1).$$
We claim that $\tilde c_k\to\infty$. In order to prove the claim we set $$\mathcal{C}_r:=\{x=(x_1,\bar x)\in\R^6:|\bar x|\leq rx_1\}.$$
Since $u_k$ is radial and satisfies \eqref{any-vol}, we can choose $r_0>0$ small such that
$$\int_{\C_{2r_0}}V_ke^{6u_k}dx\leq\frac{\Lambda_1}{4}.$$
Hence, for $x\in\C_{r_0}\setminus B_\frac12$
$$u_k(x)\leq C+\frac{1}{\gamma_6}\int_{\C_{2r_0}}\log\left(\frac{1}{|x-y|}\right)V_k(y)e^{6u_k(y)}dy+\lambda_k\D u_k(0)(1-|x|^2)^2+\tilde c_k.$$
Then by Jensen's inequality and Fubini's theorem
\begin{align*}
\int_{\C_{r_0}\setminus B_\frac12}V_k(x)e^{6u_k(x)}dx&\leq Ce^{6\tilde c_k}\int_{\C_{2r_0}}\frac{f_k(y)}{\|f_k\|}\int_{C_{r_0}}V_k(x) \frac{e^{6\lambda_k\D u_k(0)(1-|x|^2)^2}}{|x-y|^p}dxdy\\
&=e^{6 \tilde c_k}o(1),
\end{align*}
thanks to Lemma \ref{vkproperty}, where $f_k:=V_ke^{6u_k}$, $\|f_k\|:=\|f_k\|_{L^1(\C_{2r_0})}$ and $p:=\frac{6}{\gamma_6}\|f_k\|<6$. Observing that by Lemma \ref{lemetak}
\begin{align*}
0&<\Lambda-\Lambda_1
=\int_{\R^6\setminus B_{\frac12}} V_ke^{6u_k}+o(1)\leq C(r_0)\int_{C_{r_0}\setminus B_\frac12}V_ke^{6u_k}dx+o(1),
\end{align*}
we conclude our claim. 

This proves that $u_k(1)\to \infty$, hence we are in case $iv)$ of Theorem \ref{thm-2}. 
Then from \eqref{ukbetak} and Step 1 of Lemma \ref{vkproperty} it follows 
$$\int_{B_{1+\ve}\setminus B_{1-\ve}}V_ke^{6u_k}dx=\Lambda-\Lambda_1+o(1)$$
for every $\ve\in (0,1)$.
\hfill $\square$



\section{{The case of dimension $2n\ge 6$}}\label{dim2n}

Similar to Theorem \ref{thm-1} one can prove the following.

\begin{thm}\label{thm-1bis} Let $(V_k)\subset C^0_{rad}(\R^{2n}) $ be positive functions with $V_k\to V_\infty$  uniformly, where $V_\infty\in C^1(\R^{2n})$, $V_\infty(0)=(2n-1)!$ and for some $a,b\ge 0$
 \begin{align}\label{condVinfty}
 \frac{d}{dr}\bra{\frac{V_\infty (r)}{e^{ar^2+br^4}}}\leq 0, \quad r\in [0,\infty).
 \end{align}
Then  
for every $\Lambda \ge\Lambda_1$ there exists a sequence $(u_k)\subset C^{2n}_{rad}(\R^{2n})$ of solutions to \eqref{eq-1}-\eqref{eq-2} with $\Omega=\R^{2n}$ such that $u_k(0)\to\infty$ and $u_k(1)\to \infty$ as $k\to\infty$, and for every $\ve\in (0,1)$
\begin{align}
&\lim_{k\to\infty}\int_{B_{1+\ve}\setminus B_{1-\ve}}V_ke^{2nu_k}dx=\Lambda-\Lambda_1,\label{curv1bis}\\
&\lim_{k\to\infty}\int_{\R^{2n}\setminus (B_\ve\cup (B_{1+\ve}\setminus B_{1-\ve})}V_ke^{2nu_k}dx=0.\label{curv2bis}
\end{align}
\end{thm}

\begin{proof}
Again using the existence result of \cite{H-volume}, for $n\geq 3$, $\lambda_k\in (0,\frac{1}{8n})$ and $\Lambda>\Lambda_1=(2n-1)!|S^{2n}|$ we find a solution to 
\begin{equation}\label{exist-2m}
u_k(x)=\frac{1}{\gamma_{2n}}\int_{\R^{2m}}\log\left(\frac{1}{|x-y|}\right)V_k(y)e^{2mu_k(y)}dy+\lambda_k\D u_k(0)(1-|x|^2)^2-|x|^4+\tilde c_{k},
\end{equation} such that $\D u_k(0)<0$ and $$\int_{\R^{2n}}V_ke^{2nu_k}dx=\Lambda.$$
Differentiating under the integral sign, and using that 
$$\D^3 \log\frac{1}{|x|}=-\frac{8(m-2)[(m-5)^2-1]}{|x|^6}\quad \text{in }\R^m,$$ we see that
$$\D^3 u_k<0\quad\text{in }\R^{2n}.$$

{We now send $\lambda_k\to 0^+$} and want to show that $u_k(0)\to\infty$ and $u_k(1)\to\infty$. This can be done in the following steps.

\medskip

\noindent \textbf{Step 1} $\lambda_k\D u_k(0)\to-\infty$. In particular, $\sup_{B_\delta(0)} u_k\to\infty$ for every $\delta>0$. This can be proven with the same argument of Lemma \ref{vkproperty}. 

\medskip

\noindent \textbf{Step 2} There exists $\beta_k\to\infty$ such that $$\frac{u_k}{\beta_k}\to\vp\quad\text{in }C^{2n-1}_{loc}({\R^{2n}}\setminus S),\quad \vp=-c_1(1-|x|^2)^2+c_2,$$ for some $c_1\geq 0,\, c_2\leq 0$, where $S=\{0\}\cup S_\vp$. 

From \eqref{exist-2m} we see that $\vp$ should be of the form $\vp=-c_1(1-|x|^2)^2+c_2$ for some $c_1,c_2\in\R$.  Since $\vp\leq 0$ on $\R^{2n}$, we get $c_1\geq 0$ and $c_2\leq 0$.

\medskip

\noindent \textbf{Step 3} (Monotonicity) $u_k$ has two local maximum points, namely $0$ and a point  $\xi_k\to1$.  Indeed, since  $\D u_k(0)<0$, $0$ is a local maxima and the existence of $\xi_k$ follows as in Step 1 of Lemma \ref{lemetak}.  
To show that $u_k$ can not have another point of local maxima we need to use that $\D^3 u_k<0$ in {$\R^{2n}$}. First we show that  the same conclusion of Lemma 2.1 holds. 
We can repeat the same proof simply replacing \eqref{vpdelta} by $$u_k(1\pm\delta)\to-\infty,\quad u_k'(1-\delta)\to\infty,\quad u_k'(1+\delta)\to-\infty,$$ which follows from  \eqref{exist-2m}.

\medskip

\noindent \textbf{Step 4} $u_k(0)\to\infty$ and $u_k(0)=\sup_{B_\frac12}u_k$. This follows trivially from the above steps. 

\medskip

\noindent \textbf{Step 5} Blow-up at the origin is spherical.  This can be proven as in Lemma \ref{lemetaklog}.  

\medskip

\noindent \textbf{Step 6} There is concentration at the origin. This can be done as in subsection 2.3. 

\medskip

\noindent \textbf{Step 7} $u_k(1)\to\infty$. It suffices to show that the constant (appearing in \eqref{exist-2m}) $\tilde c_k\to\infty$. The proof is exactly as the case of dimension $6$. 
\end{proof}

\section{Proof of Theorem \ref{thm-exp}}

\subsection{Proof of \eqref{ukglobal} and \eqref{betakuk}}

We will now establish some relations among $\theta_{i,k}$, $\beta_k$ and $u_k(0)$ that will lead to the proofs of \eqref{ukglobal} and \eqref{betakuk}. We start with a preliminary lemma.
 
 \begin{lem}\label{2.6}
 For every $0<\xi_k<\tilde\xi_k$ with $r_k=o(\xi_k)$ and $\tilde\xi_k=o(\theta_{4,k})$ we have  $$\D^2u_k(x)=\frac{32+o(1)}{|x|^4}\quad \text{on }  B_{\tilde\xi_k}\setminus B_{\xi_k}.$$ 
In particular, $$\int_{B_t}\D^2u_k(x)dx=16\omega_5(1+o(1))t^2\quad\text{for }\xi_k\leq t\leq \tilde\xi_k.$$
  \end{lem}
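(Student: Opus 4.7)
The plan is to integrate the radial ODE satisfied by $\Delta^2 u_k$ twice, using the quantization result \eqref{limcurv} as the main input.

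First, I would apply the radial identity \eqref{w'} to $w = \Delta^2 u_k$. Since $\Delta(\Delta^2 u_k) = \Delta^3 u_k = -V_k e^{6u_k}$, this gives
\[
(\Delta^2 u_k)'(r) = -\frac{M_k(r)}{\omega_5 r^5}, \qquad M_k(r) := \int_{B_r} V_k e^{6u_k}\, dx.
\]
Integrating from $r$ to $\theta_{4,k}$ and using $\Delta^2 u_k(\theta_{4,k}) = 0$ from Lemma \ref{monotone}, I obtain the integral representation
\[
\Delta^2 u_k(r) = \int_r^{\theta_{4,k}} \frac{M_k(s)}{\omega_5 s^5}\, ds.
\]

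The key estimate is then $M_k(s) = \Lambda_1 + o(1)$ uniformly for $s$ ranging in a set covering $[\xi_k, \theta_{4,k}]$. The lower bound comes from rescaling \eqref{etaklog}: since $\int_{\R^6} 120 e^{6\eta} dx = \Lambda_1$, a change of variables gives $M_k(L r_k) \to 120 \int_{B_L} e^{6\eta} dx$ for each fixed $L$, which exceeds $\Lambda_1 - \varepsilon$ for $L$ large, and a diagonal argument produces $L_k \to \infty$ with $L_k r_k = o(\xi_k)$. The upper bound follows from \eqref{limcurv} and the monotonicity of $M_k$, since $M_k(s) \le M_k(\delta)\to\Lambda_1$ for any fixed $\delta \in (0,1)$ and eventually $s \le \delta$ (recall $\theta_{4,k} = o(1)$ by Corollary \ref{corthetaik}). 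Plugging this uniform estimate into the formula with $r \in [\xi_k, \tilde\xi_k]$ yields
\[
\Delta^2 u_k(r) = \frac{\Lambda_1 + o(1)}{4\omega_5}\left(\frac{1}{r^4} - \frac{1}{\theta_{4,k}^4}\right),
\]
and the hypothesis $\tilde\xi_k = o(\theta_{4,k})$ makes $(r/\theta_{4,k})^4 = o(1)$; since $\Lambda_1/(4\omega_5) = 32$, the first claim follows.

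For the second statement, I would write $\int_{B_t} \Delta^2 u_k\, dx = \omega_5 \int_0^t r^5 \Delta^2 u_k(r)\, dr$, substitute the integral representation above, and apply Fubini to switch the order of integration:
\[
\int_{B_t} \Delta^2 u_k\, dx = \frac{1}{6}\int_0^t s M_k(s)\, ds + \frac{t^6}{6}\int_t^{\theta_{4,k}} \frac{M_k(s)}{s^5}\, ds.
\]
The main delicate point is the contribution to the first term from $s \in [0, L_k r_k]$, where my only bound is the trivial $M_k(s) \le \Lambda_1 + o(1)$; this contributes $O((L_k r_k)^2)$, but since $L_k r_k = o(\xi_k) \le o(t)$, the error is $o(t^2)$, which is absorbable. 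On $[L_k r_k, t]$ the estimate $M_k(s) = \Lambda_1(1+o(1))$ yields $\Lambda_1 t^2/12 \cdot (1+o(1))$ for the first term, and a direct computation together with $(t/\theta_{4,k})^4 = o(1)$ gives $\Lambda_1 t^2/24\cdot(1+o(1))$ for the second. Adding these and using $\Lambda_1/8 = 16\omega_5$ yields $16\omega_5 (1+o(1)) t^2$, completing the proof.
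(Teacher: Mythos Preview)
Your argument is correct and, for the first assertion, essentially identical to the paper's: both write
\[
\Delta^2 u_k(r)=\int_r^{\theta_{4,k}}\frac{1}{\omega_5 s^5}\int_{B_s}V_k e^{6u_k}\,dx\,ds
\]
and replace the inner integral by $\Lambda_1+o(1)$ via \eqref{etaklog} and \eqref{limcurv}; you merely spell out the diagonal argument that the paper leaves implicit.

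For the second assertion the routes differ. The paper simply invokes the first part together with the pointwise bound $\Delta^2 u_k(r)\le C r^{-4}$ from Lemma~\ref{stimeDelta}: choosing an auxiliary scale $\xi_k'$ with $r_k=o(\xi_k')$ and $\xi_k'=o(\xi_k)$, the inner ball contributes $\int_{B_{\xi_k'}}\Delta^2 u_k\le C(\xi_k')^2=o(t^2)$, while on $B_t\setminus B_{\xi_k'}$ the first part gives $(16+o(1))\omega_5 t^2$ directly. Your approach instead inserts the integral representation of $\Delta^2 u_k$ and applies Fubini, reducing everything to integrals of $M_k$. This avoids any appeal to Lemma~\ref{stimeDelta} and makes the contribution of the small-scale region $[0,L_k r_k]$ explicit; the price is a slightly longer computation, but the two arguments are of comparable difficulty and yield the same conclusion.
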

 \begin{proof}
 Using \eqref{etaklog}, \eqref{limcurv} and recalling that $\frac{\Lambda_1}{\omega_5}=128$ we obtain for  $\xi_k\leq |x|\leq\tilde\xi_k$
 \begin{align*}
 \D^2u_k(x)&=\int_{|x|}^{\theta_{4,k}}\frac{1}{\omega_5t^5}\int_{B_t}V_k(y)e^{6u_k(y)}dydt\\
 &=(\Lambda_1+o(1))\int_{|x|}^{\theta_{4,k}}\frac{1}{\omega_5t^5}dt\\
 &=(32+o(1))\frac{1}{|x|^4}.
 \end{align*}
 The second part  follows from the first part and Lemma \ref{stimeDelta}.
 \end{proof}

From the definition of $\theta_{i,k}$ one has $\D^\frac i2u_k(\theta_{i,k})=0$ for $i=2,4$ and $(\D^{\frac{i-1}{2}}u_k)'(\theta_{i,k})=0$ for $i=1,3$. 

  \begin{lem} \label{akbk}
 We have  
  \begin{itemize}
  \item [i)] $\theta_{i,k}e^{u_k(0)}\to\infty$ for  $i=0,1,2,3,4$.
   \item [ii)] $\beta_k\theta_{4,k}^4\to\frac{1}{12}$. 
   \item [iii)] $\beta_{k}\theta_{2,k}^2\to\frac13.  $  
   
   \item [iv)]   $\theta_{3,k}\approx \theta_{4,k}$
   
   \item [v)]   $\theta_{1,k}\approx \theta_{2,k}$ 
        \end{itemize}
  \end{lem}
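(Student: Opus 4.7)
My approach combines three complementary scales of information: (a) the local profile $\eta_k\to\eta$ near the origin from \eqref{etaklog}, where $\eta(x)=\log(2/(1+|x|^2))$; (b) the intermediate-scale formula $\Delta^2 u_k(x)\approx 32/|x|^4$ of Lemma \ref{2.6} together with the curvature quantization \eqref{limcurv}; and (c) the global convergence $u_k/\beta_k\to\varphi(r)=-(1-r^2)^2$ at fixed positive radii from \eqref{ukbetak} (which yields $\Delta\varphi=24-32r^2$ and $\Delta^2\varphi\equiv -384$). Throughout I will use the identity $w'(r)=\frac{1}{\omega_5 r^5}\int_{B_r}\Delta w\,dy$ applied to $w=u_k,\Delta u_k,\Delta^2 u_k$, together with the equation $\Delta^3 u_k=-V_k e^{6u_k}$.

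For \emph{(i)}, an explicit computation gives $\eta'(r)=-\frac{2r}{1+r^2}$, $\Delta\eta(r)=-\frac{4(2r^2+3)}{(1+r^2)^2}$, $(\Delta\eta)'(r)=\frac{16r(r^2+2)}{(1+r^2)^3}$, $\Delta^2\eta(r)=\frac{32(r^4+4r^2+6)}{(1+r^2)^4}$, each of fixed nonzero sign for $r>0$. Since $\eta_k\to\eta$ in $C^5_{\loc}(\R^6)$, none of $u_k'$, $\Delta u_k$, $(\Delta u_k)'$, $\Delta^2 u_k$ can vanish on an annulus $\{r_k\le r\le Lr_k\}$ for any fixed $L$ and $k$ large, forcing $\theta_{i,k}/r_k\to\infty$.

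For \emph{(ii)} I integrate $(\Delta^2 u_k)'(t)=-m_k(t)/(\omega_5 t^5)$, with $m_k(t):=\int_{B_t}V_k e^{6u_k}\,dy$, from $\theta_{4,k}$ to a fixed small $\bar r\in(0,1)$. Since $\theta_{4,k}/r_k\to\infty$ by part (i), concentration of $e^{6\eta}$ together with monotonicity of $m_k$ gives $m_k(t)\to\Lambda_1$ uniformly for $t\ge\theta_{4,k}$, so
\[
\Delta^2 u_k(\bar r)=-\frac{\Lambda_1}{4\omega_5}\bigl(\theta_{4,k}^{-4}-\bar r^{-4}\bigr)(1+o(1))=-32\,\theta_{4,k}^{-4}(1+o(1)).
\]
Matching with $\Delta^2 u_k(\bar r)=\beta_k\Delta^2\varphi(\bar r)(1+o(1))=-384\beta_k(1+o(1))$ from \eqref{ukbetak} gives $\beta_k\theta_{4,k}^4\to 1/12$. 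Part \emph{(iii)} proceeds one level down: (ii) combined with the anticipated $\theta_{2,k}^2\sim 1/(3\beta_k)$ yields $r_k=o(\theta_{2,k})=o(\theta_{4,k})$, so Lemma \ref{2.6} gives $(\Delta u_k)'(t)=16/t^3(1+o(1))$ on any intermediate window $(\xi_k,\tilde\xi_k)$ with $r_k=o(\xi_k)$ and $\tilde\xi_k=o(\theta_{4,k})$. Integrating from $\theta_{2,k}$ to a small fixed $\bar r$,
\[
\Delta u_k(\bar r)=8\,\theta_{2,k}^{-2}(1+o(1))+O(\beta_k\bar r^2),
\]
where the $O(\beta_k\bar r^2)$ absorbs the contribution of $t\in(\tilde\xi_k,\bar r)$ via $(\Delta u_k)'(t)/\beta_k\to(\Delta\varphi)'(t)=-64t$. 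Comparing with $\Delta u_k(\bar r)=(24-32\bar r^2)\beta_k(1+o(1))$ and sending $\bar r$ small gives $\beta_k\theta_{2,k}^2\to 1/3$.

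For \emph{(iv), (v)} I exploit the integral characterizations $\int_{B_{\theta_{3,k}}}\Delta^2 u_k\,dy=0$ (from $(\Delta u_k)'(\theta_{3,k})=0$) and $\int_{B_{\theta_{1,k}}}\Delta u_k\,dy=0$. Integrating $(\Delta^2 u_k)'$ on both sides of $\theta_{4,k}$ yields $\Delta^2 u_k(s)\approx 32[s^{-4}-\theta_{4,k}^{-4}]$ for $r_k\ll s<\theta_{4,k}$ and $\Delta^2 u_k(s)\approx -32[\theta_{4,k}^{-4}-s^{-4}]$ for $s>\theta_{4,k}$. Substituting these into $\int_0^{\theta_{3,k}}s^5\Delta^2 u_k(s)\,ds=0$ and setting $\sigma=\theta_{3,k}/\theta_{4,k}$ reduces the balance to $\sigma^4=3$, so $\theta_{3,k}/\theta_{4,k}\to 3^{1/4}$, giving (iv). An entirely analogous argument, using $\Delta u_k(s)\approx 8[\theta_{2,k}^{-2}-s^{-2}]$ in both directions from $\theta_{2,k}$ (obtained by integrating $(\Delta u_k)'\approx 16/t^3$), converts $\int_0^{\theta_{1,k}}s^5\Delta u_k(s)\,ds=0$ into $\theta_{1,k}^2/\theta_{2,k}^2\to 3/2$, giving (v). The principal technical hurdle is controlling the endpoint contributions $s\lesssim r_k$, where the asymptotic formulas break down: they must be bounded by $O(r_k^2)$ or $O(r_k^4)$ using Lemma \ref{stimeDelta} together with the uniform bounds on $\Delta^j\eta_k$ on bounded sets, and shown to be negligible compared with the intermediate-scale leading terms of order $\theta_{4,k}^2$ and $\theta_{2,k}^4$ respectively.
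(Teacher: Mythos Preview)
Your arguments for (i) and (ii) are correct and coincide with the paper's. For (iv) your integral-balance method is more elaborate than the paper's and even yields the sharper limit $\theta_{3,k}/\theta_{4,k}\to 3^{1/4}$; the paper instead obtains the cruder $\theta_{3,k}^4\beta_k\le C$ in one line by plugging $r=\theta_{3,k}$ into the Green-function estimate of Lemma~\ref{derivative-uk}(ii), and similarly for (v) via Lemma~\ref{derivative-uk}(iii). So where both arguments are valid, yours gives more information at the cost of more work.

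There is, however, a genuine circularity in your proof of (iii). To integrate $(\Delta u_k)'(t)\approx 16/t^3$ starting from $t=\theta_{2,k}$ you need $\theta_{2,k}$ to lie in an intermediate window $(\xi_k,\tilde\xi_k)$ with $\tilde\xi_k=o(\theta_{4,k})$, hence $\theta_{2,k}=o(\theta_{4,k})$; but Lemma~\ref{monotone} does not give any a priori ordering between $\theta_{2,k}$ and $\theta_{4,k}$, and your phrase ``the anticipated $\theta_{2,k}^2\sim 1/(3\beta_k)$'' only flags the gap without closing it. The paper closes it by a preliminary crude step: using merely $\Delta^2 u_k(x)\le C/|x|^4$ from Lemma~\ref{stimeDelta} to bound $\Delta u_k(\varepsilon)=\Delta u_k(\varepsilon)-\Delta u_k(\theta_{2,k})\le C/\theta_{2,k}^2$, which together with $\Delta u_k(\varepsilon)\sim 24\beta_k$ gives $\beta_k\theta_{2,k}^2\le C$ and hence $\theta_{2,k}=o(\theta_{4,k})$ by (ii); only \emph{then} is Lemma~\ref{2.6} invoked for the exact constant. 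The same defect recurs in your proof of (v): the approximation $\Delta u_k(s)\approx 8[\theta_{2,k}^{-2}-s^{-2}]$ comes from Lemma~\ref{2.6} and is valid only for $s=o(\theta_{4,k})$, so using it up to $s=\theta_{1,k}$ presupposes $\theta_{1,k}=o(\theta_{4,k})$, which is precisely what you want to conclude. (A secondary looseness in (iii): the bound $O(\beta_k\bar r^2)$ on $(\tilde\xi_k,\bar r)$ cannot be justified by $(\Delta u_k)'/\beta_k\to(\Delta\varphi)'$ alone, since that convergence is only locally uniform away from $0$; one needs the two-sided bound $-C\beta_k\le\Delta^2 u_k\le C/|x|^4$, as the paper does, which in addition produces an $o(\theta_{2,k}^{-2})$ error that is harmlessly absorbed.)
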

\begin{proof}  $i)$ follows from the definition of $\theta_{i,k}$ and \eqref{etaklog}.

\medskip
Since $\Delta^2 (1-r^2)^2= 384$, by \eqref{ukbetak} and \eqref{limcurv} we have
 \begin{align}
 0&=\Delta^2u_k(\theta_{4,k})=\Delta^2u_k\left(\tfrac12\right)+\int_{\theta_{4,k}}^\frac12\frac{1}{\omega_5r^5}\int_{B_r}V_ke^{6u_k}dxdr\notag\\
 &= (-384+o(1))\beta_k+(\Lambda_1+o(1))\int_{\theta_{4,k}}^\frac12\frac{1}{\omega_5r^5}dr\notag\\
 &=(-384+o(1))\beta_k+(\Lambda_1+o(1))\frac{1}{4\omega_5\theta_{4,k}^4}+C,  \notag
\end{align}
hence
$$\lim_{k\to\infty}\beta_k\theta_{4,k}^4=\frac{\Lambda_1}{1536\omega_5}=\frac{1}{12},$$
and this proves $ii)$. 

\medskip 

To prove  $iii)$ fix $\ve>0$ arbitrarily small. Then by Lemma \ref{stimeDelta} and \eqref{ukbetak} we get 
\begin{align}
(24-32\ve^2+o(1))\beta_k&=\D u_k(\ve)-\D u_k(\theta_{2,k})\notag\\
&=\int_{\theta_{2,k}}^\ve\frac{1}{\omega_5t^5}\int_{B_t}\D^2u_k(x)dxdt \label{17} \\ 
&\leq \frac{C}{\theta_{2,k}^2}.\notag
\end{align}
This shows that $\beta_k\theta_{2,k}^2\leq C$, and in particular  $\theta_{2,k}=o(\theta_{4,k})$, thanks to $ii)$. Therefore, we can choose $\xi_k\in (\theta_{2,k},\theta_{4,k})$ such that $\theta_{2,k}=o(\xi_k)$ and $\xi_k=o(\theta_{4,k})$. 

We write the integral in \eqref{17} as $I_1+I_2$ where  $$I_1:=\int_{\theta_{2,k}}^{\xi_k}\frac{1}{\omega_5t^5}\int_{B_t}\D^2u_k(x)dxdt ,\quad I_2:=\int_{\xi_k}^\ve\frac{1}{\omega_5t^5}\int_{B_t}\D^2u_k(x)dxdt. $$
Using Lemma \ref{2.6} we compute
\begin{align*}
I_1=16(1+o(1))\int_{\theta_{2,k}}^{\xi_k}\frac{1}{t^3}dt =\frac{8+o(1)}{\theta_{2,k}^2}.
\end{align*}
Using that $-C\beta_k\leq \D^2u_k(x)\leq \frac{C}{|x|^4}$ we bound $$|I_2|\leq C\ve^2\beta_k+\frac{C}{\xi_k^2}=C\ve^2\beta_k+\frac{o(1)}{\theta_{2,k}^2}.$$
Now $iii)$ follows  from \eqref{17} as $\ve>0$ is arbitrary and $\theta_{2,k}^2\beta_k\le C$. 

\medskip

We now prove $iv)$. From Lemma \ref{monotone} we have $\theta_{4,k}<\theta_{3,k}$. Taking $r=\theta_{3,k}$ in $ii)$ of Lemma \ref{derivative-uk} we have $\theta_{3,k}^4\beta_k\leq C$, and hence $\theta_{3,k}=O(\theta_{4,k})$, thanks to $ii)$. 

\medskip
Finally, we prove $v)$. From Lemma \ref{monotone} we have $\theta_{2,k}<\theta_{1,k}$. Taking $r=\theta_{1,k}$ in $iii)$ of Lemma \ref{derivative-uk} we have $\theta_{1,k}^2\beta_k\leq C$, and hence $\theta_{1,k}=O(\theta_{2,k})$, thanks to $iii)$. 
\end{proof}


 \begin{lem}\label{max-laplacian}
 We have
$$\lim_{k\to\infty}\frac{\D u_k(\theta_{3,k})}{\beta_k}=\D\vp(0)=24.$$
 \end{lem}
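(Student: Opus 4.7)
The plan is to use a Fubini-type identity, exploiting the defining property $(\Delta u_k)'(\theta_{3,k})=0$, together with a sharp asymptotic for $\Delta^2 u_k$ on the intermediate scale $(\theta_{4,k},\tau]$. Starting from the radial formula $(\Delta u_k)'(r)=\frac{1}{\omega_5 r^5}\int_{B_r}\Delta^2 u_k$ and integrating between $\theta_{3,k}$ and some fixed $\tau\in(0,1)$, swapping the order of integration and using the crucial identity $\int_{B_{\theta_{3,k}}}\Delta^2 u_k\,dy=\omega_5\theta_{3,k}^5(\Delta u_k)'(\theta_{3,k})=0$ produces
\begin{equation*}
\Delta u_k(\tau)-\Delta u_k(\theta_{3,k})=\frac{1}{4}\int_{\theta_{3,k}}^\tau \Delta^2 u_k(r)\left(r-\frac{r^5}{\tau^4}\right)dr.
\end{equation*}
Since \eqref{ukbetak} yields $\Delta u_k(\tau)/\beta_k\to 24-32\tau^2$, the task reduces to showing that the right-hand side equals $-32\beta_k\tau^2+o(\beta_k)$.

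To estimate the integral, I would derive a uniform asymptotic for $\Delta^2 u_k$ on $(\theta_{4,k},\tau]$. Since $\Delta^2 u_k(\theta_{4,k})=0$, integrating $(\Delta^2 u_k)'(s)=-\frac{1}{\omega_5 s^5}\int_{B_s}V_k e^{6u_k}dy$ from $\theta_{4,k}$ to $r$ gives
\begin{equation*}
-\Delta^2 u_k(r)=\frac{1}{\omega_5}\int_{\theta_{4,k}}^r\frac{1}{s^5}\left(\int_{B_s}V_k e^{6u_k}\,dy\right)ds.
\end{equation*}
The inner integral equals $\Lambda_1(1+o(1))$ uniformly in $s\in(\theta_{4,k},\tau)$: this follows by combining \eqref{etaklog} (which gives $\int_{B_{Lr_k}}V_k e^{6u_k}dy\to\Lambda_1$ as $k\to\infty$ then $L\to\infty$) with \eqref{limcurv} and monotonicity, using $\theta_{4,k}/r_k\to\infty$ from Lemma \ref{akbk}(i). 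With $\Lambda_1/\omega_5=128$ and $\theta_{4,k}^4\beta_k\to\tfrac{1}{12}$ from Lemma \ref{akbk}(ii), this gives
\begin{equation*}
-\Delta^2 u_k(r)=384\,\beta_k(1+o(1))-\frac{32}{r^4}(1+o(1))\qquad\text{uniformly on }(\theta_{4,k},\tau].
\end{equation*}

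Substituting into the Fubini identity and computing the elementary antiderivatives of $r-r^5/\tau^4$ and of $r^{-3}-r/\tau^4$, the leading contribution is $384\beta_k\cdot(\tau^2/3)=128\beta_k\tau^2$, while the boundary corrections at $r=\theta_{3,k}$ are of size $\beta_k\theta_{3,k}^2$ and $\theta_{3,k}^{-2}$; since $\theta_{3,k}\approx\theta_{4,k}$ (Lemma \ref{akbk}(iv)) forces $\theta_{3,k}^4\beta_k=O(1)$, both terms are $O(\beta_k^{1/2})=o(\beta_k)$. This yields $\Delta u_k(\tau)-\Delta u_k(\theta_{3,k})=-32\beta_k\tau^2+o(\beta_k)$, and combining with the limit of $\Delta u_k(\tau)/\beta_k$ gives $\Delta u_k(\theta_{3,k})/\beta_k\to 24=\Delta\varphi(0)$. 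The main obstacle is the inner window $r\in(\theta_{3,k},\xi)$, where the $C^5_{\loc}$-convergence \eqref{ukbetak} alone does not provide an asymptotic for $\Delta^2 u_k$; the sharper expansion above, valid uniformly down to the scale $\theta_{4,k}$, is what allows one to absorb both the $1/r^4$ correction and the boundary contributions using the size estimates from Lemma \ref{akbk}.
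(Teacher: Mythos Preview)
Your argument is correct. The Fubini identity is valid (the vanishing of $\int_{B_{\theta_{3,k}}}\Delta^2 u_k$ is exactly the critical-point condition), the uniform asymptotic $-\Delta^2 u_k(r)=384\beta_k-32r^{-4}+o(\beta_k)$ on $(\theta_{4,k},\tau]$ follows from \eqref{etaklog}, \eqref{limcurv} and Lemma \ref{akbk}(i)--(ii) as you indicate, and the boundary terms at $r=\theta_{3,k}$ are $O(\beta_k\theta_{3,k}^2)+O(\theta_{3,k}^{-2})=O(\beta_k^{1/2})=o(\beta_k)$ thanks to Lemma \ref{akbk}(ii),(iv).

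The paper takes a different and considerably shorter route, splitting the claim into two inequalities. For the lower bound it simply observes that $\theta_{3,k}$ is the global maximum of $\Delta u_k$ (Lemma \ref{monotone}), so $\Delta u_k(\theta_{3,k})\ge\Delta u_k(r)$ for any fixed $r$, and \eqref{ukbetak} gives $\Delta u_k(r)/\beta_k\to 24-32r^2$; letting $r\to 0$ yields $\liminf\ge 24$. For the upper bound it picks the second zero $\tilde\theta_{2,k}\to\tfrac{\sqrt3}{2}$ of $\Delta u_k$ (located via \eqref{ukbetak}), writes $\Delta u_k(\theta_{3,k})=\int_{\theta_{3,k}}^{\tilde\theta_{2,k}}(\omega_5 t^5)^{-1}\int_{B_t}(-\Delta^2 u_k)$, and bounds $-\Delta^2 u_k\le -\Delta^2 u_k(\tilde\theta_{2,k})$ by monotonicity; this gives $\Delta u_k(\theta_{3,k})\le -\Delta^2 u_k(\tilde\theta_{2,k})\,\tfrac{\tilde\theta_{2,k}^2-\theta_{3,k}^2}{12}=(24+o(1))\beta_k$. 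Thus the paper only needs $\theta_{3,k}\to 0$ and the monotonicity of $\Delta^2 u_k$, whereas your approach relies more heavily on Lemma \ref{akbk}. On the other hand, your method is genuinely two-sided (one computation, no separate inequalities), and the uniform expansion of $\Delta^2 u_k$ you derive is essentially the same ingredient that the paper develops a few lines later in the proof of \eqref{ukglobal}.
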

\begin{proof}
Since $\D \vp(\frac{\sqrt{3}}{2})=0$, there exists a sequence $\tilde\theta_{2,k}\to\frac{\sqrt{3}}{2}$ such that $\D u_k(\tilde\theta_{2,k})=0$.  Hence, by Lemma \ref{monotone} we infer
$$0\le -\Delta^2 u_k(r)\le -\Delta^2 u_k(\tilde\theta_{2,k}),\quad \text{for }\theta_{3,k}\le r\le \tilde\theta_{2,k},$$
and with \eqref{ukbetak} we get 
\begin{align}
 \D u_k(\theta_{3,k})&=\int_{\theta_{3,k}}^{\tilde\theta_{2,k}}\frac{1}{\omega_5t^5}\int_{B_t}(-\D^2u_k(x))dxdt\notag
 \leq -\D^2 u_k(\tilde\theta_{2,k})\frac{\tilde\theta_{2,k}^2-\theta_{3,k}^2}{12}\notag
 =(24+o(1))\beta_k.\notag
\end{align}
The lemma follows immediately by \eqref{ukbetak}.
\end{proof}


\paragraph{Proof of \eqref{ukglobal} and \eqref{betakuk} (completed).}
According to Lemma \ref{akbk} we have $\theta_{2,k}=o(\theta_{4,k})$, hence we can choose $s_k\in(\theta_{2,k},\theta_{4,k})$  such that $\theta_{2,k}=o(s_k)$ and $s_k=o(\theta_{4,k})$. We claim that
$$\D^2u_k(r)=(1+o(1))\D^2\bar\eta_k(r),\quad 0\leq r\leq s_k.$$ 
Indeed assume by contradiction that there exists $\mu_k\in [0,s_k]$ such that $$|\D^2 u_k(\mu_k)-\D^2\bar\eta_k(\mu_k)|\geq \ve |\D^2\bar\eta_k(\mu_k)|\quad\text{for some }\ve>0.$$ It follows from \eqref{betakuk(0)} that $r_k=o(\mu_k)$.  Therefore, by Lemma \ref{2.6} $$\D^2 u_k(\mu_k)=\frac{32+o(1)}{\mu_k^4},$$
and since
$$\D^2 \bar\eta_k(\mu_k)=\frac{32+o(1)}{\mu_k^4},$$
we get a contradiction.
Therefore, for $0\leq r\leq s_k$
  \begin{align*}
  \D u_k(r)&=\int^{r}_{\theta_{2,k}}\frac{1}{\omega_5t^5}\int_{B_t}\D^2u_k(y)dydt\\
  &=\int^{r}_{\theta_{2,k}}\frac{1}{\omega_5t^5}\int_{B_t}(1+o(1))\D^2\bar\eta_k(y)dydt\\
  &=(1+o(1))(\D\bar\eta_k(r)-\D\bar\eta_k(\theta_{2,k}))\\ 
  &=(1+o(1))(\D\bar\eta_k(r)+24\beta_k) +o(\beta_k),
  \end{align*}
  where the last equality follows from $iii)$ of Lemma \ref{akbk} and  $\D\bar\eta_k(\theta_{2,k})\theta_{2,k}^2\to -8$. 
  
 We now claim that
$$  \D u_k(x)=(1+o(1))(\D\bar \eta_k(x) +\beta_k\D\vp(x))+o(\beta_k) \quad\text{on any compact set }K\Subset B_R.$$  
Indeed assume by contradiction that  for some $\mu_k\in K$ $$|\D u_k(\mu_k)-\D \bar\eta_k(\mu_k)-\beta_k\D\vp(\mu_k)|\geq \ve(|\D\bar\eta_k(\mu_k)|+\beta_k).$$  From the first part we have that $\mu_k\geq s_k.$  As $\beta_k\approx \theta_{2,k}^{-2}$ and $\theta_{2,k}=o(s_k)$, we must have $\D \bar\eta_k(\mu_k)=o(\beta_k)$. By \eqref{ukbetak} we get $\mu_k\to0$. Hence $$|\D u_k(\mu_k)-\beta_k\D\vp(\mu_k)|\geq \frac{\ve}{2}\beta_k.$$ It follows from Lemma \ref{max-laplacian} that $$\D u_k(\mu_k)\leq 24\beta_k-\frac{\ve}{4}\beta_k,$$ and in fact, $\mu_k<\theta_{3,k}$, thanks to the monotonicity of $\D u_k$ and \eqref{ukbetak}. Using that {$\theta_{2,k}=o(s_k)=o(\mu_k)$, and recalling that $\beta_k\approx \theta_{2,k}^{-2}$ and } $\D^2u_k(x)\leq\frac{C}{|x|^4}$ we obtain 
$$\frac{\ve+o(1)}{4}\beta_k\leq \D u_k(\theta_{3,k})-\D u_k(\mu_k)=\int_{{\mu_k}}^{\theta_{3,k}}\frac{1}{\omega_5t^5}\int_{B_t}\D^2 u_k(x)dxdt\leq \frac{C}{\mu_k^2}=o(\beta_k),$$ a contradiction.

Therefore given $\delta\in (0,R)$ fixed, and using $u_k(0)=\bar\eta_k(0)$, we get for  $r\in (0,\delta)$
\begin{align}
u_k(r)&=u_k(0)+\int^{r}_{0}\frac{1}{\omega_5t^5}\int_{B_t} (1+o(1))(\D\bar \eta_k(x) +\beta_k\D\vp(x))dxdt+o(\beta_k) \notag\\
&=u_k(0)+(1+o(1))(\bar\eta_k(r)-\bar\eta_k(0)+\beta_k(\vp(r)-\vp(0))) +o(\beta_k+u_k(0)) \notag\\
&=\bar\eta_k(r)+\beta_k(\vp(r)+1)+o(\beta_k+u_k(0)).\label{uk}
\end{align}
By \eqref{ukbetak} we have $u_k(\tfrac12)=(1+o(1))\beta_k\vp(\frac12)$, and hence, from \eqref{uk} we infer
$$\beta_k+\bar\eta_k(\tfrac12) +o(\beta_k+u_k(0))=0 .$$
Since  $\bar\eta_k(\tfrac12)=(-1+o(1))u_k(0)$, \eqref{betakuk} follows at once. Then \eqref{ukglobal} follows from \eqref{uk}.
\hfill $\square$

\subsection{Linearization and proof of \eqref{etakexp}-\eqref{stimapsi}}

Let $\eta_k$ and $\eta$ be as in \eqref{etaklog}. 
We set 
\begin{align}\label{defpsik}
\psi_k(x):=\frac{1}{\tilde\ve_k}\left(\eta_k(x)-\eta(x)\right),
\end{align}
where {(notice that $r_k^2\sqrt{\beta_k}\to 0$ by \eqref{betakuk(0)})}
\begin{align}\label{epsilonk}
\tilde\ve_k:=\max\left\{|\D \eta(0)-\D \eta_k(0)|,\,|\D^2 \eta(0)-\D^2 \eta_k(0)|,r_k^2\sqrt{\beta_k}\right\}\xrightarrow{k\to\infty}0.
\end{align}
We will show later that $\tilde \ve_k\approx r_k^2\beta_k\approx \ve_k:=u_k(0)e^{-2u_k(0)}$. 
For any $R_0>0$ we have
$$|\tilde \ve_k\psi_k(x)|=|\eta_k(x)-\eta(x)|=o(1)\quad \text{on }B_{R_0}.$$
Therefore for $x\in B_{R_0}$, using \eqref{assVk}, so that
$$\tilde V_k(x):=V_k(r_kx)=120+O(r_k^2),$$
we compute with a Taylor expansion
 \begin{align*}
(-\D)^3\psi_k&=\frac{e^{6\eta}}{\tilde\ve_k}\left[\tilde V_ke^{6\ve_k\psi_k}-120\right]\\
&=\frac{e^{6\eta}}{\tilde \ve_k}\left[(120+O(r_k^2))(1+6\tilde\ve_k\psi_k+o(\ve_k\psi_k))-120\right]\\
&={720}e^{6\eta}\psi_k(1+o(1))+o(1),
\end{align*}
where we also used that $O(\tilde\ve_k^{-1}r_k^2)=o(1)$.
Since
$$\psi_k(0)=0,\quad |\Delta \psi_k(0)|\le 1,\quad |\Delta^2\psi_k(0)|\le 1, $$
by ODE theory $\psi_k$ converges up to a subsequence 
to $\psi$ in $C^5_{\loc}(\R^6)$ where $\psi$ is a radial solution to
\begin{equation}\label{eqpsi}
(-\D)^3\psi=720\psi e^{6\eta}\quad\text{in }\R^6,
\end{equation}
with $\psi(0)=0$.

The following proposition collects some crucial properties about the solutions to \eqref{eqpsi}. We shall prove it in Section \ref{proofpsi}.

\begin{prop}\label{psi-vol}
Let $\psi$ be a radial solution to \eqref{eqpsi}. Then 
\begin{equation}\label{psiasym}
\psi(x)=P(|x|)-\alpha\log|x|+o(\log|x|),\quad \text{as } |x|\to\infty,
\end{equation}
where $P(r)=ar^2+br^4+d$ for some $a,b,d\in\R$, $o(\log|x|)$ satisfies
\begin{equation}\label{psiasymbis}
\nabla^j o(\log|x|)=o(|x|^{-j}) \quad \text{as }|x|\to\infty,\quad \text{for }1\le j\le5,
\end{equation}
and
$$\alpha=\frac{720}{\gamma_6}\int_{\R^6}\psi(y)e^{6\eta(y)}dy=6a+48b.$$
Finally, if $a=b=0$, then $\psi(r)=\gamma\frac{1-r^2}{1+r^2}$ for some $\gamma\in \R$.
\end{prop}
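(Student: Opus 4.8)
The plan is to analyze \eqref{eqpsi} via the classical method of studying the growth of spherical averages and integrating the equation, exploiting that the linearized operator around $\eta$ has an explicit kernel. First I would record that the only radially symmetric solutions to the homogeneous equation $(-\Delta)^3 w=0$ in $\R^6$ are spanned by $\{1,|x|^2,|x|^4,|x|^{-2},|x|^{-4},\log|x|\}$, and that $w_0(r):=\frac{1-r^2}{1+r^2}$ solves the linearized equation \eqref{eqpsi}; indeed $w_0$ arises (up to normalization) by differentiating the family $\eta_\lambda(x)=\log\frac{2\lambda}{1+\lambda^2|x|^2}$ in $\lambda$ at $\lambda=1$, which automatically produces a bounded kernel element. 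The strategy is then to use variation of parameters / iterated integration of the ODE $(-\Delta)^3\psi=720\,\psi e^{6\eta}$, noting that $720 e^{6\eta(r)}=\frac{2^6\cdot 720}{(1+r^2)^6}\lesssim (1+r)^{-12}$ decays fast enough to make all the relevant integrals converge.

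The key steps, in order, are: (1) Write $\Delta^2\psi$ as a radial integral of $-720\,\psi e^{6\eta}$ using \eqref{w'}-type formulas, and use a bootstrap on the pointwise growth of $\psi$: a priori $\psi$ could grow polynomially, but plugging any polynomial growth bound $|\psi(r)|\le C(1+r)^N$ into $\int_{B_r}\psi e^{6\eta}$ and iterating shows the integral $\int_{\R^6}\psi(y)e^{6\eta(y)}dy=:\frac{\gamma_6}{720}\alpha$ converges absolutely, so $\Delta^2\psi(x)\to 0$ is controlled and in fact $\Delta^2\psi(r)=O(r^{-4})$. (2) Integrate once more: $\Delta\psi(r)=\Delta\psi(0)+\int_0^r\frac{1}{\omega_5 t^5}\int_{B_t}\Delta^2\psi\,dx\,dt$, and using the convergence of $\int_{\R^6}\Delta^2\psi = -\alpha\gamma_6/\gamma_6\cdot(\text{const})$ (by the divergence theorem applied to $(-\Delta)^3\psi=720\psi e^{6\eta}$) deduce $\Delta\psi(r)= c_1 + c_2 r^2 + O(\log r /r^2)$ for constants determined by $a,b$; tracking constants gives $\Delta\psi\sim \Delta P$ with $\Delta P = 6a+48br^2$ hidden in the coefficients, wait — more carefully, $\Delta(ar^2+br^4)=12a+24br^2$, and one gets the asymptotic $\Delta\psi(r)=12a+24br^2+o(1)$. (3) Integrate twice more to get \eqref{psiasym} with the logarithmic term: the coefficient $\alpha$ of $-\log|x|$ is pinned down by $\int_{B_r}\Delta^2\psi\,dx \to 720\int_{\R^6}\psi e^{6\eta}= \gamma_6\alpha$ (this is where $\alpha = \frac{720}{\gamma_6}\int_{\R^6}\psi e^{6\eta}dy$ comes from, by definition), and matching this against the asymptotics of $P(r)=ar^2+br^4+d$ forces $\alpha=6a+48b$ — this algebraic identity comes from computing $\lim_{r\to\infty}\frac{1}{\log r}\left(\text{stuff}\right)$ on both descriptions of $\psi$'s growth. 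The derivative estimates \eqref{psiasymbis} follow by differentiating the integral representations, since each derivative of $\int\log\frac{1}{|x-y|}\,\psi e^{6\eta}\,dy$ gains a power of $|x|^{-1}$ from the kernel. (4) Finally, if $a=b=0$ then $\psi$ is bounded (by \eqref{psiasym}), so $\psi$ is a bounded radial solution of the linearized equation; the space of such solutions is at most one-dimensional because a second independent bounded solution would, together with $w_0$, span a two-dimensional space of bounded solutions of a third-order-in-$\Delta$ (sixth order) ODE whose other four independent solutions are unbounded or singular at $0$ — a Wronskian/dimension count rules this out — hence $\psi$ is a multiple of $w_0(r)=\frac{1-r^2}{1+r^2}$.

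The main obstacle I anticipate is step (2)–(3): controlling the logarithmic corrections and pinning down the exact constant $\alpha=6a+48b$ rather than just $\alpha=$ (some linear combination). This requires careful bookkeeping of the six homogeneous solutions and which ones are excluded by the decay/regularity of $\psi$ at $0$ and $\infty$ — in particular ensuring the $|x|^{-2}$, $|x|^{-4}$ and $\log|x|$ homogeneous modes do not contaminate the leading $ar^2+br^4+d$ behaviour (regularity at the origin kills $|x|^{-2},|x|^{-4},\log|x|$ for the "homogeneous part", but the logarithm reappears from the inhomogeneity with coefficient exactly $\alpha$). A secondary subtlety is the a priori polynomial growth bound needed to start the bootstrap in step (1): since $\psi$ is obtained as a $C^5_{\loc}$ limit of $\psi_k$ with only $\psi_k(0)=0$ and bounded second-order data at $0$, one does not have growth control for free, so one must argue that any radial solution of \eqref{eqpsi} has at most polynomial growth — this follows by iterating the integral representation starting from the trivial bound coming from the equation itself, using that $e^{6\eta}$ decays like $r^{-12}$.
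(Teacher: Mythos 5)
Your overall architecture (integral representation, decomposition into a triharmonic polynomial plus a logarithmic potential, classification when $a=b=0$) matches the paper's, but two of your steps would fail as described. The critical one is the identity $\alpha=6a+48b$: you propose to get it by ``matching the log coefficient against the asymptotics of $P$,'' but this is circular. Integrating the equation over $B_r$ only yields $\omega_5 r^5(\Delta^2\psi)'(r)\to -720\int_{\R^6}\psi e^{6\eta}\,dy=-\gamma_6\alpha$, which is exactly the definition of $\alpha$ as the coefficient of $-\log|x|$ produced by the logarithmic potential; it gives no relation at all between $\alpha$ and the polynomial coefficients $a,b$. The relation $\alpha=6a+48b$ is an extra conservation law obtained by testing the equation against the explicit kernel element $\Psi(x)=\frac{1-|x|^2}{1+|x|^2}$ --- which you correctly identify as $\partial_\lambda\eta_\lambda|_{\lambda=1}$ but never use for this purpose. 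Since $\psi\Delta^3\Psi=\Psi\Delta^3\psi$, repeated integration by parts over $B_r$ leaves only boundary terms, and inserting the expansions of $\psi,\Delta\psi,\Delta^2\psi$ (note $\Delta(ar^2+br^4)=12a+32br^2$ in $\R^6$, not $12a+24br^2$) against the decay of $\Psi,\Delta\Psi,\Delta^2\Psi$ gives $384a+3072b-64\alpha=0$ in the limit $r\to\infty$. Without some such argument the key algebraic identity of the proposition is unproved.

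The second gap is your Step (4). A dimension count cannot show that the bounded radial solutions form a one-dimensional space: solutions regular at the origin already form a $3$-dimensional family, and the linear map sending the data $(\psi(0),\Delta\psi(0),\Delta^2\psi(0))$ to the growth coefficients $(a,b)$ at infinity could a priori have a $2$-dimensional kernel; a Wronskian argument does not exclude a second bounded solution independent of $\Psi$. This is a genuine non-degeneracy statement, which the paper obtains by quoting the classification of bounded solutions of the linearized equation in Wei--Xu (Theorem 2.6 there). Finally, your a priori bound $\psi=O(|x|^4)$ cannot be started ``from the trivial bound coming from the equation itself'' --- there is no such bound; the paper runs a Schauder fixed-point (equivalently Gronwall) argument in the weighted space with norm $\sup_{r\ge r_0}|\phi(r)|/(1+r^4)$, choosing $r_0$ so large that $\int_{|x|>r_0}e^{6\eta}(1+|x|^4)\,dx$ is small, and then identifies the fixed point with $\psi$ by ODE uniqueness. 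That part of your plan is repairable along these lines, but as written it is not a proof.
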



\begin{rem}\label{rem1}
Notice that $\psi\equiv 0$ if and only if   $\tilde\ve_k=r_k^2\sqrt{\beta_k}$ for $k$ large and $$\frac{1}{\tilde\ve_k}(|\D \eta(0)-\D \eta_k(0)|+|\D^2 \eta(0)-\D^2 \eta_k(0)|)\xrightarrow{k\to\infty}0.$$ 
\end{rem}

We now write 
\begin{align}
 \eta_k=\eta+\tilde\ve_k\psi+\tilde\ve_k\delta_k\phi_k,\label{defphik}
\end{align}
where 
\begin{align}\label{deltak}
       \delta_k:=\max\left\{\tilde\ve_k,\,\frac{1}{\sqrt{\beta_k}},\,|\D\psi_k(0)-\D\psi(0)|,\,|\D^2\psi_k(0)-\D^2\psi_k(0)|\right\}\xrightarrow{k\to\infty}0.
 \end{align}
Then
\begin{align*}
 (-\D)^3\phi_k&=\frac{e^{6\eta}}{\tilde\ve_k\delta_k}\left(\tilde V_ke^{6(\tilde\ve_k\psi+\tilde\ve_k\delta_k\phi_k)}-120-720\tilde\ve_k\psi\right) =:\Phi_k(\phi_k).
 \end{align*}
On any fixed ball $B_{R_0}\subset\R^6$ we have  $\tilde\ve_k\psi=o(1)$, $\tilde\ve_k\delta_k\phi_k=o(1)$, hence with a Taylor expansion we get
\begin{align}
\Phi_k(\phi_k)&= \frac{120e^{6\eta}}{\tilde\ve_k\delta_k}\left[  (1+O(r_k^2R_0^2))(1+6\tilde\ve_k\psi+6\tilde\ve_k\delta_k\phi_k+O((\tilde\ve_k\psi+\tilde\ve_k\delta_k\phi_k)^2))-1-6\tilde\ve_k\psi \right]\notag\\
&=720e^{6\eta}\left(\phi_k+o(\phi_k)+O(\psi^2) +O(R_0^2)\right). \label{PhiTaylor}
\end{align}
Then, since
$$\phi_k(0)=0,\quad |\Delta \phi_k(0)|\le 1,\quad |\Delta^2\phi_k(0)|\le 1, $$
by ODE theory the sequence $(\phi_k)$ is bounded in $C^5_{\loc}(\R^6)$. 

We now bound $\phi_k$ on large scales.

\begin{lem}\label{2.8}
 Let $s_k>0$ be such that $s_k:=o(1) \tilde\ve_k^{-\frac14}.$ Then 
 $$\sup_{r\in [0,s_k]}\left(|\D^2\phi_k(r)|+|\D\phi_k(r)|(1+r)^{-2}+|\phi_k(r)|(1+r)^{-4}\right)\le C, \quad\forall \,k.$$
\end{lem}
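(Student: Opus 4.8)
The plan is to prove the bound by a continuity (bootstrap) argument in the radial variable, exploiting the fast decay $e^{6\eta(y)}\le C(1+|y|)^{-12}$ together with the at most quartic growth of $\psi$ (Proposition \ref{psi-vol}) and the at most quartic growth of $\phi_k$ that we are trying to establish. Writing the equation $(-\D)^3\phi_k=\Phi_k(\phi_k)$ in integral form by applying \eqref{w'} three times, and recalling $\phi_k(0)=0$, $|\D\phi_k(0)|\le1$, $|\D^2\phi_k(0)|\le1$ and that $(\phi_k)$ is bounded in $C^5_{\loc}(\R^6)$, I fix a large radius $R_0$ and a large constant $\Lambda_0$ (to be chosen, $R_0$ first) and set
$$T_k:=\sup\Big\{\rho\in[0,s_k]:\ \sup_{[0,\rho]}\big(|\D^2\phi_k|+(1+\cdot)^{-2}|\D\phi_k|+(1+\cdot)^{-4}|\phi_k|\big)\le\Lambda_0\Big\}.$$
The bracket equals $|\D^2\phi_k(0)|+|\D\phi_k(0)|\le2$ at the origin, so $T_k>0$; if along a subsequence $s_k$ stays bounded the lemma is immediate from the $C^5_{\loc}$-bound, so I assume $s_k\to\infty$ and, for contradiction, $T_k<s_k$, so that the bracket equals $\Lambda_0$ at $r=T_k$.

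On $[0,T_k]$ we then have $|\phi_k(y)|\le\Lambda_0(1+|y|)^4$. The calibration $s_k=o(1)\tilde\ve_k^{-\frac14}$ gives $\tilde\ve_k(1+s_k)^4=o(1)$ and $r_ks_k=o(1)$, so (using also $\delta_k,\tilde\ve_k\to0$) for $k$ large the exponent $6\tilde\ve_k\psi+6\tilde\ve_k\delta_k\phi_k$ has absolute value $\le1$ on $[0,T_k]$ and $\tilde V_k(y)=V_k(r_ky)=120+O(r_k^2|y|^2)$ there; hence the Taylor expansion \eqref{PhiTaylor} is valid on all of $[0,T_k]$ and gives $\Phi_k(\phi_k)=720\,e^{6\eta}\phi_k+E_k$ with $|E_k(y)|\le C(1+|y|)^{-4}$, where the bound collects the terms $\tfrac{\tilde\ve_k}{\delta_k}e^{6\eta}O(\psi^2+\delta_k^2\Lambda_0^2(1+|y|)^8)$ and $\tfrac{r_k^2}{\tilde\ve_k\delta_k}e^{6\eta}O(|y|^2)$ (plus lower-order ones), bounded via $\tilde\ve_k/\delta_k\le1$ and $\tilde\ve_k\delta_k\ge r_k^2$, which follow from $\delta_k\ge\max\{\tilde\ve_k,\beta_k^{-1/2}\}$ and $\tilde\ve_k\ge r_k^2\sqrt{\beta_k}$. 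Since $\int_0^\infty t^{-5}\int_{B_t}(1+|y|)^{-m}\,dy\,dt<\infty$ for every $m>2$, the contribution of $E_k$ to the two radial integrations is bounded by a constant $C_\ast$ uniform in $k$.

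For the main term $720\,e^{6\eta}\phi_k$ I split $B_t=B_{R_0}\cup(B_t\setminus B_{R_0})$: on $B_{R_0}$ the $C^5_{\loc}$-bound gives $|\Phi_k(\phi_k)|\le C(R_0)$, contributing $\le C(R_0)$; on $B_t\setminus B_{R_0}$ (with $t\le T_k$) the bootstrap bound gives $|720\,e^{6\eta}\phi_k|\le C\Lambda_0(1+|y|)^{-8}$, whose radial integrals are $\le C\Lambda_0 R_0^{-6}$. Thus $|\D^2\phi_k(r)|\le 1+C(R_0)+C_\ast+C\Lambda_0 R_0^{-6}$ on $[0,T_k]$, and integrating twice more (with $|\int_{B_t}w|\le\|w\|_{L^\infty(B_t)}|B_t|$) upgrades this to
$$\sup_{[0,T_k]}\big(|\D^2\phi_k|+(1+\cdot)^{-2}|\D\phi_k|+(1+\cdot)^{-4}|\phi_k|\big)\le C_1(R_0)+C_2\Lambda_0 R_0^{-6},$$
with $C_1(R_0),C_2$ independent of $k$. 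Choosing $R_0$ so that $C_2R_0^{-6}\le\tfrac12$ and then $\Lambda_0:=4C_1(R_0)$, the right-hand side is $\le\tfrac34\Lambda_0<\Lambda_0$, contradicting the maximality of $T_k<s_k$. Hence $T_k=s_k$ for $k$ large, which is the assertion with $C=\Lambda_0$.

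The main obstacle is that $\Phi_k(\phi_k)$ is, to leading order, $720\,e^{6\eta}\phi_k$, so a direct Gronwall estimate produces a feedback constant that is $>1$ and fails to close; this is remedied by the localization splitting $B_t=B_{R_0}\cup(B_t\setminus B_{R_0})$, which makes the feedback coefficient $\le C_2R_0^{-6}$ arbitrarily small at the cost of an $R_0$-dependent additive constant — this is why a bootstrap, rather than a direct estimate or a blow-up contradiction, is the right scheme (a blow-up with a normalization constant $N_k\to\infty$ would not control $|\tilde\ve_k\delta_k\phi_k|$ on the growing interval). The second delicate point is ensuring the Taylor expansions of $e^{6(\tilde\ve_k\psi+\tilde\ve_k\delta_k\phi_k)}$ and of $\tilde V_k$ are valid on the full growing interval $[0,T_k]$, which is exactly what the choice $s_k=o(1)\tilde\ve_k^{-\frac14}$ (together with the quartic weight in the statement) guarantees through $\tilde\ve_k(1+s_k)^4=o(1)$.
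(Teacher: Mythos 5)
Your proof is correct, and the core estimates are exactly those of the paper: the weighted norm $|\D^2\phi|+r^{-2}|\D\phi|+r^{-4}|\phi|$, the decomposition of $\Phi_k(\phi_k)$ into the feedback term $720e^{6\eta}\phi_k$ (whose contribution beyond $B_{R_0}$ carries a coefficient that is small for $R_0$ large) plus an error bounded by $C(1+|y|)^{-4}$, and the order of choices $R_0$ first, then the norm bound, then $k$ large. Where you differ is in how the a priori estimate is closed: the paper works on $[R_0,s_k]$ and runs a Schauder fixed-point argument for the solution map $T:\mathcal{B}_M\to\mathcal{B}_M$ with Cauchy data prescribed at $R_0$, then identifies the fixed point with $\phi_k$ by ODE uniqueness; you instead run a continuity (bootstrap) argument on $[0,T_k]$ directly on $\phi_k$, deriving a strict improvement $\le\frac34\Lambda_0$ of the saturated bound $\Lambda_0$ at $T_k$. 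Your route is more elementary — it avoids the compactness of $T$ and the uniqueness step needed to identify $\phi_*$ with $\phi_k$ — at the cost of having to verify that the Taylor expansion and the bound $|\phi_k|\le\Lambda_0(1+|y|)^4$ are simultaneously valid up to the a priori unknown radius $T_k$, which you do correctly via $\tilde\ve_k(1+s_k)^4=o(1)$, $\tilde\ve_k\le\delta_k$ and $\tilde\ve_k\delta_k\ge r_k^2$. Both arguments deliver the same uniform constant; the only cosmetic discrepancy is your feedback coefficient $R_0^{-6}$ versus the paper's more conservative $R_0^{-4}$, which is immaterial.
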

\begin{proof}
Let $R_0>1$ to be fixed later.  We set 
\begin{align*}
X&:=\left\{\phi\in C^4([R_0,s_k]): \|\phi\|_X<\infty\right\},\\
\|\phi\|_X&:=\sup_{r\in [R_0,s_k]}\left(|\D^2\phi(r)|+|\D\phi(r)|r^{-2}+|\phi(r)|r^{-4}\right).
\end{align*}
Thanks to Proposition \ref{psi-vol} we have
$$\sup_{[R_0,s_k]}\tilde\ve_k\psi=o(1).$$
Moreover given a constant $M\ge 1$ to be fixed later, we set 
$$\mathcal{B}_M:=\{\phi\in X:\|\phi\|_X\le M\}.$$
Then we have
$$\sup_{[R_0,s_k]} \tilde\ve_k\delta_k\phi=o(1)\quad \text{for }\phi\in \mathcal{B}_M.$$
Therefore the same Taylor expansion used in \eqref{PhiTaylor} leads to
$$\Phi_k(\phi)(r)=720e^{6\eta(r)}\left(\phi(r)+o(\phi)+O(\psi^2(r)) +O(r^2)\right)\quad \text{for }\phi\in \mathcal{B}_M.$$
We now fix $k$ and define $T:X\to X$, $\phi\mapsto\bar\phi$ where $\bar\phi$ is the unique solution to the ODE
 $$(-\D)^3\bar\phi=\Phi_k(\phi)\quad\text{on }(R_0,s_k),\quad \bar\phi^{(j)}(R_0)=\phi_k^{(j)}(R_0),\quad j=0,1,\dots,5.$$ 
Using that
\begin{align*}
|\phi(r)+o(\phi(r))+O(\psi^2(r)) +O(r^2)|&\leq C(\|\phi\|_X r^4+r^8)\quad\text{ on }(R_0,s_k)
\end{align*}
we infer
$$|\Phi_k(\phi)(r)|= \frac{O(M)}{(1+r)^8}+\frac{O(1)}{(1+r)^4}\quad \text{for }r\in [R_0,s_k],\;\phi\in\mathcal{B}_M.$$
Then also using
$$|\phi_k^{(j)}(R_0)|\le C(R_0) \quad \text{ for } 0\le j\le 5,$$
and \eqref{int-repre} we bound uniformly for $r\in [R_0,s_k]$ and $\phi\in \mathcal{B}_M$
\begin{align*}
|\Delta^2\bar\phi(r)|&\le |\Delta^2 \bar\phi(R_0)|+{ R_0^5(\D^2\bar\phi)'(R_0)\int_{R_0}^r\frac{d\rho}{\rho^5}} +\int_{R_0}^r\frac{1}{\omega_5\rho^5}\int_{B_\rho{\setminus B_{R_0}}}|\Phi_k(\phi)(x)|dxd\rho\\
&\le C(R_0)+\int_{R_0}^r\frac{1}{\omega_5\rho^5}\int_{B_\rho}{\bra{\frac{O(M)}{(1+|x|)^8}+\frac{O(1)}{(1+|x|)^4}} }dxd\rho\\&\le C_1(R_0)+O(MR_0^{-4}).
\end{align*}
Similarly
\begin{align*}
|\Delta\bar\phi(r)|&\le |\Delta \bar\phi(R_0)|+ R_0^5(\D\bar\phi)'(R_0)\int_{R_0}^r\frac{d\rho}{\rho^5}+\int_{R_0}^r\frac{1}{\omega_5\rho^5}\int_{B_\rho\setminus B_{R_0}}|C_1(R_0)+O(MR_0^{-4})|dxd\rho\\
&\le (C_2(R_0)+O(MR_0^{-4}))r^2
\end{align*}
and integrating once more
$$|\bar\phi(r)|\le (C_3(R_0)+O(MR_0^{-4}))r^4.$$
Therefore,
 $$\|\bar\phi\|_X\leq C_4(R_0)+C_5MR_0^{-4}\quad \text{for }\phi\in \mathcal{B}_M.$$
Now we fix  $R_0>1$ so that $|C_5R_0^{-4}|\leq \frac12$. Then for $M\ge 2C_4(R_0)$ one has 
 $$\|\bar\phi\|_X\leq \frac M2+\frac{M}{2}\leq M,\quad \text{for every }\phi\in\mathcal{B}_M,$$
 i.e. $T$ sends the convex set $\mathcal{B}_M$ into itself. Then, by the Schauder fixed-point theorem (notice that $T$ is compact, as one gets easily bound on the fifth order derivative of $ \bar\phi$), $T$ has a fixed point   $\phi_*$ in $X$ with $\|\phi_*\|_X\le M$, that is, $\phi_*$ satisfies 
  $$(-\D)^3\phi_*=\Phi_k(\phi_*)\quad\text{on }(R_0,s_k),\quad \phi_*^{(j)}(R_0)=\phi_k^{(j)}(R_0),\quad j=0,1,\dots,5.$$ Therefore, from the uniqueness of solution $\phi_*=\phi_k|_{[R_0,s_k)}$, and  the lemma follows from the estimate $\|\phi_k\|_X\le M$. 
\end{proof}

\begin{lem}\label{vol-psik} 
We have
\begin{equation}\label{stimeinteg}
\int_{B_r}\left(\tilde V_ke^{6\eta_k}-120 e^{6\eta}-720\tilde\ve_k\psi e^{6\eta}\right)dx=o(\tilde\ve_k),
\end{equation}
uniformly for $r\in (0,\tfrac{1}{10r_k})$, where $\tilde V_k(x):=V_k(r_kx)$.
\end{lem}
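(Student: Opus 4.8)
The plan is to split $B_r$ at an intermediate scale $s_k$, treat the inner ball $B_{s_k}$ — where the linearization is under control — by a Taylor expansion, and the remaining annulus $B_r\setminus B_{s_k}$ by a direct curvature estimate exploiting the monotonicity of $u_k$. The scale $s_k$ must be large enough that the tails of the ``model'' terms $120\,e^{6\eta}$ and $\tilde\ve_k\psi e^{6\eta}$ outside $B_{s_k}$ are negligible (i.e. $s_k^{-6}=o(\tilde\ve_k)$), yet small enough that Lemma \ref{2.8} applies on $[0,s_k]$ (i.e. $s_k=o(\tilde\ve_k^{-1/4})$); the window $\tilde\ve_k^{-1/6}\ll s_k\ll\tilde\ve_k^{-1/4}$ is nonempty, and I would take $s_k:=\tilde\ve_k^{-1/5}$. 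Using $\tilde\ve_k\ge r_k^2\sqrt{\beta_k}$ from \eqref{epsilonk}, together with $\theta_{1,k}\approx\theta_{2,k}\approx\beta_k^{-1/2}$ (Lemma \ref{akbk}), $t_k<\theta_{1,k}\le Ct_k$ (Lemma \ref{neck1}) and $\beta_k=u_k(0)(1+o(1))$ from \eqref{betakuk}, one checks that for $k$ large $c_p r_k<r_ks_k=o(\theta_{1,k})$, $r_ks_k<t_k$, $s_k<\tfrac1{10r_k}$, and that $s_k^{-6}$, $\tilde\ve_k s_k^{-2}$, $\tilde\ve_k^2 s_k^2$ and $r_k^2$ are all $o(\tilde\ve_k)$ while $\tilde\ve_k s_k^4=o(1)$. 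Throughout, write $F_k:=\tilde V_ke^{6\eta_k}-120\,e^{6\eta}-720\tilde\ve_k\psi e^{6\eta}$ for the integrand in \eqref{stimeinteg}.

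\emph{Inner ball.} On $B_{s_k}$ I would use \eqref{defphik}, writing $\eta_k-\eta=\tilde\ve_k w$ with $w:=\psi+\delta_k\phi_k$; by Proposition \ref{psi-vol} and Lemma \ref{2.8} one has $|\psi|,|\phi_k|\le C(1+|x|^4)$ there, hence $|\eta_k-\eta|\le C\tilde\ve_k(1+|x|^4)=o(1)$ on $B_{s_k}$. Then $F_k=(\tilde V_k-120)e^{6\eta_k}+120\,e^{6\eta}\bigl(e^{6\tilde\ve_k w}-1-6\tilde\ve_k\psi\bigr)=(\tilde V_k-120)e^{6\eta_k}+720\tilde\ve_k\delta_k\phi_ke^{6\eta}+O(\tilde\ve_k^2 w^2)e^{6\eta}$ pointwise on $B_{s_k}$. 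Since $r_ks_k\to0$, \eqref{assVk} gives $|\tilde V_k-120|\le Cr_k^2|x|^2$ there, and $|\eta_k-\eta|=o(1)$ gives $e^{6\eta_k}\le Ce^{6\eta}$; integrating and using $e^{6\eta(x)}=2^6(1+|x|^2)^{-6}$ and $\int_{B_{s_k}}(1+|x|)^{-4}dx\le Cs_k^2$, the three contributions are bounded by $Cr_k^2\int_{\R^6}|x|^2e^{6\eta}\,dx$, $C\tilde\ve_k\delta_k\int_{\R^6}(1+|x|^4)e^{6\eta}\,dx$ and $C\tilde\ve_k^2 s_k^2$, i.e. $O(r_k^2)+O(\tilde\ve_k\delta_k)+O(\tilde\ve_k^2 s_k^2)=o(\tilde\ve_k)$. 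Hence $\int_{B_{\min(r,s_k)}}|F_k|\,dx=o(\tilde\ve_k)$, uniformly in $r$.

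\emph{Outer annulus.} For $s_k<r<\tfrac1{10r_k}$ the two model terms are negligible by the explicit decay of $\eta$ and the polynomial growth of $\psi$: $0\le\int_{B_r\setminus B_{s_k}}120\,e^{6\eta}\,dx\le Cs_k^{-6}=o(\tilde\ve_k)$ and $\tilde\ve_k\bigl|\int_{B_r\setminus B_{s_k}}720\,\psi e^{6\eta}\,dx\bigr|\le C\tilde\ve_k s_k^{-2}=o(\tilde\ve_k)$. For the curvature term, by positivity it suffices to bound $\int_{B_{1/(10r_k)}\setminus B_{s_k}}\tilde V_ke^{6\eta_k}\,dx=\int_{B_{1/10}\setminus B_{r_ks_k}}V_ke^{6u_k}\,dy$, and I would split $(r_ks_k,\tfrac1{10})$ at $t_k$ and $\theta_{1,k}$. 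On $(r_ks_k,t_k)$, where $\rho^pe^{u_k(\rho)}$ is decreasing for a fixed $p\in(1,2)$ (Lemma \ref{neck1}\,i)), one gets $\int_{B_{t_k}\setminus B_{r_ks_k}}V_ke^{6u_k}\le C(r_ks_k)^6e^{6u_k(r_ks_k)}=Cs_k^6e^{6\eta_k(s_k)}\le Cs_k^6e^{6\eta(s_k)}\le Cs_k^{-6}$, using $\eta_k(s_k)=\eta(s_k)+o(1)$ from the inner estimate. On $(t_k,\theta_{1,k})$, $u_k$ is decreasing (Lemma \ref{monotone}) and $\theta_{1,k}\le Ct_k$, so the integral is $\le C(t_ke^{u_k(t_k)})^6$, and since $\rho e^{u_k(\rho)}$ is also decreasing on $(c_pr_k,t_k)$, $t_ke^{u_k(t_k)}\le(r_ks_k)e^{u_k(r_ks_k)}=s_ke^{\eta_k(s_k)}\le C/s_k$, giving again $\le Cs_k^{-6}$. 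On $(\theta_{1,k},\tfrac1{10})$, $u_k$ is increasing and $\tilde\theta_{1,k}\to1$, so the integral is $\le Ce^{6u_k(1/10)}$; since $\bar\eta_k(\tfrac1{10})\le-u_k(0)+C$ and $\vp(\tfrac1{10})=-(99/100)^2<-\tfrac9{10}$, the global expansion \eqref{ukglobal} yields $u_k(\tfrac1{10})\le C+u_k(0)(\vp(\tfrac1{10})+o(1))\le-\tfrac12u_k(0)$ for $k$ large, whence $e^{6u_k(1/10)}\le e^{-3u_k(0)}=o(\tilde\ve_k)$ (recall $\tilde\ve_k\ge r_k^2\sqrt{\beta_k}\ge4e^{-2u_k(0)}$). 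Summing the three pieces gives $\int_{B_r\setminus B_{s_k}}F_k\,dx=o(\tilde\ve_k)$, uniformly in $r$; combined with the inner estimate, \eqref{stimeinteg} follows.

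The main obstacle is the outer annulus: Lemma \ref{2.8} controls the linearization only up to scale $s_k$, which lies far below $\tfrac1{10r_k}$, so beyond $B_{s_k}$ one cannot use $\eta_k\approx\eta+\tilde\ve_k\psi$ and must instead combine the Robert-type $\rho^pe^{u_k}$-monotonicity on the neck $(r_ks_k,\theta_{1,k})$ with the global expansion \eqref{ukglobal} on the far part — which in turn forces the delicate choice of $s_k$ inside the window $\tilde\ve_k^{-1/6}\ll s_k\ll\tilde\ve_k^{-1/4}$ so that $\int_{B_{s_k}^c}120\,e^{6\eta}=O(s_k^{-6})$ is $o(\tilde\ve_k)$ while Lemma \ref{2.8} still applies.
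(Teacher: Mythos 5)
Your proposal is correct and follows essentially the same route as the paper: the same intermediate scale $s_k=\tilde\ve_k^{-1/5}$, the same Taylor expansion of $\tilde V_ke^{6\eta_k}$ on $B_{s_k}$ via \eqref{defphik}, Lemma \ref{2.8} and \eqref{assVk}, the same tail estimates $O(s_k^{-6})$ and $O(\tilde\ve_k s_k^{-2})$ for the model terms, and the same treatment of the neck via the monotonicity of $r^pe^{u_k}$ from Lemma \ref{neck1}. The only (cosmetic) difference is that on the far region $(t_k,\tfrac1{10})$ you split at $\theta_{1,k}$ and invoke \eqref{ukglobal} at $r=\tfrac1{10}$, whereas the paper bounds $u_k$ there directly by $\max\{u_k(t_k),u_k(\tfrac1{10})\}\le-\tfrac78u_k(0)$ using Lemma \ref{neck1}~iii); both yield $o(\tilde\ve_k)$.
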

\begin{proof}
We prove the lemma in few steps. 

\noindent\textbf{Step 1} $u_k(r)\leq -\frac78 u_k(0)$ on $ (t_k,\frac{1}{10})$, where $t_k$ is as in Lemma \ref{neck1} for some $p\in (1,2)$. 

It follows from Lemma \ref{monotone},  $iii)$ of Lemma \ref{neck1}, \eqref{betakuk} and \eqref{ukbetak}  that  $$u_k(r)\leq \max \{u_k(t_k), u_k(\tfrac{1}{10})\}\leq -\tfrac78 u_k(0)\quad\text{for every }r\in (t_k,\tfrac{1}{10}),$$ for $k$ large. 

\noindent\textbf{Step 2} We set $s_k:=\tilde\ve_k^{-\frac15}$ and claim that \eqref{stimeinteg} holds for $r\in (0,s_k)$.

Indeed from Lemma \ref{2.8}
$$\eta_k=\eta+\tilde\ve_k\psi+\tilde\ve_k\delta_k\phi_k=\eta+\tilde\ve_kO(1+r^4)+\tilde\ve_k\delta_kO(1+r^4),\quad r\in(0,s_k),$$
which yields $\eta_k-\eta=o(1)$ on $(0,s_k)$. Therefore,  for every $r\in (0,s_k)$ 
\begin{align*}
\int_{B_r}\left(\tilde V_ke^{6\eta_k}-120e^{6\eta}\right)dx
&=  \int_{B_r}e^{6\eta}\left(\tilde V_ke^{6(\eta_k-\eta)}-120\right)dx\\
  &= \int_{B_r}e^{6\eta}\left((120+O(r_k^2|x|^2))(1+(6+o(1))(\eta_k-\eta)-120\right)dx\\
  &=\tilde\ve_k\int_{B_r}e^{6\eta}\left((720+o(1))\psi+o(\phi_k)+o(|x|^2)\right)dx\\
 &=720\tilde\ve_k\int_{B_r}e^{6\eta}\psi dx +\tilde\ve_k\int_{B_r}e^{6\eta}o(1+|x|^4)dx \\
  &=720\tilde\ve_k\int_{B_r}e^{6\eta}\psi dx +o(\tilde\ve_k).
 \end{align*}

\noindent\textbf{Step 3} We claim that
$$\int_{B_\frac{1}{10r_k}\setminus B_{s_k}}\tilde V_ke^{6\eta_k}dx=o(\tilde\ve_k).$$ 
We write  (if $\frac{t_k}{r_k}\leq s_k$ then the second integral $I_2$ is considered to be $0$) 
$$ \int_{B_\frac{1}{10r_k}\setminus B_{s_k}}\tilde V_ke^{6\eta_k}dx\leq \int_{B_\frac{1}{10r_k}\setminus B_\frac{t_k}{r_k}}\tilde V_ke^{6\eta_k}dx+ \int_{B_\frac{t_k}{r_k}\setminus B_{s_k}}\tilde V_ke^{6\eta_k}dx=:I_1+I_2.$$
By Step 1
$$I_1=\int_{B_\frac{1}{10}\setminus B_{t_k}}V_ke^{6u_k}dx\leq Ce^{-\frac{21}{4}u_k(0)}=o(r_k^5)=o(\tilde\ve_k).$$
Using that $r^pe^{u_k(r)}$ is monotone decreasing on $(c_pr_k,t_k)$, we bound 
\begin{align*}
 \int_{B_\frac{t_k}{r_k}\setminus B_{s_k}}\tilde V_ke^{6\eta_k}dx\leq C s_k^6e^{6\eta_k(s_k)}\leq C s_k^6e^{6\eta(s_k)}=O(s_k^{-6})=o(\tilde\ve_k),
\end{align*}
where in the last inequality we have used that $\eta_k(s_k)=\eta({s_k})+o(1)$ by Lemma \ref{2.8}.

\noindent\textbf{Step 4} To complete the proof it remains to show that
$$\int_{B_{s_k}^c}e^{6\eta}dx=o(\tilde\ve_k),\quad  \int_{B_{s_k}^c}\psi e^{6\eta}dx=o(1).$$
The first estimate follows from
\begin{align*}
 \int_{B_{s_k}^c}e^{6\eta}dx=\int_{B_{s_k}^c}\left(\frac{2}{1+|x|^2}\right)^6dx=O(s_k^{-6})=o(\tilde\ve_k).
\end{align*}
The second one follows from Proposition \ref{psi-vol} since $\psi(x)=O(|x|^4)$ as $|x|\to\infty$ implies
$$\int_{B_{s_k}^c}\psi e^{6\eta}dx=\int_{B_{s_k}^c}O\bra{|x|^{-8}}dx=O(s_k^{-2})=o(1).$$
\end{proof}

\begin{lem}\label{psi-nontrivial}
We have $\psi(r)=ar^2+O(\log r)$ as $r\to\infty$, where
\begin{equation}\label{formulaa}
a=\lim_{k\to\infty}\frac{2r_k^2\beta_k}{\tilde\ve_k}>0.
\end{equation}
\end{lem}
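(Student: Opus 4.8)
The plan is to identify the leading-order coefficient $a$ in the asymptotic expansion $\psi(r)=ar^2+br^4+d-\alpha\log r+o(\log r)$ from Proposition \ref{psi-vol} by matching the linearized equation against the quantization information already obtained for $u_k$. First I would show $b=0$: by Lemma \ref{2.8} we have $\eta_k=\eta+\tilde\ve_k\psi+\tilde\ve_k\delta_k\phi_k$ with $\phi_k=O(1+r^4)$ on $[0,s_k]$, and since $\eta_k-\eta=o(1)$ there, a $|x|^4$-term growing like $\tilde\ve_k b r^4$ would force $s_k=O(\tilde\ve_k^{-1/4})$ to be essentially sharp; more precisely I would compare $\D^2\eta_k$ with $\D^2\bar\eta_k$ via Lemma \ref{2.6} (recall $\D^2 u_k(x)=(32+o(1))|x|^{-4}$ on $B_{\tilde\xi_k}\setminus B_{\xi_k}$ whenever $r_k=o(\xi_k)$, $\tilde\xi_k=o(\theta_{4,k})$). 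Translating to the $\eta_k$ scale, $\D^2\eta_k(x)=(32+o(1))|x|^{-4}$ on the corresponding annulus, while $\D^2(\eta+\tilde\ve_k\psi)$ would pick up $\tilde\ve_k\,\D^2(br^4)=48b\tilde\ve_k$, a term that cannot be absorbed unless $b=0$. Hence $\psi(r)=ar^2+O(\log r)$.

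To compute $a$, I would integrate $-\D^3\psi=720\psi e^{6\eta}$ over $B_r$ using \eqref{w'} three times, exactly as in the proof of Proposition \ref{psi-vol}, to get $(\D^2\psi)'(r)\sim -\gamma_6^{-1}\omega_5^{-1}r^{-5}\int_{\R^6}720\psi e^{6\eta}=-\alpha\omega_5^{-1} r^{-5}$, hence $\D\psi(r)=6a+48b\,r^2+\dots$; with $b=0$ this gives $\D\psi(r)\to 6a$ as $r\to\infty$, i.e. $\lim_{r\to\infty}\D\psi(r)=6a$. On the other hand, from $\eta_k=\eta+\tilde\ve_k\psi+\tilde\ve_k\delta_k\phi_k$ I would evaluate $\D\eta_k$ at a scale $\mu_k$ with $1\ll\mu_k\ll s_k$ on one hand, and relate $\D\eta_k(\mu_k)=r_k^2\D u_k(r_k\mu_k)$ to the $\beta_k$-profile on the other: for $r_k\mu_k$ in the range where Lemma \ref{2.6} applies, $\D u_k$ transitions from the $\bar\eta_k$-regime to the $\beta_k\vp$-regime, and $\D\bar\eta_k(r)\to 0$, $\D\eta(x)\to 0$ as the argument $\to\infty$. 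The upshot is $\tilde\ve_k\D\psi(\mu_k)=\D\eta_k(\mu_k)-\D\eta(\mu_k)+o(\tilde\ve_k)$, and by Lemma \ref{akbk}(iii), $\D u_k(r)\approx 16\omega_5^{-1}\cdot(\text{something})$ — more precisely $\D\bar\eta_k(\theta_{2,k})\theta_{2,k}^2\to -8$ together with $\beta_k\theta_{2,k}^2\to \tfrac13$ gives the matching constant. Carefully chasing the constants, $\D\eta_k(\mu_k)\approx 2r_k^2\beta_k\D\vp(0)/$(normalization), and comparing with $6a\tilde\ve_k$ yields $6a=\lim 12 r_k^2\beta_k/\tilde\ve_k$, i.e. \eqref{formulaa} with the factor $2$; the precise bookkeeping of the constant $2$ is a routine (if delicate) computation I would carry out using $\D\bar\eta_k(r)=-8r^2/r_k^2/(1+r^2/r_k^2)^2\cdot(\dots)$ evaluated via the explicit form of $\eta$.

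Finally, positivity $a>0$: since $\tilde\ve_k\ge r_k^2\sqrt{\beta_k}$ by \eqref{epsilonk} and (as will be shown) $\tilde\ve_k\approx r_k^2\beta_k$, the ratio $r_k^2\beta_k/\tilde\ve_k$ is bounded below away from $0$, so $a\ge 0$; and $a=0$ would, via $b=0$ and the last statement of Proposition \ref{psi-vol}, force $\psi(r)=\gamma\frac{1-r^2}{1+r^2}$, which is bounded — but then $\D\psi(r)\to 0$, giving $r_k^2\beta_k=o(\tilde\ve_k)$, contradicting $\tilde\ve_k\le\max\{|\D\eta(0)-\D\eta_k(0)|,|\D^2\eta(0)-\D^2\eta_k(0)|,r_k^2\sqrt{\beta_k}\}$ once one checks the first two quantities are themselves $O(r_k^2\beta_k)$ (this follows by differentiating \eqref{25} at the origin, as in the proof of Lemma \ref{derivative-uk}). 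Hence $a>0$.

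I expect the main obstacle to be the precise tracking of the numerical constant in \eqref{formulaa} — in particular making rigorous the ``matching'' between the inner expansion governed by $\psi$ and the outer $\beta_k\vp$-profile at an intermediate scale $\mu_k$, since this requires simultaneously controlling $\phi_k$ (via Lemma \ref{2.8}), the error $o(\tilde\ve_k)$ in Lemma \ref{vol-psik}, and the transition of $\D u_k$ through the region described by Lemma \ref{2.6}. The sign $a>0$ and the form $\psi(r)=ar^2+O(\log r)$ are comparatively soft, relying only on Proposition \ref{psi-vol} and the definition of $\tilde\ve_k$.
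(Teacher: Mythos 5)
Your overall strategy is the same as the paper's (rule out the $r^4$-term, then determine $a$ by matching $\tilde\ve_k\Delta\psi$ against the transition of $\Delta u_k$ from the inner $\bar\eta_k$-regime to the outer $\beta_k\varphi$-regime, then exclude $a=0$ via the classification of bounded solutions), but two steps have genuine gaps. First, a range-of-validity problem: the pointwise control $\phi_k=O(1+r^4)$ from Lemma \ref{2.8} holds only on $B_{s_k}$ with $s_k=o(\tilde\ve_k^{-1/4})$, and this does \emph{not} reach the scales where the matching must be performed --- e.g. $\theta_{4,k}/r_k\approx \beta_k^{-1/4}r_k^{-1}\gg \tilde\ve_k^{-1/4}$ since $\tilde\ve_k\ge r_k^2\sqrt{\beta_k}$. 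On the intermediate annuli where Lemma \ref{2.6} does apply, \emph{both} $\Delta^2 u_k$ and $\Delta^2\bar\eta_k$ equal $(32+o(1))|x|^{-4}$, so their difference is only $o(|x|^{-4})$ with an unquantified $o(1)$, and the claim that a term $384b\,\tilde\ve_k$ (note: $\Delta^2(br^4)=384b$ in $\R^6$, not $48b$) ``cannot be absorbed'' does not close. The paper instead derives from Lemma \ref{vol-psik} the long-range integrated estimate $|\Delta^2\psi_k(r)-\Delta^2\psi(r)|=o(1)$ uniformly on $(1,\tfrac{1}{10r_k})$ and evaluates exactly at $r=\theta_{4,k}/r_k$ (where $\Delta^2u_k$ vanishes) to kill $b$, and at $r=\theta_{2,k}/r_k$ (where $\Delta u_k$ vanishes) to identify $a$; you would need this mechanism, not Lemma \ref{2.8}.

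Second, your exclusion of $a=0$ is circular: it hinges on showing $|\Delta\eta_k(0)-\Delta\eta(0)|+|\Delta^2\eta_k(0)-\Delta^2\eta(0)|=O(r_k^2\beta_k)$, but these are precisely the quantities whose size defines $\tilde\ve_k$, so this bound is equivalent to $\tilde\ve_k=O(r_k^2\beta_k)$ --- i.e.\ to the conclusion --- and it does not follow from differentiating \eqref{25} at the origin. The paper's route is different: if $a=0$ then $\psi(0)=0$ and Proposition \ref{psi-vol} force $\psi\equiv 0$, whence Remark \ref{rem1} gives the exact identity $\tilde\ve_k=r_k^2\sqrt{\beta_k}$; plugging $r=\theta_{2,k}/r_k$ into the expansion of $\Delta\psi_k-\Delta\psi$ then yields $(24+o(1))\sqrt{\beta_k}=O(1)$, a contradiction. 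Finally, note the constant slips: in $\R^6$ one has $\Delta(ar^2)=12a$, not $6a$, so $\Delta\psi(\infty)=12a$ and the matching reads $12a=24\lim \beta_kr_k^2/\tilde\ve_k$; your two errors happen to cancel to give the correct \eqref{formulaa}, but as written the bookkeeping is not right.
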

\begin{proof}
We proceed by steps. 

\medskip
\noindent\textbf{Step 1} $\psi(r)=ar^2+O(\log r)$ at infinity. 

We assume by contradiction that $\psi(r)=ar^2+br^4+O(\log r)$ at infinity for some $b\neq 0$. 
From Lemma \ref{vol-psik} we get
$$\int_{B_t}\left(\D^3\psi_k(x)-\D^3\psi(x)\right)dx=o(1),\quad t\in (0,\tfrac{1}{10r_k}),$$
hence, also using that $\psi_k\to\psi$ in $C^5_{\loc}(\R^6)$, we infer
\begin{align}
  |\D^2\psi_k(r)-\D^2\psi(r)|&\leq  |\D^2\psi_k(1)-\D^2\psi(1)|+\int_1^r\frac{1}{\omega_5t^5}\left|\int_{B_t}\left(\D^3\psi_k(x)-\D^3\psi(x)\right)dx\right|dt\notag\\
  &=o(1),\quad \text{uniformly for }r\in (1,\tfrac{1}{10r_k}).\label{clm2} 
   \end{align}
Since $\frac{\theta_{4,k}}{r_k}\to\infty$, from  \eqref{psiasymbis} we get $\Delta^2\psi(\theta_{4,k})=384b+o(1)$. 
 Taking $r=\frac{\theta_{4,k}}{r_k}$ in \eqref{clm2} and recalling that $\D^2 u_k(\theta_{4,k})=0$ 
\begin{align}
 o(1)&=\frac{1}{\tilde\ve_k}\left(\D^2\eta_k(\tfrac{\theta_{4,k}}{r_k})-\D^2\eta(\tfrac{\theta_{4,k}}{r_k})\right)-\D^2\psi(\tfrac{\theta_{4,k}}{r_k})\notag\\
 &=-(32+o(1))\frac{r_k^4}{\tilde\ve_k\theta_{4,k}^4}-(384b+o(1)). \notag
\end{align}
Recalling that $\theta_{4,k}^{-4}\approx \beta_k$ and $r_k^2\beta_k\to0$, this implies
$$\tilde\ve_k\approx \frac{r_k^4}{\theta_{4,k}^4}\approx r_k^4\beta_k, $$ a contradiction to $\tilde\ve_k\geq r_k^2\sqrt{\beta_k}$. 

\medskip
\noindent\textbf{Step 2} For any $L>0$
\begin{align} \label{13}\D \psi_k(r)-\D \psi(r)=(-12+o(1))\frac{\beta_k r_k^4}{\tilde\ve_k}(r^2-L^2) +O(L^{-2})+o(1),\quad r\in (L,\tfrac{1}{10r_k}).
\end{align}

Since $\psi(r)=ar^2+O(\log r)$,  { from Proposition \ref{psi-vol}} we have  $$|\D^2\psi(x)|\leq \frac{C}{1+|x|^4},\quad x\in\R^6.$$
From \eqref{ukbetak}
$$\D^2\psi_k(\tfrac{1}{10r_k})= -(384+o(1))\frac{\beta_k r_k^4}{\tilde\ve_k},$$
hence, for $r\in (0,\frac{1}{10r_k})$
\begin{align*}
\D^2\psi_k(r)&=\D^2\psi_k(\tfrac{1}{10r_k})-\int_{r}^{\frac{1}{10r_k}}\frac{1}{\omega_5t^5}\int_{B_t}\D^3\psi_kdxdt\notag\\
 &=(-384+o(1))\frac{\beta_k r_k^4}{\tilde\ve_k}+O(\tfrac{1}{r^4}),\notag\\
\end{align*}
where in the second equality we have used that 
 $$\left|\int_{B_r}\D^3\psi_k dx\right|\leq C,\quad r\in(0,\frac{1}{10r_k}),$$ which is a consequence of Lemma \ref{vol-psik}. 
 Therefore,  for any $L>0$ and $r\in (L,\frac{1}{10r_k})$ 
\begin{align}
 &\D \psi_k(r)-\D \psi(r)\notag\\
 &=\D \psi_k(L)-\D\psi(L)+\int_{L}^{r}\frac{1}{\omega_5t^5}\int_{B_t}(\D^2\psi_k-\D^2\psi)dxdt\notag\\
 &=o(1)+\int_{L}^{r}\frac{1}{\omega_5t^5}\int_{B_t}\left((-384+o(1))\frac{\beta_k r_k^4}{\tilde\ve_k}+O(\frac{1}{|x|^4}) +O(\frac{1}{1+|x|^4}) \right)dxdt\notag\\
 &=(-12+o(1))\frac{\beta_k r_k^4}{\tilde\ve_k}(r^2-L^2) +O(L^{-2})+o(1).  \notag
\end{align} 

\medskip 

\noindent\textbf{Step 3} $a\neq 0$.  

We assume by contradiction that $a=0$. Then $\psi $ is of the form $\psi(r)=c_0\frac{1-r^2}{1+r^2}$ for some $c_0\in\R$, thanks to Proposition \ref{psi-vol}. Since $\psi(0)=0$, we must have $c_0=0$, that is, $\psi\equiv 0$. Therefore, by Remark \ref{rem1}   we have $\tilde\ve_k=r_k^2\sqrt{\beta_k}$.

Taking $r=\frac{\theta_{2,k}}{r_k}$ in \eqref{13} and using that $\D u_k(\theta_{2,k})=0$, $\theta_{2,k}^2\beta_k\to \frac13$, we obtain 
\begin{align*}
&\frac{1}{\tilde \ve_k}(8+o(1))\frac{r_k^2}{\theta_{2,k}^2}=(-12+o(1))\frac{\beta_k r_k^4}{\tilde \ve_k}\frac{\theta_{2,k}^2}{r_k^2}+O(R^{-2})+o(1) \\
 i.e., \quad  &(24+o(1))\sqrt{\beta_k}=O(1),
\end{align*}
a contradiction. 

\medskip 

\noindent\textbf{Step 4}  $a>0$.

Since $a\neq 0$, we can choose $L>0$ large such that $|O(L^{-2})|\leq \frac12|\D\psi(\infty)|=6|a|$.
Taking $r=\frac{\theta_{2,k}}{r_k}$ in \eqref{13} and using that $\theta_{2,k}^2\beta_k\to \frac13$, one gets
\begin{align*}
\D \psi(\infty)+O(L^{-2}) +o(1)&= \frac{1}{\ve_k}\left((12+o(1))\beta_kr_k^2\theta_{2_k}^2+(8+o(1))\frac{r_k^2}{\theta_{2,k}^2}\right)\\
&= \frac{1}{\tilde \ve_k}(24+o(1))\beta_kr_k^2.
\end{align*}
This shows that 
 $$0\neq 12a=\D\psi(\infty)=24\lim_{k\to\infty}\frac{\beta_kr_k^2}{\tilde \ve_k}\geq0.$$
We conclude the lemma.
\end{proof}

\paragraph{Proof of \eqref{etakexp}-\eqref{stimapsi} (completed).}
Thanks to \eqref{formulaa} we have
\begin{equation}\label{vektilde}
\ve_k=(\tfrac{a}{8}+o(1))\tilde\ve_k,
\end{equation}
hence, also using Proposition \ref{psi-vol},
$$\ve_k^{-1}(\eta_k-\eta) \to \psi_0(x)=8|x|-48\log|x|+o(\log|x|), \quad \text{as }|x|\to\infty.$$
Then we have
$$\eta_k=\eta_0+\tilde\ve_k \psi+o(\tilde \ve_k)\phi_k=\eta_k+\ve_k\psi_0+o(\ve_k)\psi_0+o(\ve_k)\phi_k.$$
and by Lemmas \ref{2.8} and \ref{psi-nontrivial} we have $\psi_0(x)+\phi_k(x)=O(1+|x|^4)$ on $B_{s_k}$ for a given sequence $(s_k)$ with $s_k=o(\ve_k^{-\frac{1}{4}})$.
\hfill$\square$

\section{Proof of Theorem \ref{thm-curv}}



By Lemmas \ref{vol-psik} and \ref{psi-nontrivial} and also using \eqref{vektilde} and
\begin{equation}\label{defint}
\int_0^r \frac{s^5}{(1+s^2)^6}dr=\frac{1}{60}\bra{1-\frac{10r^4+5r^5+1}{(1+r^2)^5}},
\end{equation}
for $\ve_k=\frac{u_k(0)} {e^{2u_k(0)}}$ we now have
\begin{align*}
\int_{B_r}\tilde V_k e^{6\eta_k}dx&=120\int_{B_r}e^{6\eta}dx+\ve_k 720 \int_{B_r}\psi_0 e^{6\eta}dx+o(\ve_k)\\
&=I_{1,r}+\ve_k I_{2,r}+o(\ve_k), \quad \text{for }r\in (0,\tfrac{1}{10r_k}).
\end{align*}
Using \eqref{defint} we obtain
$$I_{1,r}=\Lambda_1\bra{1-\frac{10}{r^6}+o(r^{-6})}.$$
From Proposition \ref{psi-vol} we get
$$I_{2,r}=720\int_{\R^6}\psi_0 e^{6\eta}dx+o(1)=24\Lambda_1+o(1),\quad o(1)\xrightarrow{r\to\infty} 0.$$
Since for $r\ge r_k^{-\frac13}$ we have $r^{-6}\le r_k^2=4e^{-2u_k(0)}=o(\ve_k)$, we obtain
$$\int_{B_r}\tilde V_k e^{6\eta_k}dx=\Lambda_1 +24\Lambda_1 \ve_k +o(\ve_k)\quad r\in \bra{\tfrac{1}{\sqrt[3]{r_k}}, \tfrac{1}{10r_k}},$$
and scaling back we obtain
$$\int_{B_r}V_k e^{6u_k}dx=\Lambda_1 +24\Lambda_1 \ve_k +o(\ve_k),\quad \text{for } r\in \bra{r_k^{\frac23}, \tfrac{1}{10}}.$$
Finally, using that for $\delta<\delta^*=\sqrt{1-\frac{1}{\sqrt3}}$ we have $(1-\delta^2)^2>\frac{1}{3}$, and 
\begin{align*}
u_k(x)&=-u_k(0)(1-|x|^2)^2(1+o(1))\\
&\le -u_k(0)(1-\delta^2)^2+o(u_k(0)), \quad \text{for }|x|\in \bra{\tfrac{1}{10},\delta}
\end{align*}
we infer
$$\int_{B_{\delta}\setminus B_{\frac{1}{10}}}V_k e^{6u_k}dx =o(\ve_k),$$
and \eqref{curvsharp}  follows at once.
\hfill $\square$



\section{Proof of Proposition \ref{psi-vol}}\label{proofpsi}

We prove the proposition in few steps. 

\medskip \noindent\textbf{Step 1} We claim that $\psi(x)=O(|x|^4)$ as $|x|\to\infty$.

Choose $r_0> 1$ such that
$$\int_{|x|>r_0}e^{6\eta}(1+|x|^4)dx<\ve,$$
where $\ve>0$ will be fixed latter.
We set
$$X:=\left\{\phi\in C^0([r_0,\infty)): \|\phi\|<\infty\right\},\quad \|\phi\|:=\sup_{[r_0,\infty)}\frac{|\phi(x)|}{(1+|x|^4)}.$$
Let $T:X\to X$, $T(\phi):=\bar{\phi}$ where $\bar\phi$ is the unique solution to the ODE
$$\D^3\bar{\phi}=-720e^{6\eta}\phi, \quad \bar{\phi}^j(r_0)=\psi^j(r_0),\,j=0,1,...,5.$$ 
Notice that for $f\in C^2_{rad}$ one has
\begin{align}\label{int-repre}
 f(r_1)=f(r_0)+r_0^5f'(r_0)\int_{r_0}^{r_1}\frac{dr}{r^5}+\int_{r_0}^{r_1}\frac{1}{\omega_5r^5}\int_{r_0<|x|<r}\D fdxdr,\quad 0<r_0<r_1. 
\end{align}
A repeated use of \eqref{int-repre} with $f=\D^2\bar\phi$, $\D\bar\phi$ and $\bar\phi$ gives 
 $$|\bar{\phi}(t)|\leq C_1(1+t^4)+C_2\ve\|\phi\|t^4,\quad t\geq r_0,$$ where $C_1=C_1(r_0)$ depends on the initial conditions $\psi^j(r_0)$ and $C_2$ is a dimensional constant. 
Therefore, for $C_2\ve<\frac12$ and $M>2C_1$ we have
\begin{align}
\|\bar{\phi}\|=\sup_{[r_0,\infty)}\frac{|\bar{\phi}(x)|}{(1+|x|^4)}\leq C_5(r_0)+C_6\ve\|\phi\|\leq M,\quad \text{for } \phi\in\mathcal{B}_M,\notag
\end{align}
where $\mathcal{B}_M:=\{\phi\in X:\|\phi\|\le M\}$. Thus, $T:\mathcal{B}_M\to \mathcal{B}_M$ and by the Schuder fixed point theorem, $T$ has a fixed point $\psi_*\in \mathcal{B}_M$. 
From the uniqueness of  solutions we have $\psi_*=\psi|_{[r_0,\infty)}\in \mathcal{B}_M$, and this proves the claim. 

\medskip\noindent\textbf{Step 2} 
We claim that $\psi(x)=P(x)-\alpha\log|x|+o(\log|x|)$ for some $\alpha\in\R$, where $P$ is a radial polynomial of degree at most  $4$.

We set
$$\bar\psi(x):=\frac{720}{\gamma_6}\int_{\R^6}\log\left(\frac{1}{|x-y|}\right)e^{6\eta(y)}\psi(y)dy,$$
which is well-defined thanks to Step 1, and $P:=\psi-\bar\psi$. Then $(-\D)^3P=0$ on $\R^6$ and since $P$ is radially symmetric, $P$ is a polynomial of degree at most $4$, which we write as $P(r)=ar^2+br^4+d$.

The property $\bar\psi=\alpha\log|x|+o(\log|x|)$ with $|\nabla^jo(\log|x|)|=o(|x|^{-j})$ for $1\le j\le 5$ follows easily from its integral definition.


\medskip\noindent\textbf{Step 3} $\alpha= 6a+48b$. 

Since the function $\eta_\lambda(x):=\eta(\lambda x)+\log\lambda$ solves $(-\Delta)^3\eta_\lambda=120e^{6\eta_\lambda}$ in $\R^6$, one easily sees that the function
\begin{align}\label{Psi}
\Psi(x):=\frac{1-|x|^2}{1+|x|^2}=\frac{\partial \eta_\lambda(x)}{\partial\lambda}\bigg|_{\lambda=1}
\end{align}
satisfies 
$$(-\D)^3\Psi=720\Psi e^{6\eta}\quad\text{in }\R^6.$$
This shows that  $\psi\D^3\Psi=\Psi\D^3\psi$ on $\R^6$. Then with a repeated integration by parts one obtains for every $r>0$
\begin{align}\label{24}
0&=\int_{B_r}\psi\D^3\Psi dx-\int_{B_r}\Psi\D^3\psi dx\notag\\
&=\int_{\partial B_r}\left(\psi (\D^2\Psi)'-\psi'\D^2\Psi +\D\psi (\D\Psi)'-(\D\psi)'\D\Psi+\D^2\psi (\Psi)'-(\D^2\psi)'\Psi \right)d\sigma.
\end{align}
From the previous step and \eqref{Psi} it follows that 
\begin{align*}
&\psi(r)=ar^2+br^4+(-\alpha+o(1))\log r,\quad \Psi(r)=-1+\frac{2+o(1)}{r^2},\\
&\psi'(r)= 2ar+4br^3+\frac{-\alpha+o(1)}{r}, \quad \Psi'(r)= -\frac{4}{r^3}+\frac{8+o(1)}{r^5},\\
&\D\psi(r)= 12a+32br^2+\frac{-4\alpha+o(1)}{r^2}, \quad \D\Psi(r)=-\frac{8}{r^4}+\frac{24+o(1)}{r^8},\\
& (\D\psi)'(r)= 64br+\frac{8\alpha+o(1)}{r^3}, \quad (\D\Psi)'(r)= \frac{32}{r^5}-\frac{192+o(1)}{r^9},\\
&\D^2\psi(r)=384b+\frac{16\alpha+o(1)}{r^4}, \quad \D^2\Psi(r)=\frac{768+o(1)}{r^{10}},\\
& (\D^2\psi)'(r)=\frac{-64\alpha+o(1)}{r^5}, \quad  (\D^2\Psi)'(r)=-\frac{7680+o(1)}{r^{11}},
\end{align*}
where $o(1)\to0$ as $r\to\infty$.
Plugging these estimates in \eqref{24} one obtains $\alpha=6a+48b$. 
 
 \medskip
 
 \noindent\textbf{Step 4} We prove that $\psi(r)=\gamma\frac{1-r^2}{1+r^2}$ when $a=b=0$. 
 
In this case, from Step 2 we can write $\psi=\bar\psi+d$. 
Indeed, by Step 3 $\alpha=0$, so that 
$$\bar\psi(x)
=\frac{720}{\gamma_6}\int_{\R^6}\log\left(\frac{1}{|x-y|}\right)e^{6\eta(y)}\psi(y)dy+\log|x|\frac{720}{\gamma_6}\underbrace{\int_{\R^6}e^{6\eta(y)}\psi(y)dy}_{=0},\quad x\ne 0$$
and we can write
$$\psi(x)=I_1(x)+I_2(x)+C,\quad x\neq 0,$$
where for $i=1,2$
$$I_i(x):=\frac{720}{\gamma_6}\int_{A_i}\log\left(\frac{|x|}{|x-y|}\right)e^{6\eta(y)}\psi(y)dy,\quad A_1:=B_1(x),\, A_2:=A_1^c.$$
For $|x|\geq 2$ we bound 
\begin{align*}
|I_1(x)|\leq C\|e^{6\eta}\psi\|_{L^\infty(A_1)}\int_{A_1}(\log|x|+|\log|x-y||)dy\leq C,
\end{align*}
and using that $$\frac{1}{1+|y|}\leq \frac{|x|}{|x-y|}\leq1+|y|\quad\text{for every }y\in A_2, \;|x|\ge 2,$$
one gets $$|I_2(x)|\leq C\int_{A_2}\log(1+|y|)e^{6\eta(y)}|\psi(y)|dy\leq C.$$
Thus, $\psi$ is bounded in $\R^6$.
    
Bounded solutions of \eqref{eqpsi} have been classified in \cite[Theorem 2.6]{Wei-Xu}, hence the proof of Step 4 is complete.

\section{Some examples}\label{examples}

 It is easy to verify that the cases from $i)$ to $iii)$ of Theorem \ref{thm-2} can actually occur. We will show a few examples.

\medskip\noindent\textbf{Example 1} Let $u$ be a solution to
\begin{equation}\label{liouentire}
(-\Delta)^3 u= 120e^{6u}\quad\text{on }\R^6,\quad \Lambda:=\int_{\R^6}120 e^{6u}dx<\infty.
\end{equation}
Such solutions exist for every $\Lambda>0$, thanks to \cite{CC,HD,H-volume,LM-vol}. Take
$u_k(x):=u(\tfrac{x}{k})-\log k$, which is also a solution to \eqref{liouentire}. Then this sequence $(u_k)$ is as in case $i)$ of Theorem \ref{thm-2}, with $u_k\to-\infty$ uniformly in $\R^6$.

If we set $u_k(x):=u(kx)+\log k$, then we are in case $ii)$ of Theorem \ref{thm-2}, with $u_k\to -\infty$ uniformly locally away from $0$ and $u_k(0)\to\infty$.

\medskip

As the next example shows, things can get more complicated.
 



\medskip\noindent\textbf{Example 2} Another example of case $ii)$ of Theorem \ref{thm-2} is as follows. Let $(u_k)$ be as in Theorem \ref{thm-1} for some given $\Lambda\ge \Lambda_1$. Then we can choose $\rho_k\to \infty$ slowly enough, such that for $v_k(x):=u_k(\rho_kx)+\log\rho_k$ there exists radii $0<s_k<t_k \to 0$ with $v_k(t_k)\to\infty$, $v_k(s_k)\to-\infty$ and $v_k(0)\to\infty$.

 
 \medskip\noindent\textbf{Example 3} One can also construct an example of case $ii)$ of Theorem \ref{thm-2} in which $u_k(0)\to-\infty$ and $u_k(\rho_k)\to\infty$ for some $0< \rho_k \to 0$.  
 Fix $\Lambda>0$. Take $u_k(x)=v_k(\beta_kx)+\log \beta_k$ where  $v_k$ is a radial solution to 
 \begin{align*}
 v_k(x)&=\frac{120}{\gamma_6}\int_{\R^6}\log\left(\frac{1}{|x-y|}\right)e^{6v_k(y)}dy-\left(k+\frac{|\D v_k(0)|}{24}\right)(1-|x|^2)^2+c_k\\
 &=:I_k(x)-\left(k+\frac{|\D v_k(0)|}{24}\right)(1-|x|^2)^2+c_k,
 \end{align*}
with $$120\int_{\R^6}e^{6v_k}=\Lambda,$$ and $\beta_k=k+\frac{|\D v_k(0)|}{24}$.  
Existence of such $v_k$ can be proven in the spirit of \cite{H-volume}. In fact, one can show that $I_k=O(1)$ in $B_\delta$ for some $\delta>0$, $v_k(1)\to\infty$, $c_k\to\infty$, $c_k\ll\beta_k$ (see also \cite{HIM}). 
This example can be slightly modified to have $u_k(0)=0$ and $u_k(\rho_k)\to\infty$.
 
 



\section{Open questions}\label{OQ}

It is natural to ask what happens in the non-radial case, already in dimension $4$. In the very related case of the mean-field equation
\begin{equation}\label{MF}
(-\Delta)^m u_k =\rho_k\frac{e^{2m u_k}}{\int_{\Omega}e^{2mu_k}},\quad \text{in }\Omega
\end{equation}
with Dirichlet boundary conditions and the bound $\rho_k\le C$, using the Lyapunov-Schmidt reduction, several results have been produced, both in dimension $2$ (see e.g. \cite{BP, dPKM, EGP}), $4$ (see \cite{BDOP, CMM}) or higher (see \cite{Mor}). In this case one can construct solutions blowing up at finitely many points, which are located at a critical point of a so-called \emph{reduced} functional (compare to \cite{Rey}). The absence of polyharmonic blow-up for \eqref{MF} (contrary to case of Theorems \ref{ARSM}, \ref{thm-2} and \ref{thm-1}  is due to the Dirichlet boundary condition. In fact these existence results are the most general possible, see e.g. \cite{MP,RW}. On the other hand, in view of Theorems \ref{ARSM} and \ref{thm-1} we expect for \eqref{eq-1}-\eqref{eq-2} a large number of examples where both concentration and polyharmonic blow-ups occur.

\paragraph{General open question} For $n\ge 2$ take $\Omega\subset\R^{2n}$ open, a finite set $S_1\subset\Omega$, and $\varphi\in \mathcal{K}(\Omega,S_1)$. When is it possible to construct solutions to \eqref{eq-1}-\eqref{eq-2} having as blow up set exactly $S_1\cup S_\varphi$?

\medskip

More precisely, we can consider the following subquestion.

\medskip
\noindent\textbf{Open question 1} Is it necessary that the points in $S_1$ satisfy some balancing conditions, coincide with critical points of $\varphi$, or can they be prescribed arbitrarily?

\medskip
\noindent\textbf{Open question 2} If $S_\varphi\ne \emptyset$, should every blow up in $S_1$ be spherical?

\medskip
\noindent\textbf{Open question 3} Consider the following simple situation. Take
$$\varphi(x)=x_1-\frac{x_1^3}{3}-\frac{2}{3},\quad x=(x_1,x_2,x_3,x_4)\in \Omega:=(-2,2)\times\R^3\subset\R^4.$$
Then $\varphi\in \mathcal{K}(\Omega,\emptyset)$, $S_\varphi=\{1\}\times \R^3$ and $\nabla\varphi=0$ on $\{\pm1\}\times\R^3$. Is it possible for every finite set $A\subset\R^3$ to find solutions to \eqref{eq-1}-\eqref{eq-2} with $n=2$ and with polyharmonic blow-up on $S_\varphi$ and concentration blow-up at $S_1=\{-1\}\times A$?




\end{document}